\journal{TBD}
\newcommand{\calA}{{\mathcal{A}}}
\newcommand{\calB}{{\mathcal{B}}}
\newcommand{\calC}{{\mathcal{C}}}
\newcommand{\calF}{{\mathcal{F}}}
\newcommand{\calL}{{\mathcal{L}}}
\newcommand{\calM}{{\mathcal{M}}}
\newcommand{\calN}{{\mathcal{N}}}
\newcommand{\calP}{{\mathcal{P}}}
\newcommand{\calR}{{\mathcal{R}}}
\newcommand{\calS}{{\mathcal{S}}}
\newcommand{\calT}{{\mathcal{T}}}
\newcommand{\N}{{\mathbb{N}}} 
\newcommand{\R}{{\mathbb{R}}} 
\newtheorem{theorem}{Theorem} 
\newtheorem{lemma}[theorem]{Lemma}
\newtheorem{corollary}[theorem]{Corollary}
\newdefinition{definition}{Definition}
\newproof{proof}{Proof}
\begin{document}

\begin{frontmatter}

\title{Properties of functions on a bounded charge space}

\author{Jonathan M. Keith}
\address{School of Mathematics, Monash University, Wellington Rd, Clayton VIC 3800, Australia}

\begin{abstract}
A {\em charge space} $(X,\calA,\mu)$ is a generalisation of a measure space, consisting of a sample space $X$, a field of subsets $\calA$ and a finitely additive measure $\mu$, also known as a {\em charge}. Key properties a real-valued function on $X$ may possess include $T_1$-measurability and integrability. These properties are generalisations of corresponding properties of real-valued functions on a (countably additive) measure space. However, these properties are less well studied than their measure-theoretic counterparts.

This paper describes new characterisations of $T_1$-measurability and integrability in the case that the charge space is bounded, that is, $\mu(X) < \infty$. These characterisations are convenient for analytic purposes; for example, they facilitate simple proofs that $T_1$-measurability is equivalent to conventional measurability and integrability is equivalent to Lebesgue integrability, if $(X,\calA,\mu)$ is a complete measure space.

Several additional contributions to the theory of bounded charges are also presented. New characterisations of equality almost everywhere of two real-valued functions on a bounded charge space are provided. Necessary and sufficient conditions for the function space $L_1(X,\calA,\mu)$ to be a Banach space are determined. Lastly, the concept of completion of a measure space is generalised for charge spaces, and it is shown that under certain conditions, completion of a charge space adds no new equivalence classes to the quotient space $\calL_p(X,\calA,\mu)$.
\end{abstract}

\begin{keyword}
finitely additive measure \sep $T_1$-measurability \sep integrability \sep $L_p$ space \sep Banach space \sep complete measure space \sep Peano-Jordan completion 
\end{keyword}

\end{frontmatter}


\section{Introduction} \label{intro}

A {\em field of sets} $\calA$, also known as a {\em field of subsets} or an {\em algebra of sets}, is a subset of the power set $\calP(X)$ of some {\em sample space} $X$ such that:
\begin{enumerate}
\item $\emptyset \in \calA$,
\item $A^c \in \calA$ for all $A \in \calA$, and
\item $A \cup B \in \calA$ for all $A, B \in \calA$. 
\end{enumerate}
A $charge$ $\mu$ defined on $\calA$ is a real-valued function such that:
\begin{enumerate}
\item $\mu(\emptyset) = 0$, and
\item $\mu(A \cup B) = \mu(A) + \mu(B)$ for all disjoint $A, B \in \calA$.
\end{enumerate}
A charge that takes only non-negative values is sometimes called a {\em content} or {\em finitely additive measure}. However, in this paper the term {\em charge} is preferred, and may be assumed to refer to a non-negative valued charge unless otherwise stated. A triple $(X,\calA,\mu)$ of such objects is called a {\em charge space}. 

There is a little used but well developed theory of integration for real-valued functions on charge spaces. This theory actually predates the Lebesgue integral, beginning with the finitely additive measure theory developed by Giuseppe Peano (1858-1932) and independently by Camille Jordan (1838-1922) (see \cite{greco2010peano} for a nice historical account). The Lebesgue integral based on countably additive measures has proved to be more convenient and popular, but integration based on charges remains substantially more general. A brief review of this theory is provided in Section~\ref{theory_of_charges}, including definitions of the function spaces $L_p(X,\calA,\mu)$, comprised of integrable functions on a charge space. 

One reason finitely additive measure and integration theory have not become as popular as their countably additive counterparts is that definitions of the key properties of $T_1$-measurability and integrability are a little more complicated than corresponding properties in the countably additive context, and initially seem less convenient for analysis. In particular, conventionally measurable functions are pointwise approximable by a non-decreasing sequence of simple functions. It is not immediately clear whether this useful property holds in general for $T_1$-measurable functions. 

A goal of this paper is to facilitate analysis of functions on charge spaces, by providing more convenient characterisations of $T_1$-measurability and integrability. In Section~\ref{characterisations}, two new characterisations are developed for each of these properties, for the special case of a bounded charge space, that is, one for which $\mu(X) < \infty$. First, $T_1$-measurability is characterised in terms of inverse images of rays, using a criterion resembling a frequently used characterisation of conventional measurability, and an additional smoothness condition on the tails of the function. A characterisation of integrability is obtained by strengthening this smoothness condition. Second, $T_1$-measurability can be characterised in terms of pointwise approximation by an increasing sequence of simple functions, and integrability can be characterised by imposing an additional condition on this sequence. Section~\ref{characterisations} also contains two useful new characterisations of equality almost everywhere of functions on a charge space. 

To demonstrate the utility of the new characterisations, a number of simple corollaries are also proved in Section~\ref{characterisations}. In particular, it is shown that the $L_p$ spaces defined in Section~\ref{theory_of_charges} are equivalent to conventional Lebesgue function spaces for charge spaces that are also complete measure spaces. 

Another factor inhibiting the analysis of functions on charge spaces is that $L_p$ spaces are not necessarily complete in this context, unlike Lebesgue function spaces. To partially address this deficiency, Section~\ref{completeness_section} identifies necessary and sufficient conditions for $L_p$ spaces to be Banach spaces. 

The concept of a complete measure space is generalised for charges in Section~\ref{complete_fields}. This section also introduces a construction that is here called {\em null modification} (see Lemma~\ref{nullmodification_X} and proof). Null modification is used to show that, under certain condtions, expanding a field by adding additional null sets leaves the corresponding $\calL_p$ spaces unchanged up to isomorphism.


\section{Integration theory for charges}
\label{theory_of_charges}

As some readers may be unfamiliar with integration with respect to a charge, this section reviews the relevant theory based on the concise summary given in~\cite{basile2000} and the more comprehensive treatment in~\cite{bhaskararao1983}. Theorems and lemmas in this section that are stated without proof are proved in the references, but a few simple proofs are provided for results that are known but not explicitly proved in the literature. For simplicity, all charges and charge spaces mentioned in this paper are assumed to be non-negative and bounded, unless otherwise stated. 


\begin{definition}
A charge $\mu$ on a field of subsets $\calA$ of a set $X$ can be extended to an {\em outer charge} on $\calP(X)$ as follows
\[
\mu^*(A) := \inf\{ \mu(B): B \in \calA, A \subseteq B \}
\]
for all $A \in \calP(X)$. 
\end{definition}

Outer charges are sub-additive, that is, $\mu^*(A \cup B) \leq \mu^*(A) + \mu^*(B)$ for all $A,B \subseteq X$ (Proposition 4.1.4 of \cite{bhaskararao1983}).

Note the outer charge depends on the field of sets $\calA$. It is therefore sometimes useful to write $\mu^*_{\calA}$ to avoid ambiguity when working with nested fields. Nevertheless, the more concise notation $\mu^*$ will be used when the field of sets $\calA$ is clear from the context. 

\begin{definition} \label{null_function}
Let $(X,\calA,\mu)$ be a charge space. A {\em null set} is any $A \in \calP(X)$ for which $\mu^*(A) = 0$. A {\em null function} is a function $f : X \rightarrow \R$ such that
\[
\mu^*(\{ x \in X : | f(x) | > \epsilon \}) = 0
\]
for all $\epsilon > 0$.
\end{definition}

The following notion of equivalence between functions plays an important role throughout this paper.

\begin{definition} \label{equal_ae}
Let $(X,\calA,\mu)$ be a charge space. Two functions $f,g: X \rightarrow \R$ are said to be {\em equal almost everywhere} (abbreviated as $f = g$ a.e.) if $f - g$ is a null function.
\end{definition}

This terminology can be somewhat misleading. For example, consider the charge space $(\N, \calA, \nu)$, where $\N$ denotes the natural numbers (excluding zero), $\calA \subset \calP(X)$ is the field of sets comprised of the finite subsets of $\N$ and their complements (cofinite sets), and $\nu$ is the charge obtained by setting $\nu(A) = 0$ if $A \subset \N$ is finite and $\nu(A) = 1$ if $A$ is cofinite. The functions $f(n) = 1/n$ and $g(n) = 0$ are equal a.e., since $\{ x \in \N : | f(x) - g(x) | > \epsilon \}$ is finite, and therefore has zero charge for any $\epsilon$, even though $f(n) \neq g(n)$ for any $n \in \N$. 

A related definition is the following.

\begin{definition} \label{dominated_ae}
Let $(X,\calA,\mu)$ be a charge space. A function $f: X \rightarrow \R$ is said to be {\it dominated almost everywhere} by $g: X \rightarrow \R$ (abbreviated as $f \leq g$ a.e.)  if $f \leq g + h$, where $h$ is a null function. 
\end{definition}

If $\calA$ is a $\sigma$-field and $\mu$ a measure, then $f : X \rightarrow \R$ is a null function if and only if $\mu^*(\{ x : f(x) \neq 0 \}) = 0$ (Proposition 4.2.7 of \cite{bhaskararao1983}). In that case, the use of the term ``almost everywhere'' in the preceding two definitions corresponds to the conventional sense of this expression in measure theory. 

Whether a function $f$ is a null function, and whether $f$ is equal to or dominated almost everywhere by another function $g$, depends on the field $\calA$ via the outer charge $\mu^*_{\calA}$. Thus where there is potential ambiguity regarding the field, it is helpful to say that $f$ is a null function {\em with respect to} $\calA$, or that $f = g$ a.e. or $f \leq g$ a.e. {\em with respect to} $\calA$. Similar terminology should be used for all definitions that depend on an outer charge.

The following mode of convergence generalises convergence in probability and plays an important role in the definition of integrals over a charge space.

\begin{definition}
The mode of convergence $f_n \xrightarrow{h} f$ is read as ``$f_n$ converges to $f$ {\em hazily}'' and means that for every $\epsilon > 0$,
\[
\mu^*(\{ x: | f_n(x) - f(x) | > \epsilon \}) \rightarrow 0
\]
as $n \rightarrow \infty$.
\end{definition}

The limit of a sequence that converges hazily is not in general unique, but nevertheless the following result, adapted from Proposition~4.3.2 of \cite{bhaskararao1983}, holds.

\begin{theorem} \label{hazy_uniqueness}
Let $(X,\calA,\mu)$ be a charge space and let $f$ and $g$ be real-valued functions on $X$. If a sequence of real-valued functions $\{ f_k \}_{k=1}^{\infty}$ on $X$ converges hazily to $f$, and $f = g$ a.e., then $\{ f_k \}_{k=1}^{\infty}$ converges to $g$ hazily. Conversely, if $\{ f_k \}_{k=1}^{\infty}$ converges to both $f$ and $g$ hazily, then $f = g$ a.e.
\end{theorem}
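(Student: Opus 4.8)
The plan is to prove both directions with the same elementary device: a triangle-inequality set inclusion combined with the monotonicity and sub-additivity of the outer charge $\mu^*$. Before starting I would record that $\mu^*$ is monotone, i.e. $A \subseteq B$ implies $\mu^*(A) \leq \mu^*(B)$; this is immediate from the definition, since every $C \in \calA$ with $B \subseteq C$ also satisfies $A \subseteq C$, so the infimum defining $\mu^*(A)$ is taken over a superset of those defining $\mu^*(B)$. Sub-additivity is already available (Proposition 4.1.4 of \cite{bhaskararao1983}), and these two facts carry the entire argument.

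For the forward direction, fix $\epsilon > 0$ and observe that whenever $|f_k(x) - g(x)| > \epsilon$, the triangle inequality forces $|f_k(x) - f(x)| > \epsilon/2$ or $|f(x) - g(x)| > \epsilon/2$. Hence
\[
\{ x : |f_k(x) - g(x)| > \epsilon \} \subseteq \{ x : |f_k(x) - f(x)| > \epsilon/2 \} \cup \{ x : |f(x) - g(x)| > \epsilon/2 \},
\]
and applying monotonicity followed by sub-additivity gives
\[
\mu^*(\{ x : |f_k(x) - g(x)| > \epsilon \}) \leq \mu^*(\{ x : |f_k(x) - f(x)| > \epsilon/2 \}) + \mu^*(\{ x : |f(x) - g(x)| > \epsilon/2 \}).
\]
The second term vanishes because $f - g$ is a null function (as $f = g$ a.e.), while the first tends to $0$ as $k \to \infty$ since $f_k \xrightarrow{h} f$. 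Thus the left-hand side tends to $0$, which is exactly the statement $f_k \xrightarrow{h} g$.

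For the converse I would run the same inclusion with the roles rearranged: for fixed $\epsilon > 0$,
\[
\{ x : |f(x) - g(x)| > \epsilon \} \subseteq \{ x : |f(x) - f_k(x)| > \epsilon/2 \} \cup \{ x : |f_k(x) - g(x)| > \epsilon/2 \},
\]
so that
\[
\mu^*(\{ x : |f(x) - g(x)| > \epsilon \}) \leq \mu^*(\{ x : |f_k(x) - f(x)| > \epsilon/2 \}) + \mu^*(\{ x : |f_k(x) - g(x)| > \epsilon/2 \})
\]
holds for every $k$. The key point is that the left-hand side is independent of $k$, whereas both terms on the right tend to $0$ as $k \to \infty$ by the two hypothesised hazy convergences. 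Letting $k \to \infty$ therefore forces $\mu^*(\{ x : |f(x) - g(x)| > \epsilon \}) = 0$, and since $\epsilon > 0$ was arbitrary, $f - g$ is a null function, i.e. $f = g$ a.e.

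I do not anticipate a genuine obstacle: this is the finitely additive analogue of the standard uniqueness-of-limit-in-measure argument, and the only points requiring any care are the one-line verification of monotonicity of $\mu^*$ and, in the converse, the observation that a quantity independent of $k$ bounded by terms that can be driven to zero must itself be zero.
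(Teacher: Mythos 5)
Your proof is correct, and it follows the standard argument: the paper itself states this theorem without proof, deferring to Proposition~4.3.2 of the cited reference, where exactly this triangle-inequality decomposition combined with monotonicity and sub-additivity of $\mu^*$ is the intended argument. Both directions are handled soundly, including the key observation in the converse that the $k$-independent quantity $\mu^*(\{x : |f(x)-g(x)| > \epsilon\})$ is squeezed to zero.
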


For two fields of subsets $\calA, \calA^{\prime}$ of a set $X$ with $\calA^{\prime} \subset \calA$ and a charge $\mu$ defined on  $\calA$, ambiguity can arise as to whether hazy convergence is with respect to $\calA$ or $\calA^{\prime}$. In what follows, the notation $f_n \xrightarrow{h,\calA} f$ is used wherever there is potential ambiguity regarding the field of sets with respect to which hazy convergence occurs. 

The following lemma confirms an expected property of dominating functions: that the limit of a dominated sequence is also dominated.

\begin{lemma} \label{limit_dominated}
Let $(X,\calA,\mu)$ be a charge space, and consider a sequence of real-valued functions $\{ f_k \}_{k=1}^{\infty}$ on $X$ and real-valued functions $f$ and $g$ on $X$ such that $f_k \xrightarrow{h} f$ and $f_k \leq g$ a.e. for all $k \in \N$. Then $f \leq g$ a.e.
\end{lemma}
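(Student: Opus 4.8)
The plan is to first reformulate the relation $f \leq g$ a.e.\ in terms of the outer charge, and then exploit subadditivity of $\mu^*$ together with a pointwise set inclusion to transfer the domination from the approximating functions $f_k$ to the limit $f$.

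First I would establish the characterisation: $f \leq g$ a.e.\ if and only if $\mu^*(\{x : f(x) > g(x) + \epsilon\}) = 0$ for every $\epsilon > 0$. For the forward direction, if $f \leq g + h$ with $h$ null, then on $\{f > g + \epsilon\}$ one has $h > \epsilon$, so this set is contained in $\{|h| > \epsilon\}$, which is null; monotonicity of $\mu^*$ (immediate from its definition as an infimum over covering sets in $\calA$) gives the claim. For the converse, the function $h := \max(f - g, 0)$ is null precisely because $\{|h| > \epsilon\} = \{f > g + \epsilon\}$, and clearly $f \leq g + h$. Applying the same characterisation to each $f_k$ turns the hypothesis $f_k \leq g$ a.e.\ into $\mu^*(\{f_k > g + \delta\}) = 0$ for all $\delta > 0$ and all $k \in \N$.

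The heart of the argument is a set inclusion. Fix $\epsilon > 0$ and any $k \in \N$. If $f(x) > g(x) + \epsilon$ but $f_k(x) \leq g(x) + \epsilon/2$, then $f(x) - f_k(x) > \epsilon/2$; hence
\[
\{f > g + \epsilon\} \subseteq \{f_k > g + \epsilon/2\} \cup \{|f - f_k| > \epsilon/2\}.
\]
Applying monotonicity and subadditivity of $\mu^*$ yields
\[
\mu^*(\{f > g + \epsilon\}) \leq \mu^*(\{f_k > g + \epsilon/2\}) + \mu^*(\{|f - f_k| > \epsilon/2\}).
\]
The first term on the right vanishes for every $k$ by the reformulated hypothesis (with $\delta = \epsilon/2$), while the second tends to $0$ as $k \to \infty$ by hazy convergence $f_k \xrightarrow{h} f$. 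Since the left-hand side does not depend on $k$, letting $k \to \infty$ forces $\mu^*(\{f > g + \epsilon\}) = 0$. As $\epsilon > 0$ was arbitrary, the characterisation from the first step gives $f \leq g$ a.e.

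I do not expect a serious obstacle here: the only point requiring care is choosing the split of $\epsilon$ correctly (using $\epsilon/2$ on both sides) so that the two events genuinely cover $\{f > g + \epsilon\}$, together with attention to strict versus non-strict inequalities. Everything else is a routine application of the subadditivity noted after the definition of the outer charge, and the observation that the left-hand bound is independent of $k$.
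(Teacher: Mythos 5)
Your proof is correct. The analytic core coincides with the paper's: both arguments hinge on covering an exceedance set by two pieces --- one null because of the hypothesis $f_k \leq g$ a.e., the other of small outer charge by hazy convergence --- followed by subadditivity of $\mu^*$ and letting $k \rightarrow \infty$, with the same $\epsilon/2$ split. Where you genuinely differ is in the bookkeeping around that estimate. The paper works at the level of functions: it takes null functions $h_k \geq 0$ with $f_k \leq g + h_k$, forms the single function $h := \inf_k \left( h_k + | f_k - f | \right)$, observes $f \leq g + h$ pointwise, and then runs the splitting argument on the sets $\{ x : h(x) > \epsilon \}$ to show $h$ is null. You instead work at the level of sets: you first prove the characterisation that $f \leq g$ a.e.\ if and only if $\mu^*(\{ x : f(x) > g(x) + \epsilon \}) = 0$ for every $\epsilon > 0$ (with the converse direction supplying the dominating null function as $\max(f-g,0)$), and then apply the splitting directly to $\{ x : f(x) > g(x) + \epsilon \}$. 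Your route avoids the infimum construction entirely, so there is nothing to verify about an infimum over infinitely many functions, and the characterisation you prove is a reusable set-level reformulation of Definition~\ref{dominated_ae} that is arguably of independent value. The paper's route stays closer to the letter of the definition and produces the dominating null function explicitly; the two arguments are otherwise the same estimate in different packaging.
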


\begin{proof}
For each $k \in \N$, there exists a null function $h_k$ such that $f_k \leq g + h_k$. Without loss of generality, suppose $h_k \geq 0$ (noting $f_k \leq g +h_k^+$, where $h_k^+ := \max \{ 0, h_k \}$). Then
\[
f \leq f_k + | f_k - f | \leq g + h_k + | f_k - f |,
\]
and thus $f \leq g + h$, where $h := \inf_k \left( h_k + | f_k - f | \right) \geq 0$. Now for any $x \in X$ and $\epsilon > 0$,
\begin{eqnarray*}
h(x) > \epsilon &\implies& h_k(x) + | f_k - f | > \epsilon, \mbox{ for all } k \in \N \\
&\implies& h_k(x) > \frac{\epsilon}{2} \mbox{ or } | f_k - f | > \frac{\epsilon}{2}, \mbox{ for all } k \in \N.
\end{eqnarray*}
Hence for each $k \in \N$,
\[
\mu^*(\{ x : h(x) > \epsilon \}) \leq \mu^*(\{ x : h_k(x) > \frac{\epsilon}{2} \}) + \mu^*(\{ x : | f_k - f | > \frac{\epsilon}{2} \}).
\]
The first term on the right hand side is zero, and the second goes to zero as $k \rightarrow \infty$, implying $\mu^*(\{ x : h(x) > \epsilon \}) = 0$ for any $\epsilon > 0$. That is, $h$ is a null function.
\qed \end{proof}

As in standard Lebesgue integration, the integral is defined with reference to simple functions.

\begin{definition}
A {\em simple function} on a charge space $(X,\calA,\mu)$ is a function $f: X \rightarrow \R$ of the form $\sum_{k=1}^K c_k I_{A_k}$, for $K \in \N$, real numbers $c_1, \ldots, c_K$ and a partition of $X$ into subsets $\{ A_1, \ldots, A_K \} \subset \calA$. (For unbounded charges, simple functions must also satisfy $\mu(A_k) < \infty$ whenever $c_k \neq 0$.)
\end{definition}

Simple functions are used to define a notion of measurability of functions on a charge space.

\begin{definition}
A function $f: X \rightarrow \R$ is said to be {\em $T_1$-measurable} on a charge space $(X,\calA,\mu)$ if there is a sequence of simple functions $f_n \xrightarrow{h} f$. It is said to be {\em $T_2$-measurable} if, given any $\epsilon > 0$, there is a partition of $X$ into a finite number of sets $\{ A_0, A_1, \ldots, A_K \} \subseteq \calA$ such that $\mu(A_0) < \epsilon$ and $| f(x) -f(y) | < \epsilon$ for every $x, y \in A_k$ and $k \in \{ 1, \ldots, K \}$.
\end{definition}

These two definitions of measurability are equivalent (Theorem~4.4.7 of~\cite{bhaskararao1983}). 

\begin{definition} \label{L0}
The set of functions of the form $f : X \rightarrow \R$ that are $T_1$-measurable with respect to a charge space $(X,\calA,\mu)$ is denoted $L_0(X,\calA,\mu)$. 

A pseudo-metric can be defined for $L_0(X,\calA,\mu)$ as follows: 
\[
d(f,g) := \inf \{\epsilon > 0 : \mu^*(\{ x : | f(x) - g(x) | > \epsilon \}) < \epsilon \}.
\]
for all $f, g \in L_0(X,\calA,\mu)$. 
\end{definition}

It is sometimes helpful to write $d_{\calA}(f,g)$ to clarify the field of sets used to define the outer charge $\mu^*_{\calA}$. It is immediate that $d_{\calA}(f,g) = 0$ if and only if $f = g$ a.e. with respect to $\calA$, and that a sequence of functions $\{ f_n \}_{n=1}^{\infty} \subseteq L_0(X,\calA,\mu)$ converges to $f \in L_0(X,\calA,\mu)$ in the pseudo-metric $d_{\calA}$ if and only if $f_n \xrightarrow{h,\calA} f$.

Define $\calL_0(X,\calA,\mu)$ to be the collection of equivalence classes of $L_0(X,\calA,\mu)$ under the equivalence relation $f \sim g \iff d(f,g) = 0$. Then the function $d([f],[g]) := d(f,g)$ is a a metric for $\calL_0(X,\calA,\mu)$, where $[f]$ and $[g]$ are the equivalence classes of $f$ and $g$ respectively.

Any $T_1$-measurable function has the following property.

\begin{definition}
A function $f: X \rightarrow \R$ is said to be {\em smooth} if, given any $\epsilon > 0$, there exists $k > 0$ such that 

\[
\mu^*(\{ x \in X : | f(x) | > k \}) < \epsilon.
\]
\end{definition}

Although $T_1$-measurability implies smoothness (Corollary~4.4.8 of~\cite{bhaskararao1983}), the converse is not true in general. 

Other key properties of $T_1$-measurable functions include the following, adapted from Corollary~4.4.9 of \cite{bhaskararao1983}.

\begin{theorem} \label{T1_properties}
Let $(X,\calA,\mu)$ be a charge space and let $f$ and $g$ be real-valued functions on $X$. Let $\psi : \R \rightarrow \R$ be a continuous function. Then 
\begin{enumerate}
\item If $f$ and $g$ are $T_1$-measurable and $c,d \in \R$, then $cf+dg$ and $fg$ are $T_1$-measurable.
\item If $f$ is $T_1$-measurable, then $\psi(f)$ is $T_1$-measurable.
\item If $\{ f_n \}_{n=1}^{\infty}$ is a sequence of $T_1$-measurable functions converging to $f$ hazily, then $f$ is $T_1$-measurable and $\{ \psi(f_n) \}_{n=1}^{\infty}$ converges to $\psi(f)$ hazily.
\end{enumerate}
\end{theorem}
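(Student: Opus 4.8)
The plan is to prove the three parts in an order that lets later parts reuse earlier ones: first linearity, then composition with a continuous $\psi$, then products as a consequence, and finally the closure and convergence statements of part~3. For the linear combination $cf+dg$, I would start from simple functions $f_n \xrightarrow{h} f$ and $g_n \xrightarrow{h} g$, observe that each $cf_n+dg_n$ is again simple, and verify hazy convergence directly. Assuming $c,d \neq 0$ (the vanishing cases being trivial), the inclusion
\[
\{ x : |cf_n(x)+dg_n(x)-cf(x)-dg(x)| > \epsilon \} \subseteq \{ x : |f_n(x)-f(x)| > \tfrac{\epsilon}{2|c|} \} \cup \{ x : |g_n(x)-g(x)| > \tfrac{\epsilon}{2|d|} \}
\]
together with subadditivity of $\mu^*$ reduces the claim to the hazy convergence of $f_n$ and $g_n$.

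For part~2, take simple $f_n \xrightarrow{h} f$ and note that $\psi \circ f_n$ is simple, since it takes the constant value $\psi(c_k)$ on each block $A_k$ of the partition defining $f_n$. The difficulty is that $\psi$ is only uniformly continuous on compact sets, so the tails of $f$ must be controlled; this is exactly where smoothness of $f$ (guaranteed by the smoothness of $T_1$-measurable functions noted above) enters. Given a target $\delta>0$, I would first use smoothness to pick $M$ with $\mu^*(\{x:|f(x)|>M\}) < \delta/2$, then use uniform continuity of $\psi$ on $[-2M,2M]$ to obtain $\eta>0$ with $|\psi(s)-\psi(t)|\le\epsilon$ whenever $s,t\in[-2M,2M]$ and $|s-t|\le\eta$. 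Off the set $\{x:|f(x)|>M\} \cup \{x:|f_n(x)-f(x)|>\min(\eta,M)\}$ one has $|f|\le M$, $|f_n|\le 2M$, and $|f_n-f|\le\eta$, so $|\psi(f_n)-\psi(f)|\le\epsilon$ there; hence
\[
\mu^*(\{ x : |\psi(f_n(x))-\psi(f(x))| > \epsilon \}) \le \mu^*(\{ x : |f(x)|>M \}) + \mu^*(\{ x : |f_n(x)-f(x)|>\min(\eta,M) \}),
\]
and the right-hand side falls below $\delta$ for large $n$. The product $fg$ then follows with no extra work: applying part~2 to $\psi(t)=t^2$ shows $f^2$, $g^2$ and $(f+g)^2$ are $T_1$-measurable, and the identity $fg=\tfrac{1}{2}[(f+g)^2-f^2-g^2]$ combined with linearity finishes it.

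For part~3, I would first prove that $T_1$-measurability is closed under hazy limits by a diagonal selection: for each $n$, $T_1$-measurability of $f_n$ lets me choose a simple $g_n$ with $\mu^*(\{x:|g_n(x)-f_n(x)|>1/n\}) < 1/n$, and then the inclusion $\{x:|g_n(x)-f(x)|>\epsilon\} \subseteq \{x:|g_n(x)-f_n(x)|>\epsilon/2\}\cup\{x:|f_n(x)-f(x)|>\epsilon/2\}$ shows $g_n \xrightarrow{h} f$, so $f$ is $T_1$-measurable. Once $f$ is known to be $T_1$-measurable it is smooth, and then $\psi(f_n)\xrightarrow{h}\psi(f)$ follows from precisely the tail-plus-uniform-continuity argument of part~2, now applied to the general (not necessarily simple) functions $f_n$, with $|f_n|$ again bounded through $f$ on the good set.

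The main obstacle throughout is the absence of any a priori bound on the functions: uniform continuity of $\psi$ and the triangle-inequality decompositions only help on bounded ranges, so the crux of every argument is the repeated use of smoothness to render the contribution of the unbounded tails negligible in $\mu^*$. Everything else reduces to routine manipulation of the set inclusions above together with finite subadditivity of the outer charge.
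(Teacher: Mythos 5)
Your proof is correct, but there is no proof in the paper to compare it against: the paper states this theorem without proof, importing it as an adaptation of Corollary~4.4.9 of \cite{bhaskararao1983}. Judged on its own merits, your argument is sound and essentially self-contained, resting only on two facts the paper does supply in Section~\ref{theory_of_charges}: subadditivity (and monotonicity) of the outer charge $\mu^*$, and the fact that $T_1$-measurable functions are smooth (Corollary~4.4.8 of \cite{bhaskararao1983}); since that smoothness result comes from the $T_1$/$T_2$ equivalence rather than from the present theorem, invoking it is not circular. The individual steps all check out: the triangle-inequality inclusion handling $cf+dg$; the observation that $\psi\circ f_n$ is simple whenever $f_n$ is; the tail-plus-uniform-continuity estimate, in which on the complement of $\{x:|f(x)|>M\}\cup\{x:|f_n(x)-f(x)|>\min(\eta,M)\}$ both $f(x)$ and $f_n(x)$ lie in $[-2M,2M]$ within $\eta$ of each other; the polarization identity $fg=\tfrac12[(f+g)^2-f^2-g^2]$ reducing products to squares and linearity; and the $1/n$-diagonal selection of simple $g_n$ with $\mu^*(\{x:|g_n(x)-f_n(x)|>1/n\})<1/n$ proving closure under hazy limits. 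Your final remark is also the right one to make explicit: the part-2 estimate transfers verbatim to the non-simple $f_n$ of part~3 because it never uses any bound on $f_n$ itself, only smoothness of the limit $f$ to confine everything to a compact interval off a set of small outer charge. This gives the paper something it currently lacks, namely a written-out proof of a result it only cites.
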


\begin{definition} \label{integrable}
Any simple function is {\em integrable} with integral
\[
\int \sum_{k=1}^K c_k I_{A_k} := \sum_{k=1}^K c_k \mu(A_k).
\]
A more general function $f: X \rightarrow \R$ is said to be {\em integrable} if there is a sequence of simple functions $\{ f_n \}_{n=1}^{\infty}$ such that:
\begin{enumerate}
\item $f_n \xrightarrow{h} f$, and
\item $\int | f_n - f_m | d\mu \rightarrow 0$ as $n,m \rightarrow \infty$.
\end{enumerate}
Such a sequence is said to be a {\em determining sequence} for $f$. The integral is then given by 
\[
\int f d\mu := \lim_{n \rightarrow \infty} \int f_n d\mu.
\]
This integral is well defined, that is the limit has the same value for any determining sequence (Theorem 4.4.10 in \cite{bhaskararao1983}).
\end{definition}

For two fields of sets $\calA, \calA^{\prime}$ with $\calA^{\prime} \subset \calA$ and a charge $\mu$ defined on $\calA$, ambiguity can arise as to whether a function is integrable with respect to $\calA$ or $\calA^{\prime}$. Where necessary to distinguish the two integrals, they are denoted $\int f d\mu_{\calA}$ and $\int f d\mu_{\calA^{\prime}}$ respectively. Where the relevant field of sets is clear from the context, the subscript can be omitted. 

The following theorem, adapted from Theorem~4.4.13 of~\cite{bhaskararao1983}, enumerates some key properties of integrable functions.

\begin{theorem} \label{L1_integrable_properties}
Let $(X,\calA,\mu)$ be a charge space and let $f$ and $g$ be real-valued functions on $X$. Then 
\begin{enumerate}
\item If $f$ and $g$ are integrable and $c$ and $d$ are real numbers, then $cf + dg$ is integrable and $\int (cf + dg) d\mu = c \int f d\mu + d \int g d\mu$.
\item $f$ is integrable if and only if $f^+ := \max \{ f, 0 \}$ and $f^- := \max\{ -f, 0 \}$ are integrable. 
\item If $f$ and $g$ are integrable and $f \leq g$ a.e., then $\int f d\mu \leq \int g d\mu$.
\item If $f$ is integrable and $g = f$ a.e. then $g$ is integrable and $\int g d\mu = \int f d\mu$.
\end{enumerate}
\end{theorem}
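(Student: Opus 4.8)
The plan is to derive all four parts directly from the definition of the integral via determining sequences (Definition~\ref{integrable}), relying on two elementary facts about simple functions that are immediate from the defining formula $\int \sum_k c_k I_{A_k}\, d\mu = \sum_k c_k \mu(A_k)$. First, the class of simple functions is closed under linear combinations, positive parts, and absolute differences: since $\calA$ is a field, one may always pass to a common refining partition, on which these operations act coefficient-wise. Second, the integral is monotone and linear on simple functions, so that $\phi \leq \psi \implies \int \phi\, d\mu \leq \int \psi\, d\mu$ (using $\mu \geq 0$). Throughout I will also use the monotonicity and subadditivity of $\mu^*$ to control hazy convergence of the candidate sequences.

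For part~1, given determining sequences $\{ f_n \}$ for $f$ and $\{ g_n \}$ for $g$, I would show that $\{ cf_n + dg_n \}$ is a determining sequence for $cf + dg$. Hazy convergence follows from the inclusion $\{ x : |(cf_n + dg_n) - (cf + dg)| > \epsilon \} \subseteq \{ x : |c|\,|f_n - f| > \epsilon/2 \} \cup \{ x : |d|\,|g_n - g| > \epsilon/2 \}$ and subadditivity of $\mu^*$; the Cauchy condition follows from $|cf_n + dg_n - cf_m - dg_m| \leq |c|\,|f_n - f_m| + |d|\,|g_n - g_m|$ together with monotonicity and linearity of the integral on simple functions. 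The stated identity is then obtained by passing to the limit in $\int (cf_n + dg_n)\, d\mu = c \int f_n\, d\mu + d \int g_n\, d\mu$.

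Part~4 is the most direct: if $\{ f_n \}$ is determining for $f$ and $g = f$ a.e., then $f_n \xrightarrow{h} g$ by Theorem~\ref{hazy_uniqueness}, while the Cauchy condition $\int |f_n - f_m|\, d\mu \to 0$ makes no reference to $f$; hence $\{ f_n \}$ is also a determining sequence for $g$, giving integrability of $g$ and $\int g\, d\mu = \lim_n \int f_n\, d\mu = \int f\, d\mu$. For part~2, the forward direction rests on the $1$-Lipschitz inequality $|a^+ - b^+| \leq |a - b|$: applied pointwise it shows that $\{ f_n^+ \}$ inherits both the hazy convergence (to $f^+$) and the Cauchy property from $\{ f_n \}$, so $\{ f_n^+ \}$ is determining for $f^+$, and symmetrically $\{ f_n^- \}$ for $f^-$. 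The reverse direction is simply part~1 applied to $f = f^+ - f^-$.

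The main obstacle is part~3, owing to the interplay between the integral and the \emph{almost everywhere} hypothesis. I would first establish the auxiliary fact that a nonnegative integrable function $\phi$ satisfies $\int \phi\, d\mu \geq 0$: replacing a determining sequence $\{ \phi_n \}$ by $\{ \phi_n^+ \}$ (still determining for $\phi = \phi^+$, by the argument of part~2) produces nonnegative simple functions, whose integrals are nonnegative, so the limit $\int \phi\, d\mu$ is nonnegative. I also note that a null function $h$ is integrable with $\int h\, d\mu = 0$, since the constant sequence $0$ is determining for it. Now, given $f \leq g$ a.e., by definition $f \leq g + h$ for some null function $h$, which may be taken nonnegative (replacing $h$ by $h^+$, as in Lemma~\ref{limit_dominated}); hence $(g - f) + h \geq 0$. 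By part~1 this function is integrable, so the auxiliary fact gives $\int \big( (g-f) + h \big)\, d\mu \geq 0$. Since $\int h\, d\mu = 0$ and the integral is linear, this rearranges to $\int f\, d\mu \leq \int g\, d\mu$.
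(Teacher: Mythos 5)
Your proposal is correct, but there is no internal proof to compare it against: the paper states Theorem~\ref{L1_integrable_properties} without proof, citing Theorem~4.4.13 of \cite{bhaskararao1983}. Your argument --- manufacturing determining sequences for $cf+dg$, $f^{+}$, $f^{-}$ and $g$ from those for $f$ and $g$ via subadditivity of $\mu^*$ and the $1$-Lipschitz bound $|a^{+}-b^{+}| \leq |a-b|$, and then reducing part~3 to the two auxiliary facts that nonnegative integrable functions have nonnegative integral and that null functions are integrable with integral zero --- is sound at every step and is essentially the standard determining-sequence route taken in that reference.
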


Another important property of integrals with respect to charges is given by Theorem~4.4.18 of~\cite{bhaskararao1983}, which may be expressed as follows.

\begin{theorem} \label{dominated_integrability}
Let $(X,\calA,\mu)$ be a charge space and let $f$ and $g$ be real-valued functions on $X$ such that $| g | \leq f$ a.e. and $f$ is integrable. Then $g$ is integrable if and only if it is $T_1$-measurable.
\end{theorem}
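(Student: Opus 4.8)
The plan is to dispatch the forward implication immediately and to concentrate on the converse. If $g$ is integrable then by Definition~\ref{integrable} it possesses a determining sequence of simple functions $g_n \xrightarrow{h} g$; these are in particular simple functions converging to $g$ hazily, so $g$ is $T_1$-measurable by definition. Note the domination hypothesis is not needed for this direction.

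For the converse I assume $g$ is $T_1$-measurable, $|g| \le f$ a.e., and $f$ is integrable, and I would first reduce to the case $g \ge 0$. Writing $g = g^+ - g^-$, both $g^+$ and $g^-$ are $T_1$-measurable by Theorem~\ref{T1_properties} applied to the continuous maps $t \mapsto \max\{t,0\}$ and $t \mapsto \max\{-t,0\}$, and each is dominated a.e. by $f$. By Theorem~\ref{L1_integrable_properties} it suffices to prove each of $g^+, g^-$ integrable, so I may assume $0 \le g \le f$ a.e. The engine of the argument is the sub-claim that every \emph{bounded} $T_1$-measurable function is integrable. Given simple functions $g_n \xrightarrow{h} g$ with $0 \le g \le N$, I truncate to $\hat g_n := \min\{\max\{g_n,0\},N\}$, which are simple, uniformly bounded by $N$, and still satisfy $\hat g_n \xrightarrow{h} g$ because truncation into an interval containing the range of $g$ is $1$-Lipschitz. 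Hazy convergence makes $\{\hat g_n\}$ hazy-Cauchy, and the elementary bound
\[
\int |\hat g_n - \hat g_m|\, d\mu \;\le\; \epsilon\,\mu(X) + 2N\,\mu\bigl(\{x: |\hat g_n(x) - \hat g_m(x)| > \epsilon\}\bigr),
\]
in which the exceptional set lies in $\calA$ since the integrand is simple, forces $\int|\hat g_n - \hat g_m|\,d\mu \to 0$. Thus $\{\hat g_n\}$ is a determining sequence and the bounded case is settled.

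Returning to the unbounded case, I set $g_N := \min\{g,N\}$; each $g_N$ is bounded and $T_1$-measurable by Theorem~\ref{T1_properties}, hence integrable by the sub-claim. Because $T_1$-measurability implies smoothness, $\mu^*(\{g > N\}) \to 0$, and since $\{|g_N - g| > \epsilon\} \subseteq \{g > N\}$ this yields $g_N \xrightarrow{h} g$. Monotonicity together with the domination $g_N \le f$ a.e. (Theorem~\ref{L1_integrable_properties}) make $\int g_N\,d\mu$ nondecreasing and bounded above by $\int f\,d\mu$, hence convergent; for $M > N$ one then has $\int|g_M - g_N|\,d\mu = \int g_M\,d\mu - \int g_N\,d\mu \to 0$, so $\{g_N\}$ is Cauchy in $L_1$. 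Finally, choosing for each $N$ a simple function $s_N$ with both $\int|g_N - s_N|\,d\mu$ and $\mu^*(\{|g_N - s_N| > 1/N\})$ smaller than $1/N$ (possible because $g_N$ is integrable) produces, via the subadditivity of $\mu^*$, a sequence of simple functions $\{s_N\}$ that converges hazily to $g$ and is Cauchy in $L_1$; this is a determining sequence, so $g$ is integrable.

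The main obstacle I anticipate is that the monotone and dominated convergence theorems fail for merely finitely additive charges, so one cannot simply pass to the pointwise limit $g_N \uparrow g$. The delicate point is therefore to verify \emph{simultaneously} that the truncated sequence converges hazily and is Cauchy in $L_1$, and then to convert a Cauchy sequence of integrable functions into a genuine determining sequence of simple functions. The reliance on smoothness to secure hazy convergence of the truncations is the crux of the matter, since it is precisely what substitutes for the missing convergence theorems.
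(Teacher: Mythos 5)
Your proof is essentially correct, but there is nothing in the paper to compare it against: Theorem~\ref{dominated_integrability} is one of the background results of Section~\ref{theory_of_charges}, quoted (as Theorem~4.4.18 of \cite{bhaskararao1983}) without proof, so what you have produced is a self-contained derivation from the paper's other quoted results rather than an alternative to an in-paper argument. Your route is sound: the forward direction is indeed immediate from Definition~\ref{integrable}; the reduction to $0 \le g \le f$ a.e.\ via $g^{\pm}$, Theorem~\ref{T1_properties} and Theorem~\ref{L1_integrable_properties} is fine; the bounded sub-claim correctly exploits boundedness of $\mu$ (the exceptional set is in $\calA$ because the integrand is simple, so $\mu^* = \mu$ there); and the truncations $g_N := \min\{g,N\}$, with smoothness giving $g_N \xrightarrow{h} g$ and monotonicity plus domination giving the $L_1$-Cauchy property, are exactly the right substitute for the missing monotone convergence theorem. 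The one step you assert rather than prove is the existence, for each integrable $g_N$, of a simple $s_N$ that is \emph{simultaneously} close to $g_N$ in the $L_1$ pseudonorm and hazily. This is true but needs a line, since Theorem~\ref{simple_dense} alone gives only $L_1$-closeness and the paper states no Markov-type inequality to convert that into hazy closeness. The fix is to take a tail element of a determining sequence $\{t_k\}$ for $g_N$: for fixed $k$ the simple functions $\{|t_j - t_k|\}_j$ form a determining sequence for $|g_N - t_k|$, because
\[
\bigl| |t_j - t_k| - |g_N - t_k| \bigr| \le |t_j - g_N| \quad\text{and}\quad \int \bigl| |t_j - t_k| - |t_i - t_k| \bigr| \, d\mu \le \int |t_i - t_j| \, d\mu,
\]
so $\int |g_N - t_k|\, d\mu = \lim_j \int |t_j - t_k|\, d\mu \le \sup_{i,j \ge k} \int |t_i - t_j|\, d\mu \rightarrow 0$ as $k \rightarrow \infty$; hence for $k$ large, $s_N := t_k$ satisfies both requirements. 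With that line inserted, your argument is complete.
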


\begin{definition} \label{Lp}
For $p \in [1,\infty)$, the function space $L_p(X,\calA,\mu)$ is the set of all $T_1$-measurable functions $f: X \rightarrow \R$ such that $| f |^p$ is integrable.
\end{definition}

The following theorem paraphrases Theorem~4.6.7 of~\cite[Sec. 4.6]{bhaskararao1983}).

\begin{theorem} \label{Lp_vector_space}
Let $(X,\calA,\mu)$ be a charge space and $p \in [1,\infty)$. Then $L_p(X,\calA,\mu)$ is a vector space with a pseudonorm
\[
\| f \|_p = \left( \int | f |^p d\mu \right)^{1/p}
\]
\end{theorem}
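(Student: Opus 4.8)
The plan is to verify in turn that $L_p(X,\calA,\mu)$ is closed under the vector-space operations and that $\|\cdot\|_p$ satisfies the pseudonorm axioms (nonnegativity, absolute homogeneity, and the triangle inequality; definiteness is not required, which is precisely why this is a pseudonorm rather than a norm). The underlying vector-space axioms are inherited from the space of all real-valued functions on $X$, so only closure must be checked. For scalar multiplication, if $f \in L_p$ and $c \in \R$, then $cf$ is $T_1$-measurable by Theorem~\ref{T1_properties}(1), and since $|cf|^p = |c|^p|f|^p$, integrability of $|cf|^p$ follows from integrability of $|f|^p$ together with linearity of the integral (Theorem~\ref{L1_integrable_properties}(1)); the same computation yields homogeneity $\|cf\|_p = |c|\,\|f\|_p$. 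Nonnegativity of $\|f\|_p$ is immediate by applying monotonicity of the integral (Theorem~\ref{L1_integrable_properties}(3)) to $0 \leq |f|^p$.

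For closure under addition the crucial point is that $|f+g|^p$ is integrable whenever $f,g \in L_p$. First, $f+g$ is $T_1$-measurable by Theorem~\ref{T1_properties}(1), whence $|f+g|^p$ is $T_1$-measurable by Theorem~\ref{T1_properties}(2) applied to the continuous map $t \mapsto |t|^p$. Next, the pointwise bound $|f+g|^p \leq 2^p(|f|^p + |g|^p)$ dominates $|f+g|^p$ by an integrable function. I would then invoke Theorem~\ref{dominated_integrability}: a $T_1$-measurable function dominated almost everywhere by an integrable function is itself integrable. Hence $|f+g|^p$ is integrable and $f+g \in L_p$. This device of upgrading measurability to integrability via Theorem~\ref{dominated_integrability}, rather than exhibiting determining sequences by hand, recurs throughout and is what keeps the argument short.

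The substantive work is the triangle inequality $\|f+g\|_p \leq \|f\|_p + \|g\|_p$. For $p=1$ it is immediate from monotonicity and linearity applied to $|f+g| \leq |f| + |g|$. For $p>1$, I would first establish Hölder's inequality $\int|fg|\,d\mu \leq \|f\|_p\|g\|_q$ for the conjugate exponent $q$ with $1/p + 1/q = 1$: after normalising so that $\|f\|_p = \|g\|_q = 1$, Young's inequality gives the pointwise bound $|fg| \leq |f|^p/p + |g|^q/q$; the product $fg$ is $T_1$-measurable and is dominated by an integrable function, hence integrable by Theorem~\ref{dominated_integrability}, and integrating the pointwise bound gives the claim. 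Minkowski's inequality then follows by the standard splitting
\[
\int |f+g|^p\,d\mu \leq \int |f|\,|f+g|^{p-1}\,d\mu + \int |g|\,|f+g|^{p-1}\,d\mu,
\]
applying Hölder to each term, noting $|f+g|^{p-1} \in L_q$ since $|f+g|^{(p-1)q} = |f+g|^p$ is integrable, and dividing through by $\|f+g\|_p^{p/q}$ when this quantity is nonzero (the case $\|f+g\|_p = 0$ being trivial).

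The main obstacle is ensuring that every integrand arising in these manipulations is genuinely integrable before the order and linearity properties of the charge integral may be applied. In the finitely additive setting this is not automatic, and Theorem~\ref{dominated_integrability} is the essential tool that licenses each such step.
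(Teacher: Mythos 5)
The paper does not actually prove this theorem: it is stated as a paraphrase of Theorem~4.6.7 of \cite{bhaskararao1983}, so there is no in-paper argument to compare against. Your proof --- Young's inequality yielding H\"older, then Minkowski via the standard splitting, with Theorem~\ref{dominated_integrability} invoked at each step to convert $T_1$-measurability plus a.e.\ domination into integrability --- is correct (modulo the routine case $\|f\|_p = 0$ or $\|g\|_q = 0$ excluded by your normalisation, which is handled by noting that such a function is null, hence its product with any smooth, in particular $T_1$-measurable, function is null and integrates to zero) and is essentially the same approach taken in the cited reference.
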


One can construct a normed vector space from $L_p(X,\calA,\nu)$ as follows. 

\begin{definition}
For $p \in [1,\infty)$, the function space $\calL_p(X,\calA,\mu)$ is the set of equivalence classes in $L_p(X,\calA,\mu)$ under the equivalence relation $f \sim g$ if and only if $\| f - g \|_p = 0$. 
\end{definition}

The following characterisation of this equivalence relation is a consequence of Theorem~4.4.13(ix) of~\cite{bhaskararao1983} (see also Comment~1.5 of \cite{basile2000}).

\begin{theorem} \label{Lp_equivalence_classes}
For $p \in \{0\} \cup [1,\infty)$ and $f, g \in L_p(X,\calA,\mu)$, $f \sim g$ if and only if $f$ and $g$ are equal a.e.  
\end{theorem}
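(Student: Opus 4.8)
The plan is to reduce the statement to a single function and then establish a Chebyshev-type inequality. Writing $h := f - g$, I would first note that for $p \in [1,\infty)$ the space $L_p(X,\calA,\mu)$ is a vector space (Theorem~\ref{Lp_vector_space}), so $h \in L_p$ and in particular $|h|^p$ is integrable; moreover $f \sim g$ means exactly $\|f-g\|_p = \|h\|_p = 0$, i.e.\ $\int |h|^p\,d\mu = 0$, whereas ``$f = g$ a.e.'' means precisely that $h$ is a null function. Thus for $p \geq 1$ the theorem is equivalent to the claim that, for $h \in L_p$, one has $\int |h|^p\,d\mu = 0$ if and only if $h$ is null. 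The case $p = 0$ is immediate: the relevant equivalence relation is $d(f,g) = 0$, and since $d(f,g) = d(h,0)$ by definition, the remark following Definition~\ref{L0} already records that $d(h,0) = 0$ if and only if $h$ is null. So it remains to treat $p \geq 1$.

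For the easy direction, suppose $h$ is null. Then $|h|^p$ is also null, since for every $\delta > 0$ the set $\{x : |h(x)|^p > \delta\}$ equals $\{x : |h(x)| > \delta^{1/p}\}$ and hence has zero outer charge. Consequently $|h|^p = 0$ a.e., and applying Theorem~\ref{L1_integrable_properties}(4) with the integrable function $0$ (whose integral is $0$) yields $\int |h|^p\,d\mu = \int 0\,d\mu = 0$.

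The converse is the substantive direction, and I would deduce it from the Chebyshev-type inequality
\[
\mu^*(\{x : w(x) \geq t\}) \leq \frac{1}{t}\int w\,d\mu \qquad (w \text{ integrable},\ w \geq 0,\ t > 0).
\]
Granting this, apply it with $w = |h|^p$ and $t = \epsilon^p$: since $\{|h| > \epsilon\} \subseteq \{|h|^p \geq \epsilon^p\}$, monotonicity of $\mu^*$ gives $\mu^*(\{|h| > \epsilon\}) \leq \epsilon^{-p}\int |h|^p\,d\mu = 0$ for every $\epsilon > 0$, which is exactly the statement that $h$ is null. To prove the inequality I would use that $T_1$-measurability is equivalent to $T_2$-measurability: given $\eta > 0$, choose a finite partition $\{A_0,A_1,\dots,A_K\} \subseteq \calA$ with $\mu(A_0) < \eta$ and oscillation of $w$ below $\eta$ on each $A_k$ with $k \geq 1$. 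Setting $m_k := \inf_{A_k} w \geq 0$, the simple function $s := \sum_{k \geq 1} m_k I_{A_k}$ satisfies $0 \leq s \leq w$, so $\int s\,d\mu \leq \int w\,d\mu$ by Theorem~\ref{L1_integrable_properties}(3). The set $C := A_0 \cup \bigcup\{A_k : m_k > t - \eta\}$ lies in $\calA$ and contains $\{w \geq t\}$, because any point of $\{w \geq t\}$ outside $A_0$ sits in a block on which $\sup w \geq t$ and hence $m_k > t - \eta$. Therefore $\mu^*(\{w \geq t\}) \leq \mu(C) \leq \eta + (t-\eta)^{-1}\int s\,d\mu \leq \eta + (t-\eta)^{-1}\int w\,d\mu$, and letting $\eta \downarrow 0$ gives the inequality.

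The main obstacle is exactly this last inequality. The difficulty is that the superlevel set $\{w \geq t\}$ need not belong to $\calA$, so one cannot simply integrate its indicator and invoke monotonicity; the resolution above circumvents this by covering the superlevel set with a genuinely measurable union of partition blocks and simultaneously under-approximating $w$ by a simple function, trading an error $\eta$ that is then sent to zero. Everything else is bookkeeping built on the vector-space structure of $L_p$, the definition of the $L_p$ pseudonorm, and the elementary properties of the integral already recorded above.
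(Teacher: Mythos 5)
Your proof is correct in substance, but note that the paper itself does not prove this theorem: Theorem~\ref{Lp_equivalence_classes} sits in the review section (Section~\ref{theory_of_charges}), where results stated without proof are imported from the literature, and the paper simply cites Theorem~4.4.13(ix) of \cite{bhaskararao1983} together with Comment~1.5 of \cite{basile2000}. What you have done is reconstruct the content of that citation from scratch. Your reduction to $h := f - g$, the disposal of the $p=0$ case via the remark following Definition~\ref{L0}, and the easy direction via Theorem~\ref{L1_integrable_properties}(4) applied to the zero function are exactly the right bookkeeping; the substantive direction is a Markov/Chebyshev inequality for charges, which you derive directly from $T_2$-measurability rather than quoting it. This makes your argument self-contained modulo only results the paper already states (equivalence of $T_1$- and $T_2$-measurability, Theorems~\ref{Lp_vector_space} and~\ref{L1_integrable_properties}), which is more than the paper offers, and the covering-by-partition-blocks device is a genuinely apt way to handle the fact that the superlevel set $\{w \geq t\}$ need not lie in $\calA$. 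One small repair is needed: from $w(x) \geq t$ and oscillation $< \eta$ on the block $A_k$ containing $x$, you may only conclude $m_k \geq t - \eta$, not $m_k > t - \eta$, since an infimum of values each strictly exceeding $t-\eta$ can still equal $t-\eta$. Define $C := A_0 \cup \bigcup\{A_k : m_k \geq t - \eta\}$ instead; the containment $\{w \geq t\} \subseteq C$ then holds, and the chain $\mu(C) \leq \eta + (t-\eta)^{-1}\int s\,d\mu \leq \eta + (t-\eta)^{-1}\int w\,d\mu$ (for $0 < \eta < t$) goes through unchanged, so letting $\eta \downarrow 0$ still yields the inequality. With that one-symbol fix the proof is complete.
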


Note the functional $\| [f] \|_p = \| f \|_p$ is a norm for $\calL_p(X,\calA,\mu)$, where $[f]$ is the equivalence class of $f \in L_p(X,\calA,\mu)$. Where there is potential ambiguity or uncertainty regarding the field $\calA$ with respect to which the equivalence class is defined, the notation $[f]_{\calA}$ will be used to clarify.

It is also possible to define pseudometric spaces $L_p(X,\calA,\mu)$ and metric spaces $\calL_p(X,\calA,\mu)$ for $p \in (0,1)$ (see \cite{basile2000}). Much of the theory developed in this paper could be generalised to include these spaces, but as these spaces are of limited interest, and for the sake of simplicity, the paper focuses on the cases $p = 0$ and $p \in [1,\infty)$.

The following simple lemma is frequently useful. 

\begin{lemma} \label{Lp_pos_neg_parts}
Let $(X,\calA,\mu)$ be a charge space and $p \in [1,\infty)$. Then the following three statements are logically equivalent.
\begin{enumerate}
\item $f \in L_p(X,\calA,\mu)$.
\item $(f^+)^p, (f^-)^p \in L_1(X,\calA,\mu)$.
\item $f^+, f^- \in L_p(X,\calA,\mu)$.
\end{enumerate}
Moreover, if any of these statements holds, then $| f | \in L_p(X,\calA,\mu)$ and $| f |^p \in L_1(X,\calA,\mu)$.
\end{lemma}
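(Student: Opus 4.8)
The plan is to establish the cycle of implications $(1) \Rightarrow (2) \Rightarrow (3) \Rightarrow (1)$, resting throughout on the elementary pointwise identity $|f|^p = (f^+)^p + (f^-)^p$ together with the pointwise bounds $0 \le (f^+)^p \le |f|^p$ and $0 \le (f^-)^p \le |f|^p$. These hold everywhere (not merely a.e.) because at each $x \in X$ at most one of $f^+(x), f^-(x)$ is nonzero and $f^\pm \le |f|$. The two workhorses will be Theorem~\ref{T1_properties}, which transports $T_1$-measurability across continuous maps (part 2) and linear combinations (part 1), and Theorem~\ref{dominated_integrability}, which upgrades a dominated $T_1$-measurable function to an integrable one.

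For $(1) \Rightarrow (2)$ I would argue as follows. Assuming $f$ is $T_1$-measurable with $|f|^p$ integrable, note that $(f^+)^p = \psi(f)$ for the continuous map $\psi(t) = (\max\{t,0\})^p$, so $(f^+)^p$ is $T_1$-measurable by Theorem~\ref{T1_properties}(2); since $0 \le (f^+)^p \le |f|^p$ a.e. and $|f|^p$ is integrable, Theorem~\ref{dominated_integrability} makes $(f^+)^p$ integrable, hence $(f^+)^p \in L_1$, and identically for $(f^-)^p$. For $(2) \Rightarrow (3)$, given $(f^+)^p \in L_1$ (so $T_1$-measurable and integrable), composing with the continuous map $\phi(t) = (\max\{t,0\})^{1/p}$ recovers $f^+ = \phi((f^+)^p)$ using $(f^+)^p \ge 0$ pointwise; thus $f^+$ is $T_1$-measurable by Theorem~\ref{T1_properties}(2), and since $|f^+|^p = (f^+)^p$ is integrable, $f^+ \in L_p$, and symmetrically $f^- \in L_p$.

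For $(3) \Rightarrow (1)$, from $f^+, f^- \in L_p$ I obtain that $f = f^+ - f^-$ is $T_1$-measurable by Theorem~\ref{T1_properties}(1), while $|f|^p = (f^+)^p + (f^-)^p$ is a sum of integrable functions and hence integrable by Theorem~\ref{L1_integrable_properties}(1); therefore $f \in L_p$. For the closing claim, once all three conditions hold, $|f| = \psi(f)$ with $\psi(t) = |t|$ continuous shows $|f|$ is $T_1$-measurable, and with $|f|^p$ integrable this gives $|f| \in L_p$; likewise $|f|^p = \psi(f)$ with $\psi(t) = |t|^p$ continuous is $T_1$-measurable, and being integrable it lies in $L_1$.

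The main obstacle I anticipate is purely a matter of bookkeeping rather than depth: ensuring that every composition invoked is with a function continuous on all of $\R$, which is precisely why I use the truncated forms $(\max\{t,0\})^p$ and $(\max\{t,0\})^{1/p}$ rather than $t^p$ or $t^{1/p}$ (the latter being undefined or non-smooth for negative arguments), and checking that the domination and summation identities are genuine pointwise equalities so that Theorem~\ref{dominated_integrability} and Theorem~\ref{L1_integrable_properties}(1) apply directly.
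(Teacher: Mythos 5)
Your proof is correct and takes essentially the same route as the paper: the cycle $(1)\Rightarrow(2)\Rightarrow(3)\Rightarrow(1)$ driven by continuous composition (Theorem~\ref{T1_properties}(2)) and domination (Theorem~\ref{dominated_integrability}). The only differences are cosmetic and in your favour on detail: where the paper dispatches $(2)\Rightarrow(3)$ as ``immediate from the definition,'' you explicitly recover the $T_1$-measurability of $f^+$ via the continuous map $t \mapsto (\max\{t,0\})^{1/p}$, and where the paper closes $(3)\Rightarrow(1)$ by citing the vector space structure of $L_p$ (Theorem~\ref{Lp_vector_space}), you instead use the pointwise identity $|f|^p = (f^+)^p + (f^-)^p$ with additivity of the integral (Theorem~\ref{L1_integrable_properties}(1)).
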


\begin{proof}
$(1 \implies 2)$ $(f^+)^p$ and $(f^-)^p$ are $T_1$-measurable by Theorem~\ref{T1_properties}(2), and then integrable by Theorem~\ref{dominated_integrability}.

$(2 \implies 3)$ This is immediate from the definition of $L_p(X,\calA,\mu)$.

$(3 \implies 1)$ This follows by Theorem~\ref{Lp_vector_space}, since $f = f^+ - f^-$.

The additional statements follow by Theorem~\ref{T1_properties} and the definition of $L_p(X,\calA,\mu)$.
\qed \end{proof}

The $L_p$ spaces respect the partial ordering on fields of subsets in the sense determined by the following lemma, which follows straightforwardly from the relevant definitions (the proof is omitted).

\begin{lemma} \label{nested_fields}
Suppose $(X,\calA,\mu)$ and $(X,\calA^{\prime},\mu)$ are charge spaces with $\calA^{\prime} \subseteq \calA$. Then the following statements hold.
\begin{enumerate}
\item If $f_n \xrightarrow{h,\calA^{\prime}} f$ for real-valued functions $\{ f_n \}_{n=1}^{\infty}$ and $f$ on $X$, then $f_n \xrightarrow{h,\calA} f$.
\item If $f$ is a simple function with respect to $\calA^{\prime}$, then it is simple with respect to $\calA$.
\item If $f$ is a $T_1$-measurable function with respect to $\calA^{\prime}$, then it is $T_1$-measurable with respect to $\calA$.
\item If $f$ is an integrable function with respect to $\calA^{\prime}$, then it is integrable with respect to $\calA$. Moreover, $\int f d\mu_{\calA^{\prime}} = \int f d\mu_{\calA}$.
\item $L_p(X,\calA^{\prime},\mu) \subseteq L_p(X,\calA,\mu)$ for $p \in \{0\} \cup [1,\infty)$.
\item $\mu^*_{\calA^{\prime}}(A) \geq \mu^*_{\calA}(A)$ for all $A \in \calA$.
\end{enumerate}
\end{lemma}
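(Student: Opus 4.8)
The plan is to derive all six statements from a single structural fact together with one observation about simple functions. The structural fact is that enlarging the field can only shrink the outer charge, which is precisely statement~(6); the observation is that a simple function built from $\calA^{\prime}$-sets, when regarded as an $\calA$-simple function, is built from the same sets with the same charge values, so its integral is unchanged. Because of the dependencies, I would prove the parts in the order (6), (1), (2), (3), (4), (5), even though (6) is listed last.

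I would prove~(6) first. Fix $A \subseteq X$ (the argument is valid for arbitrary subsets, in particular for $A \in \calA$). Since $\calA^{\prime} \subseteq \calA$, every $B \in \calA^{\prime}$ with $A \subseteq B$ also lies in $\calA$, so the collection $\{ B \in \calA^{\prime} : A \subseteq B \}$ over which $\mu^*_{\calA^{\prime}}(A)$ is infimised is contained in the collection $\{ B \in \calA : A \subseteq B \}$ defining $\mu^*_{\calA}(A)$. Infimising over a larger collection gives a value no larger, so $\mu^*_{\calA^{\prime}}(A) \geq \mu^*_{\calA}(A)$. Statement~(1) is then immediate: if $\mu^*_{\calA^{\prime}}(\{ x : | f_n(x) - f(x) | > \epsilon \}) \to 0$ for each $\epsilon > 0$, then by~(6) these same sets satisfy $0 \leq \mu^*_{\calA}(\{ x : | f_n(x) - f(x) | > \epsilon \}) \leq \mu^*_{\calA^{\prime}}(\{ x : | f_n(x) - f(x) | > \epsilon \}) \to 0$, so $f_n \xrightarrow{h,\calA} f$. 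For~(2), a simple function $\sum_{k=1}^{K} c_k I_{A_k}$ with respect to $\calA^{\prime}$ has each $A_k \in \calA^{\prime} \subseteq \calA$, and $\{ A_1, \ldots, A_K \}$ is still a partition of $X$, so it is simple with respect to $\calA$. Statement~(3) follows by combining these: the witness for $T_1$-measurability with respect to $\calA^{\prime}$ is a sequence of $\calA^{\prime}$-simple functions converging hazily to $f$ with respect to $\calA^{\prime}$; by~(2) these are $\calA$-simple and by~(1) they converge hazily to $f$ with respect to $\calA$.

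The step requiring the most care is~(4), and it is where I would concentrate the argument. Starting from a determining sequence $\{ f_n \}$ for $f$ with respect to $\calA^{\prime}$, statements~(1) and~(2) already show that the $f_n$ are $\calA$-simple and that $f_n \xrightarrow{h,\calA} f$. The remaining issue is the Cauchy condition $\int | f_n - f_m | d\mu \to 0$ and the value of the limit. The key point is that $| f_n - f_m |$ is itself $\calA^{\prime}$-simple, and the integral of any simple function, $\int \sum_k c_k I_{A_k} = \sum_k c_k \mu(A_k)$, depends only on $\mu$ evaluated on the constituent sets, not on which field those sets are viewed as belonging to. Hence $\int | f_n - f_m | d\mu_{\calA^{\prime}} = \int | f_n - f_m | d\mu_{\calA}$ and $\int f_n d\mu_{\calA^{\prime}} = \int f_n d\mu_{\calA}$ for every $n$. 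Thus $\{ f_n \}$ is a determining sequence for $f$ with respect to $\calA$, establishing integrability, and passing to the limit yields $\int f d\mu_{\calA^{\prime}} = \lim_{n} \int f_n d\mu_{\calA^{\prime}} = \lim_{n} \int f_n d\mu_{\calA} = \int f d\mu_{\calA}$.

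Finally,~(5) is a direct consequence of the preceding parts. For $p = 0$ it is exactly~(3). For $p \in [1,\infty)$, if $f \in L_p(X,\calA^{\prime},\mu)$ then $f$ is $T_1$-measurable with respect to $\calA$ by~(3) and $| f |^p$ is integrable with respect to $\calA$ by~(4), so $f \in L_p(X,\calA,\mu)$. The only genuine subtlety in the whole argument is the field-independence of the simple-function integral used in~(4); every other step is an immediate transfer along the inclusion $\calA^{\prime} \subseteq \calA$.
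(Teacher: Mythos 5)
Your proof is correct, and it is precisely the argument the paper has in mind: the paper omits the proof of this lemma, stating only that it ``follows straightforwardly from the relevant definitions,'' which is exactly what you have carried out (deriving everything from the monotonicity of the outer charge under enlargement of the field, the transfer of simple functions, and the field-independence of the simple-function integral). The one step you rightly flag as needing care---that a determining sequence with respect to $\calA^{\prime}$ remains a determining sequence with respect to $\calA$ because $|f_n - f_m|$ is $\calA^{\prime}$-simple and its integral is computed from the same values $\mu(A_k)$ in either field---is handled correctly, including the appeal to well-definedness of the integral (Theorem 4.4.10 of the cited reference) when passing to the limit.
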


The $L_p$ spaces are nested as described in the following theorem (Corollary~4.6.5 of~\cite{bhaskararao1983}).

\begin{theorem} \label{Lp_nesting}
Let $(X,\calA,\mu)$ be a charge space and $r, s \in [1,\infty)$ with $r \leq s$. Then
\[
L_1(X,\calA,\mu) \supseteq L_r(X,\calA,\mu) \supseteq L_s(X,\calA,\mu).
\]
\end{theorem}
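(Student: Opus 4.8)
The plan is to reduce both inclusions to a single general fact: for any exponents $1 \le p \le q < \infty$ we have $L_q(X,\calA,\mu) \subseteq L_p(X,\calA,\mu)$. Applying this with $(p,q) = (1,r)$ yields $L_1 \supseteq L_r$, and applying it with $(p,q) = (r,s)$ yields $L_r \supseteq L_s$, which together give the displayed chain of inclusions. So I only need to establish the general claim.

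To prove $L_q \subseteq L_p$ for $1 \le p \le q$, I would take $f \in L_q(X,\calA,\mu)$, so that $f$ is $T_1$-measurable and $|f|^q$ is integrable, and show $f \in L_p$, i.e.\ that $|f|^p$ is integrable. First, $|f|^p$ is $T_1$-measurable, since $t \mapsto |t|^p$ is continuous and $T_1$-measurability is preserved under composition with continuous functions (Theorem~\ref{T1_properties}(2)). It therefore remains to check integrability, for which I would invoke Theorem~\ref{dominated_integrability}: a $T_1$-measurable function that is dominated a.e.\ in absolute value by an integrable function is itself integrable.

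The dominating function comes from the elementary pointwise inequality $|f(x)|^p \le 1 + |f(x)|^q$, valid at every $x$: when $|f(x)| \le 1$ the left side is at most $1$, and when $|f(x)| > 1$ it is at most $|f(x)|^q$ because $p \le q$. Here the boundedness hypothesis $\mu(X) < \infty$ plays the essential role: it makes the constant function $1 = I_X$ a finite-valued simple function and hence integrable, so that $1 + |f|^q$ is a sum of integrable functions and thus integrable by Theorem~\ref{L1_integrable_properties}(1). Since $|f|^p$ is $T_1$-measurable and satisfies $|f|^p \le 1 + |f|^q$, Theorem~\ref{dominated_integrability} gives that $|f|^p$ is integrable, so $f \in L_p$.

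I expect no serious obstacle; the argument is essentially the classical one for finite measure spaces. The only point requiring care is the appeal to boundedness: the inclusion genuinely fails for unbounded charges, so the integrability of the constant $1$ (equivalently, that $I_X$ is an admissible simple function) is the one step that must not be glossed over. Everything else reduces to the pointwise bound and the dominated-integrability criterion of Theorem~\ref{dominated_integrability}.
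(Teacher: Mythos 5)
Your proof is correct: each step is justified by results available in the paper --- $T_1$-measurability of $|f|^p$ via Theorem~\ref{T1_properties}(2), integrability of the constant $I_X$ from boundedness of $\mu$, integrability of $1+|f|^q$ via Theorem~\ref{L1_integrable_properties}(1), the pointwise bound $|f|^p \leq 1 + |f|^q$, and the dominated-integrability criterion of Theorem~\ref{dominated_integrability}. The paper itself offers no proof of this theorem (it is quoted from Corollary~4.6.5 of the cited reference), and your argument is the standard one for bounded charge spaces that underlies that citation, so there is nothing to flag beyond noting that your reduction to the single inclusion $L_q \subseteq L_p$ for $1 \leq p \leq q$ is exactly the right way to package it.
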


Like the Lebesgue integral, integration with respect to a charge satisfies a {\em dominated convergence} theorem. The following is adapted from Theorem~4.6.14 of~\cite{bhaskararao1983}.

\begin{theorem} \label{dominated_convergence}
Let $(X,\calA,\mu)$ be a charge space and let $g \in L_p(X,\calA,\mu)$ for some $p \in [1,\infty)$. Let $\{ f_k \}_{k=1}^{\infty}$ be a sequence of $T_1$-measurable functions on $X$ such that $| f_k | \leq g$ a.e. for each $k \in \N$, and let $f$ be a real-valued function on $X$. Then $f_k \xrightarrow{h} f$ if and only if $f \in L_p(X,\calA,\mu)$ and $\| f_k - f \|_p \rightarrow 0$.
\end{theorem}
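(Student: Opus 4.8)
The plan is to prove the two implications separately, after first recording a sub-lemma needed for both: if $u$ is $T_1$-measurable and $|u| \le g$ a.e.\ for some $g \in L_p$, then $u \in L_p$. To see this, observe that $(|u| - g^+)^+$ is dominated by the positive part of a null function witnessing $|u| \le g$ a.e.\ and is therefore null, so $|u| = \min(|u|, g^+)$ a.e. Applying Theorem~\ref{T1_properties}(3) to the constant sequence $|u|, |u|, \dots$ (which converges hazily to $\min(|u|, g^+)$ precisely because the two functions are equal a.e.) with the continuous map $t \mapsto |t|^p$ yields $|u|^p = \min(|u|, g^+)^p$ a.e.; since $\min(|u|, g^+)^p \le (g^+)^p \le |g|^p$ pointwise and $|g|^p$ is integrable by Lemma~\ref{Lp_pos_neg_parts}, Theorem~\ref{dominated_integrability} shows the $T_1$-measurable function $|u|^p$ is integrable, i.e.\ $u \in L_p$. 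In particular each $f_k \in L_p$.

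For the reverse implication, suppose $f \in L_p$ and $\| f_k - f \|_p \to 0$. I would first establish a Chebyshev-type inequality: for integrable $\phi \ge 0$ and $a > 0$, $\mu^*(\{ \phi > a \}) \le a^{-1} \int \phi \, d\mu$. This is immediate for non-negative simple functions and passes to the general case along a non-negative determining sequence $s_n \xrightarrow{h} \phi$ using the inclusion $\{ \phi > a \} \subseteq \{ s_n > a - \epsilon' \} \cup \{ | \phi - s_n | > \epsilon' \}$, subadditivity of $\mu^*$, and $\int s_n \, d\mu \to \int \phi \, d\mu$. Applying this to $\phi = | f_k - f |^p$ (integrable since $f_k, f \in L_p$) with $a = \epsilon^p$ gives $\mu^*(\{ | f_k - f | > \epsilon \}) \le \epsilon^{-p} \| f_k - f \|_p^p \to 0$, which is exactly $f_k \xrightarrow{h} f$.

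For the forward implication, suppose $f_k \xrightarrow{h} f$. Then $f$ is $T_1$-measurable by Theorem~\ref{T1_properties}(3), and since $| f_k | \xrightarrow{h} | f |$ with $| f_k | \le g$ a.e., Lemma~\ref{limit_dominated} gives $| f | \le g$ a.e., so the sub-lemma yields $f \in L_p$. It remains to show $\| f_k - f \|_p \to 0$. Writing $d_k := | f_k - f |$, which is $T_1$-measurable with $d_k \xrightarrow{h} 0$ and $d_k \le 2g$ a.e., I would decompose $d_k = \min(d_k, \delta) + (d_k - \delta)^+$ and use $(a+b)^p \le 2^{p-1}(a^p + b^p)$ to obtain
\[
\int d_k^p \, d\mu \le 2^{p-1} \Big( \delta^p \mu(X) + \int \big( (d_k - \delta)^+ \big)^p d\mu \Big).
\]
The first term is small for small $\delta$, which is where boundedness $\mu(X) < \infty$ enters. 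For the tail term, write $w_k := ((d_k - \delta)^+)^p$ and $G := 2^p g^p$, so that $0 \le w_k \le G$ a.e.\ and $w_k$ vanishes off $A_k := \{ d_k > \delta \}$, where $\mu^*(A_k) \to 0$. Truncating $G$ at level $M$ gives $\int w_k \, d\mu \le \int \min(w_k, M) \, d\mu + \int (G - M)^+ d\mu$. For the first summand, choosing $B \in \calA$ with $B \supseteq A_k$ and $\mu(B)$ close to $\mu^*(A_k)$ and applying monotonicity (Theorem~\ref{L1_integrable_properties}(3)) to $\min(w_k, M) \le M I_B$ gives the bound $M \mu^*(A_k)$. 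For the second summand I would show $\int (G - M)^+ d\mu \to 0$ as $M \to \infty$: approximating $G$ by a bounded simple function $s$ with $\int | G - s | \, d\mu$ small, any $M$ exceeding the bound of $s$ satisfies $(G - M)^+ \le | G - s |$ pointwise, so $\int (G - M)^+ d\mu \le \int | G - s | \, d\mu$. Fixing $M$ to control the second summand and then sending $k \to \infty$ so that $M \mu^*(A_k) \to 0$ shows $\int w_k \, d\mu \to 0$; combining the estimates gives $\| f_k - f \|_p \to 0$.

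The main obstacle is this final tail estimate. Because $\mu$ is only finitely additive, there is no Borel--Cantelli mechanism to extract an a.e.-convergent subsequence from $f_k \xrightarrow{h} f$, so the classical measure-theoretic proof of dominated convergence is unavailable and the argument must instead be of uniform-integrability type. Two technical points need care: the super-level sets $A_k$ need not belong to $\calA$, which forces the passage through $\mu^*$ and the approximation from above by $B \in \calA$; and the existence of a bounded simple $s$ with $\int | G - s | \, d\mu$ small must be derived directly from integrability of $G$, which I would justify by noting that for any determining sequence $s_n \xrightarrow{h} G$ the simple functions $\{ | s_n - s_m | \}_{m=1}^\infty$ themselves form a determining sequence for $| G - s_n |$, whence $\int | G - s_n | \, d\mu \to 0$ as $n \to \infty$.
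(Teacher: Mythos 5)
Your proof is correct, but there is nothing in the paper to compare it against: Theorem~\ref{dominated_convergence} is stated without proof, as an adaptation of Theorem~4.6.14 of \cite{bhaskararao1983}, and per the preamble of Section~\ref{theory_of_charges} such results are proved only in the references. What you have produced is therefore a self-contained proof assembled entirely from results the paper does quote, and it holds together. Your sub-lemma (a.e.\ domination by an $L_p$ function plus $T_1$-measurability implies membership in $L_p$) is a clean combination of Theorems~\ref{hazy_uniqueness}, \ref{T1_properties} and \ref{dominated_integrability}; the reverse implication via a Markov--Chebyshev inequality for $\mu^*$, proved by passing along a non-negative determining sequence, is sound; and the forward implication via the double truncation ($\delta$-truncation of $|f_k - f|$, $M$-truncation of the dominating function, then $k \to \infty$ with $M$, $\delta$ fixed) is exactly the uniform-integrability style argument that must replace the classical proof, since, as you note, finite additivity provides no a.e.-convergent subsequence. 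Boundedness of $\mu(X)$ enters precisely where you say it does, and the quantifier order in the final limit is right.

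Two minor patches, neither a genuine gap. First, your dominating function $G := 2^p g^p$ is ill-defined when $g$ takes negative values and $p$ is not an integer; since $|f_k| \leq g$ a.e.\ forces $g^-$ to be a null function, replace it by $G := 2^{2p-1} (g^+)^p$, which is integrable by Lemma~\ref{Lp_pos_neg_parts}, and absorb the null corrections into the a.e.\ relations (the inequality $(w_k - M)^+ \leq (G - M)^+$ then holds a.e.\ rather than pointwise, which is all that Theorem~\ref{L1_integrable_properties}(3) requires under Definition~\ref{dominated_ae}). Second, the existence of a bounded simple $s$ with $\int |G - s| \, d\mu$ small need not be re-derived from determining sequences: it is exactly the density of simple functions in $L_1$, which the paper records as Theorem~\ref{simple_dense} -- though your derivation of it is also valid.
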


The simple functions in $L_p(X,\calA,\mu)$ have the following property, adapted from Theorem~4.6.15 of~\cite{bhaskararao1983}.

\begin{theorem} \label{simple_dense}
Let $(X,\calA,\mu)$ be a charge space. The simple functions on $X$ with respect to $\calA$ are dense in $L_p(X,\calA,\mu)$ for every $p \in \{0\} \cup [1,\infty)$.
\end{theorem}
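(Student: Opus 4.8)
The plan is to treat the two regimes separately. For $p = 0$, density is with respect to the pseudometric $d$ of Definition~\ref{L0}, and the result is essentially immediate: $f \in L_0(X,\calA,\mu)$ means precisely that $f$ is $T_1$-measurable, i.e.\ that there is a sequence of simple functions $f_n \xrightarrow{h} f$; since hazy convergence coincides with convergence in $d$, these simple functions already witness density. The substantive case is $p \in [1,\infty)$, where I would work with the pseudonorm $\|\cdot\|_p$ of Theorem~\ref{Lp_vector_space}. First I would reduce to non-negative $f$: given $f \in L_p(X,\calA,\mu)$, Lemma~\ref{Lp_pos_neg_parts} gives $f^+, f^- \in L_p(X,\calA,\mu)$, and since the simple functions are closed under differences, it suffices to approximate $f^+$ and $f^-$ separately and subtract, controlling the total error by the triangle inequality for the pseudonorm. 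So assume $f \geq 0$.

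The core argument is a double application of the dominated convergence theorem (Theorem~\ref{dominated_convergence}) via two truncations. \emph{Truncation of the target.} Set $f_N := \min\{f, N\}$. Because $f$ is $T_1$-measurable it is smooth, so $\mu^*(\{x : f(x) > N + \epsilon\}) \to 0$ as $N \to \infty$ for every $\epsilon > 0$, which gives $f_N \xrightarrow{h} f$; moreover $0 \leq f_N \leq f$ with $f = |f| \in L_p(X,\calA,\mu)$. Theorem~\ref{dominated_convergence} then yields $\|f_N - f\|_p \to 0$, so I can fix $N$ with $\|f_N - f\|_p < \epsilon/2$. \emph{Truncation of the approximants.} Let $g_n \xrightarrow{h} f$ be simple functions witnessing $T_1$-measurability of $f$, and put $\phi_N(t) := \min\{\max\{t,0\}, N\}$, a continuous map. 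Then $s_n := \phi_N(g_n)$ is again simple (it equals $\sum_k \phi_N(c_k) I_{A_k}$ when $g_n = \sum_k c_k I_{A_k}$), and by Theorem~\ref{T1_properties}(3) we have $s_n = \phi_N(g_n) \xrightarrow{h} \phi_N(f) = f_N$, using $f \geq 0$. Since $0 \leq s_n \leq N$ and the constant function $N$ lies in $L_p(X,\calA,\mu)$, a second application of Theorem~\ref{dominated_convergence} gives $\|s_n - f_N\|_p \to 0$; fix $n$ with $\|s_n - f_N\|_p < \epsilon/2$. The triangle inequality then gives $\|f - s_n\|_p < \epsilon$, completing the non-negative case, and reassembling $f = f^+ - f^-$ finishes the general case.

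I do not anticipate a serious obstacle; the one point demanding care is that hazy convergence alone does not control the $L_p$ pseudonorm, so the approximating sequence must be brought under an $L_p$-dominating function before Theorem~\ref{dominated_convergence} can be invoked. This is exactly what the two truncations accomplish: truncating the target at height $N$ replaces $f$ by a bounded function at small $L_p$ cost, which is legitimate precisely because smoothness forces the tails $\{f > N\}$ to have vanishing outer charge; and truncating the simple approximants at the same height $N$ keeps them uniformly bounded by the integrable constant $N$ while preserving, through Theorem~\ref{T1_properties}(3), their hazy convergence to $f_N$.
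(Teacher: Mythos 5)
Your proof is correct, but it takes a genuinely different route from the paper, because the paper does not actually prove the substantive case: for $p \in [1,\infty)$ it simply cites Theorem~4.6.15 of the Bhaskara Rao and Bhaskara Rao reference, and only the $p=0$ case is argued (exactly as you argue it: $T_1$-measurability supplies a hazily convergent sequence of simple functions, and hazy convergence coincides with convergence in the pseudometric $d$). Your $p \in [1,\infty)$ argument is a self-contained double-truncation proof built from results the paper already states: reduction to $f \geq 0$ via Lemma~\ref{Lp_pos_neg_parts}, then two applications of Theorem~\ref{dominated_convergence}. Both applications check out. Since $\{x : |f_N(x)-f(x)| > \epsilon\} = \{x : f(x) > N+\epsilon\}$ and $T_1$-measurability implies smoothness, indeed $f_N \xrightarrow{h} f$, with $0 \leq f_N \leq f$ and $f \in L_p(X,\calA,\mu)$ dominating; for the second application, $s_n = \phi_N(g_n)$ is simple, dominated by the constant function $N$ (which lies in $L_p(X,\calA,\mu)$ precisely because the paper's standing assumption is $\mu(X) < \infty$), and $s_n \xrightarrow{h} \phi_N(f) = f_N$ by Theorem~\ref{T1_properties}(3). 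You also correctly identify the one real subtlety, namely that hazy convergence alone does not control $\|\cdot\|_p$, which is why the approximants must be brought under an $L_p$ dominator before Theorem~\ref{dominated_convergence} can fire. What your approach buys is a proof internal to the paper's own toolkit (though note that Theorem~\ref{dominated_convergence} is itself imported from the same reference, so the argument is self-contained relative to the paper, not relative to first principles); what the paper's citation buys is brevity. One small gap of exposition rather than substance: before the first application of Theorem~\ref{dominated_convergence} you should record that each $f_N = \min\{f,N\}$ is $T_1$-measurable, which the theorem requires of the sequence; this is immediate from Theorem~\ref{T1_properties}(2) with the continuous map $t \mapsto \min\{t,N\}$.
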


This is proved for the case $p \in [1,\infty)$ in \cite{bhaskararao1983}. The case $p = 0$ follows from the fact that a sequence of functions $\{f_n\}_{n=1}^{\infty}$ converges hazily to $f$ if and only if $d(f_n,f) \rightarrow 0$ (see Section~1.1 of \cite{basile2000}).

An important tool in the analysis of $L^p$ spaces with respect to a charge is the Peano-Jordan completion (\cite{basile2000, gangopadhyay1990,greco1982}), defined as follows.

\begin{definition} \label{PJ_completion}
Let $(X,\calA,\mu)$ be a charge space. The {\em Peano-Jordan completion} of $\calA$ is the charge space $(X,\overline{\calA},\overline{\mu})$ where
\[
\overline{\calA} := \{ A \subseteq X : \forall \epsilon > 0, \exists B, C \in \calA \mbox{ such that } B \subseteq A \subseteq C \mbox{ and } \mu(C \setminus B) < \epsilon \}
\]
and
\[
\overline{\mu}(A) := \sup \{\mu(B): B \subseteq A, B \in \calA\} = \inf \{\mu(C): A \subseteq C, C \in \calA\}.
\]
A charge space is said to be {\em Peano-Jordan} complete if it is equal to its Peano-Jordan completion. 
\end{definition}

It is straightforward to show $\overline{\calA}$ is a field and $\overline{\mu}$ is a charge. One can also show Peano-Jordan completion is an idempotent operation, that is $\overline{\overline{\calA}} = \overline{\calA}$ and $\overline{\overline{\mu}} = \overline{\mu}$. Two more useful properties are that $\mu_{\calA}^*(A) = \overline{\mu}(A)$ for $A \in \overline{\calA}$, and that $(X,\overline{\calA},\overline{\mu})$ and $(X,\calA,\mu)$ have the same null sets.

If $(X,\calA,\mu)$ and $(X,\calA^{\prime},\mu)$ are charge spaces with $\calA^{\prime} \subseteq \calA$, and $(X,\overline{\calA},\overline{\mu}_{\calA})$ and $(X,\overline{\calA^{\prime}},\overline{\mu}_{\calA^{\prime}})$ are their respective Peano-Jordan completions, then it is straightforward to show $\overline{\calA^{\prime}} \subseteq \overline{\calA}$ and $\overline{\mu}_{\calA^{\prime}}(A) = \overline{\mu}_{\calA}(A)$ for all $A \in \calA^{\prime}$. Thus the subscripts can safely be omitted from $\overline{\mu}_{\calA}$ and $\overline{\mu}_{\calA^{\prime}}$ without ambiguity. However, it is useful to retain the subscripts on the inverse set functions $\overline{\mu}_{\calA}^{-1}$ and $\overline{\mu}_{\calA^{\prime}}^{-1}$, since these are not in general equal. Note also that $\mu^*_{\calA^{\prime}}(A) = \mu^*_{\calA}(A) = \overline{\mu}(A)$ for $A \in \overline{\calA^{\prime}}$, and the subscripts may therefore be omitted without ambiguity in this case as well.

The following lemma establishes that Peano-Jordan completeness generalises the concept of completeness for measure spaces.

\begin{lemma} \label{equivalent_completeness}
Suppose $(X,\calA,\mu)$ is a measure space. Then it is Peano-Jordan complete if and only if it is a complete measure space.
\end{lemma}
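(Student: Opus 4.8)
The plan is to prove the two implications separately, each directly from the definitions, the crucial feature being that in a measure space $\calA$ is a $\sigma$-field and $\mu$ is countably additive.

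First, for the direction ``Peano-Jordan complete $\implies$ complete measure space'', I would take a measurable set $N \in \calA$ with $\mu(N) = 0$ and an arbitrary subset $A \subseteq N$, and show $A \in \calA$. The witnesses $B = \emptyset$ and $C = N$ give $B \subseteq A \subseteq C$ with $\mu(C \setminus B) = 0 < \epsilon$ for every $\epsilon > 0$, so $A \in \overline{\calA}$; since $\overline{\calA} = \calA$ by hypothesis, $A \in \calA$, which is precisely the statement of completeness. This direction requires nothing beyond the definition.

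Second, for the converse, note that $\calA \subseteq \overline{\calA}$ always holds (take $B = C = A$), so it suffices to prove $\overline{\calA} \subseteq \calA$. Given $A \in \overline{\calA}$, I would apply the defining condition with $\epsilon = 1/n$ for each $n \in \N$ to obtain $B_n, C_n \in \calA$ with $B_n \subseteq A \subseteq C_n$ and $\mu(C_n \setminus B_n) < 1/n$. Setting $B := \bigcup_{n} B_n$ and $C := \bigcap_{n} C_n$, both of which lie in $\calA$ because $\calA$ is a $\sigma$-field, yields $B \subseteq A \subseteq C$; and since $C \setminus B \subseteq C_n \setminus B_n$ for every $n$, monotonicity gives $\mu(C \setminus B) \leq 1/n$ for all $n$, hence $\mu(C \setminus B) = 0$. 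Finally, $A \setminus B$ is a subset of the measurable null set $C \setminus B$, so completeness forces $A \setminus B \in \calA$, and therefore $A = B \cup (A \setminus B) \in \calA$.

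The only substantive step is the passage from the family of $\epsilon$-approximations to the exact sandwich $B \subseteq A \subseteq C$ with $\mu(C \setminus B) = 0$: this is where countable additivity, encoded as closure of $\calA$ under countable unions and intersections, is indispensable, and it is then the completeness hypothesis that absorbs the residual set $A \setminus B$ back into $\calA$. In a merely finitely additive setting neither $B$ nor $C$ need exist, which is exactly why Peano-Jordan completeness is the appropriate finitely additive analogue of measure-space completeness, and why I expect this squeezing step to be the crux of the argument.
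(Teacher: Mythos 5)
Your proposal is correct and follows essentially the same route as the paper's own proof: the forward direction uses the witnesses $\emptyset \subseteq A \subseteq N$, and the converse builds $B = \bigcup_n B_n$ and $C = \bigcap_n C_n$ from the $1/n$-approximations, using that $\calA$ is a $\sigma$-field, to squeeze $A$ between sets differing by a null set. Your explicit final step (that $A \setminus B \subseteq C \setminus B$ is absorbed by completeness, so $A = B \cup (A \setminus B) \in \calA$) merely spells out what the paper compresses into ``since the measure space is complete.''
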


\begin{proof}
$(\implies)$ Let $N \in \calA$ such that $\mu(N) = 0$, and consider $A \subset N$. Then $\emptyset \subseteq A \subseteq N$ and $\mu(N \setminus \emptyset) = 0$, implying $A \in \overline{\calA} = \calA$. 

$(\impliedby)$ Consider $A \in \overline{\calA}$. Then there exist $B,C \in \calA$ such that $B \subseteq A \subseteq C$ and $\mu(C \setminus B) = 0$. (To see this, use the definition of $\overline{\calA}$ to construct $B_n,C_n \in \calA$ with $B_n \subseteq A \subseteq C_n$ and $\mu(C_n \setminus B_n) < 1/n$, then set $B := \cup_{i=1}^{\infty} B_n$ and $C := \cap_{i=1}^{\infty} C_n$, noting $\calA$ is closed under countable unions and intersections.) Thus $A \in \calA$, since the measure space is complete. 
\qed \end{proof}



Peano-Jordan completion of a charge space leaves its $L_p$ spaces unchanged, as the following theorem, adapted from Proposition~1.8 of~\cite{basile2000}, states.

\begin{theorem} \label{prop_1.8_basile2000}
Let $(X,\calA,\mu)$ be a charge space and let $(X,\overline{\calA},\overline{\mu})$ be its Peano-Jordan completion. Then 
\begin{enumerate}
\item For $A \subseteq X$, $\mu^*(A) = \overline{\mu}^*(A)$.
\item $A \in \overline{\calA}$ if and only if $I_A \in L_0(X,\calA,\mu)$.
\item $L_0(X,\calA,\mu) = L_0(X, \overline{\calA}, \overline{\mu})$.
\item $L_p(X,\calA,\mu) = L_p(X, \overline{\calA}, \overline{\mu})$ for all $p \in [1,\infty)$.
\item For $f \in L_1(X,\calA,\mu)$, $\int f d\mu = \int f d\overline{\mu}$.
\end{enumerate}
\end{theorem}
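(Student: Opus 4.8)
The plan is to prove the five items in the order 1, 5, 2, 3, 4, since each reuses its predecessors. In every case one direction is immediate from Lemma~\ref{nested_fields} applied with the original $\calA$ in the role of $\calA^{\prime}$ and $\overline{\calA}$ in the role of $\calA$ (the inclusion $\calA \subseteq \overline{\calA}$ holds by taking $B = C = A$ in Definition~\ref{PJ_completion}), so the real work lies in the reverse directions. For item 1, since $\overline{\mu}$ restricts to $\mu$ on $\calA$ and $\calA \subseteq \overline{\calA}$, the infimum defining $\overline{\mu}^*(A)$ ranges over more covering sets, giving $\overline{\mu}^*(A) \leq \mu^*(A)$. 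For the reverse I would fix any $D \in \overline{\calA}$ with $A \subseteq D$ and use $\overline{\mu}(D) = \inf\{\mu(C) : D \subseteq C, C \in \calA\}$ from Definition~\ref{PJ_completion} to produce, for each $\epsilon > 0$, a set $C \in \calA$ with $A \subseteq D \subseteq C$ and $\mu(C) < \overline{\mu}(D) + \epsilon$; then $\mu^*(A) \leq \mu(C)$, and infimising over $D$ and letting $\epsilon \to 0$ gives $\mu^*(A) \leq \overline{\mu}^*(A)$. Item 5 is then immediate, since any $f \in L_1(X,\calA,\mu)$ is integrable with respect to $\calA$, so Lemma~\ref{nested_fields}(4) yields $\int f\, d\mu = \int f\, d\overline{\mu}$.

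Item 2 is the technical heart. The forward direction is easy: if $A \in \overline{\calA}$, choose $B_n, C_n \in \calA$ with $B_n \subseteq A \subseteq C_n$ and $\mu(C_n \setminus B_n) < 1/n$; then for $\epsilon < 1$ we have $\{x : |I_{B_n}(x) - I_A(x)| > \epsilon\} \subseteq C_n \setminus B_n$, so the $\calA$-simple functions $I_{B_n}$ converge hazily to $I_A$ and $I_A \in L_0(X,\calA,\mu)$. For the converse I would start from $\calA$-simple $f_n \xrightarrow{h} I_A$, set $D_n := \{x : f_n(x) > 1/2\} \in \calA$, and observe $A \triangle D_n \subseteq \{x : |f_n - I_A| \geq 1/2\}$, so $\mu^*(A \triangle D_n) \to 0$. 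Given $\epsilon > 0$ I would fix $n$ with $\mu^*(A \triangle D_n) < \epsilon$, cover $A \setminus D_n$ and $D_n \setminus A$ by sets $E, F \in \calA$ of charge $< \epsilon$, and take $B := D_n \setminus F$ and $C := D_n \cup E$; a short set-theoretic check gives $B \subseteq A \subseteq C$ with $C \setminus B \subseteq E \cup F$, so $\mu(C \setminus B) < 2\epsilon$ and hence $A \in \overline{\calA}$.

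With item 2 available, items 3 and 4 follow. For item 3, the inclusion $L_0(X,\calA,\mu) \subseteq L_0(X,\overline{\calA},\overline{\mu})$ is Lemma~\ref{nested_fields}(3); conversely, any $f \in L_0(X,\overline{\calA},\overline{\mu})$ is a hazy limit of $\overline{\calA}$-simple functions, each of which is $T_1$-measurable with respect to $\calA$ by item 2 together with Theorem~\ref{T1_properties}(1), and since item 1 makes the two notions of hazy convergence coincide, Theorem~\ref{T1_properties}(3) gives $f \in L_0(X,\calA,\mu)$. For item 4, $L_p(X,\calA,\mu) \subseteq L_p(X,\overline{\calA},\overline{\mu})$ is Lemma~\ref{nested_fields}(5), and the reverse inclusion is where I expect the main obstacle.

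Given $f \in L_p(X,\overline{\calA},\overline{\mu})$, item 3 already yields $T_1$-measurability with respect to $\calA$, so it remains to show $|f|^p$ is integrable with respect to $\calA$. I would take a determining sequence $\{g_n\}$ of $\overline{\calA}$-simple functions for $|f|^p$ and replace each indicator $I_{A_k}$ (with $A_k \in \overline{\calA}$) by $I_{B_k}$ for a set $B_k \in \calA$, $B_k \subseteq A_k$, with $\overline{\mu}(A_k \setminus B_k)$ as small as desired, available because $\overline{\mu}(A_k) = \sup\{\mu(B) : B \subseteq A_k, B \in \calA\}$. The leftover set $X \setminus \bigcup_k B_k \in \calA$ is absorbed as an extra block with coefficient $0$, keeping the result an $\calA$-simple function $\tilde{g}_n$ and bounding $\int |g_n - \tilde{g}_n|\, d\overline{\mu}$ by $\sum_k |c_k|\, \overline{\mu}(A_k \setminus B_k) < 1/n$. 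A Markov-type estimate from monotonicity of the integral (Theorem~\ref{L1_integrable_properties}(3)) then gives $\tilde{g}_n \xrightarrow{h} |f|^p$, while the triangle inequality shows $\{\tilde{g}_n\}$ is Cauchy in $\int |\cdot|\, d\mu$ (using that the two integrals agree on $\calA$-simple functions by Lemma~\ref{nested_fields}(4)). Hence $\{\tilde{g}_n\}$ is a determining sequence for $|f|^p$ with respect to $\calA$, so $|f|^p$ is $\calA$-integrable and $f \in L_p(X,\calA,\mu)$. The delicate point is precisely this transfer of a determining sequence across the two fields while simultaneously controlling hazy convergence and the $L_1$-Cauchy property; the partition-completion bookkeeping and the Markov estimate are the places where I would be most careful.
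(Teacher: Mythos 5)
Your proof is correct, but there is nothing in the paper to compare it against: Theorem~\ref{prop_1.8_basile2000} sits in Section~\ref{theory_of_charges}, where (as the paper states at the outset of that section) results given without proof are quoted from the references --- here Proposition~1.8 of \cite{basile2000}. So the paper's ``own proof'' is a citation, and your argument is a genuinely self-contained alternative built only from material the paper does state: Definition~\ref{PJ_completion}, Lemma~\ref{nested_fields}, Theorem~\ref{T1_properties}, and Definition~\ref{integrable}. Checking it on its own terms: item~1 correctly exploits the two equivalent expressions for $\overline{\mu}$ and the fact that $\overline{\mu}$ restricts to $\mu$ on $\calA$; item~5 is indeed immediate from Lemma~\ref{nested_fields}(4); both directions of item~2 are sound (in the converse, the level sets $D_n = \{x : f_n(x) > 1/2\}$ do lie in $\calA$, the inclusion $A \triangle D_n \subseteq \{ |f_n - I_A| \geq 1/2 \}$ holds, and the sets $B := D_n \setminus F$, $C := D_n \cup E$ satisfy $B \subseteq A \subseteq C$ with $C \setminus B \subseteq E \cup F$); and the reduction of items~3 and~4 to item~2 works because item~1 makes $\mu^*$ and $\overline{\mu}^*$ agree on all of $\calP(X)$, hence the two notions of hazy convergence literally coincide. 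The delicate step you flagged --- transferring a determining sequence from $\overline{\calA}$ to $\calA$ by shrinking each block $A_k \in \overline{\calA}$ to $B_k \in \calA$ --- closes correctly: the error $|g_n - \tilde{g}_n| = \sum_k |c_k| I_{A_k \setminus B_k}$ has integral controllable via $\overline{\mu}(A_k) = \sup\{\mu(B) : B \subseteq A_k, B \in \calA\}$, the Markov estimate you invoke is elementary for simple functions (where $|h| \geq \epsilon I_{\{|h| > \epsilon\}}$ pointwise between simple functions), and the triangle inequality gives the $L_1$-Cauchy property since the two integrals agree on $\calA$-simple functions. What your route buys is independence from the external reference, at the cost of length; what the paper's route buys is brevity, since \cite{basile2000} proves the statement once and the paper reuses it freely.
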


The above theorem has the following consequence.

\begin{corollary} \label{fI_A}
Let $(X,\calA,\mu)$ be a charge space and $p \in \{0\} \cup [1,\infty)$. Then $f \in L_p(X,\calA,\mu)$ if and only if $f I_A \in L_p(X,\calA,\mu)$ for all $A \in \overline{\calA}$. 
\end{corollary}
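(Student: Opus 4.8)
The plan is to prove the two implications separately; the reverse direction is essentially immediate, and all the content lies in the forward direction. For the reverse implication, I would observe that $X = \emptyset^c \in \calA \subseteq \overline{\calA}$ (every element of $\calA$ lies in $\overline{\calA}$, by taking $B = C = A$ in Definition~\ref{PJ_completion}). Hence $X$ is an admissible choice of $A$ in the hypothesis, and since $f I_X = f$ pointwise, the assumption that $f I_A \in L_p(X,\calA,\mu)$ for all $A \in \overline{\calA}$ immediately yields $f \in L_p(X,\calA,\mu)$.

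For the forward implication, suppose $f \in L_p(X,\calA,\mu)$ and fix an arbitrary $A \in \overline{\calA}$. The first step is to show that $f I_A$ is $T_1$-measurable. By Theorem~\ref{prop_1.8_basile2000}(2), membership $A \in \overline{\calA}$ is equivalent to $I_A \in L_0(X,\calA,\mu)$, that is, to $I_A$ being $T_1$-measurable. Since $f$ is also $T_1$-measurable (by the definition of $L_p$ for $p \in [1,\infty)$, or of $L_0$ for $p = 0$), Theorem~\ref{T1_properties}(1) shows the product $f I_A$ is $T_1$-measurable. This already completes the case $p = 0$, where $L_0(X,\calA,\mu)$ is precisely the class of $T_1$-measurable functions.

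For $p \in [1,\infty)$ it remains to verify that $| f I_A |^p$ is integrable. The key inequality is the pointwise bound $| f I_A |^p = | f |^p I_A \leq | f |^p$, which holds everywhere because $I_A$ takes values in $\{0,1\}$. Now $| f |^p$ is integrable since $f \in L_p(X,\calA,\mu)$, and $| f I_A |^p$ is $T_1$-measurable by Theorem~\ref{T1_properties}(2) (composing the $T_1$-measurable function $f I_A$ with the continuous map $t \mapsto | t |^p$). Applying Theorem~\ref{dominated_integrability} with dominating function $| f |^p$ then gives that $| f I_A |^p$ is integrable, so $f I_A \in L_p(X,\calA,\mu)$, as required.

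I do not anticipate a genuine obstacle. The only non-mechanical step is recognising that the Peano-Jordan completion enters solely through the characterisation of $\overline{\calA}$ as exactly the sets whose indicators are $T_1$-measurable (Theorem~\ref{prop_1.8_basile2000}(2)); once this link is made, the statement reduces to the already-established closure properties of $T_1$-measurability together with the dominated-integrability criterion.
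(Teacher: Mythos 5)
Your proof is correct and follows essentially the same route as the paper: $I_A$ is $T_1$-measurable by Theorem~\ref{prop_1.8_basile2000}, the product $fI_A$ is $T_1$-measurable by Theorem~\ref{T1_properties} (settling $p=0$), Theorem~\ref{dominated_integrability} handles $p\in[1,\infty)$ via the bound $|fI_A|^p \le |f|^p$, and the converse is immediate from $X \in \overline{\calA}$. Your write-up merely makes explicit the domination inequality and the measurability of $|fI_A|^p$, which the paper leaves implicit.
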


\begin{proof}
Suppose $f \in L_p(X,\calA,\mu)$ and $A \in \overline{\calA}$. By Theorem~\ref{prop_1.8_basile2000}, $I_A$ is $T_1$-measurable, and by Theorem~\ref{T1_properties}, $f I_A$ is $T_1$-measurable. This is all that is required for the case $p=0$. For $p \in [1,\infty)$, Theorem~\ref{dominated_integrability} gives $| f I_A |^p \in L_1(X,\calA,\mu)$, that is, $f I_A \in L_p(X,\calA,\mu)$.

The converse is trivial, since $X \in \overline{\calA}$.
\qed \end{proof}

One final theorem, useful for showing that an $L_p$ space is complete, is the following, adapted from Theorem~3.4 of~\cite{basile2000}.

\begin{theorem} \label{Lp_complete}
Let $(X,\calA,\mu)$ be a charge space and $p \in [1,\infty)$. The following statements are logically equivalent.
\begin{enumerate}
\item $L_p(X,\calA,\mu)$ is complete.
\item $L_0(X,\calA,\mu)$ is complete.
\item If $A_1 \subseteq A_2 \subseteq \ldots$ are sets in $\calA$ such that $\sup_{k \geq 1} \mu(A_k) < \infty$ and $\epsilon > 0$, there exists $D \in \calA$ such that $\mu(A_k \setminus D) = 0$ for all $k \geq 1$ and $\lim_{k \rightarrow \infty} \mu(A_k) \leq \mu(D) \leq \lim_{k \rightarrow \infty} \mu(A_k) + \epsilon$.
\end{enumerate}
\end{theorem}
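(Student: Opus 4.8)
The plan is to establish the cycle of implications $(2) \Rightarrow (3) \Rightarrow (1) \Rightarrow (2)$. The unifying idea is that completeness need only be tested on two tractable families of Cauchy sequences: monotone sequences of indicator functions $I_{A_k}$, which link directly to condition (3), and truncations of a general sequence to a bounded range, where hazy convergence and $\|\cdot\|_p$-convergence coincide by Theorem~\ref{dominated_convergence}.

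First I would prove $(2) \Rightarrow (3)$. Given an increasing sequence $A_1 \subseteq A_2 \subseteq \cdots$ in $\calA$ with $\sup_k \mu(A_k) < \infty$, the reals $\mu(A_k)$ increase and are bounded, so $\mu(A_m \setminus A_k) = \mu(A_m) - \mu(A_k) \to 0$ as $k,m \to \infty$; hence $\{I_{A_k}\}$ is Cauchy in the pseudometric $d$. By (2) it converges hazily to some $h \in L_0(X,\calA,\mu)$. Applying Theorem~\ref{T1_properties}(3) to the continuous map $t \mapsto \mathrm{dist}(t, \{0,1\})$, together with the uniqueness clause of Theorem~\ref{hazy_uniqueness}, shows $h \in \{0,1\}$ a.e., so $h = I_E$ a.e. for $E := \{x : h(x) \geq 1/2\}$, and $E \in \overline{\calA}$ by Theorem~\ref{prop_1.8_basile2000}(2). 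Since $I_{A_k} \leq I_{A_m}$ for $m \geq k$ and $I_{A_m} \to I_E$ hazily, Lemma~\ref{limit_dominated} yields $I_{A_k} \leq I_E$ a.e., i.e.\ $\mu^*(A_k \setminus E) = 0$. As the indicators are dominated by $1$, Theorem~\ref{dominated_convergence} upgrades the convergence to $L_1$ and gives $\lim_k \mu(A_k) = \overline{\mu}(E) = \mu^*(E)$. Finally, the definition of $\overline{\calA}$ supplies $B, C \in \calA$ with $B \subseteq E \subseteq C$ and $\mu(C \setminus B) < \epsilon$; setting $D := C$ makes $A_k \setminus D \subseteq A_k \setminus E$ null, while $\lim_k \mu(A_k) = \overline{\mu}(E) \leq \mu(C) = \mu(D) \leq \mu(B) + \epsilon \leq \lim_k \mu(A_k) + \epsilon$, as required.

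Next I would prove $(1) \Rightarrow (2)$ using the bounded homeomorphism $\phi(t) := t/(1+|t|)$ of $\R$ onto $(-1,1)$. The first step is to note that any sequence $\{f_n\}$ Cauchy in $d$ is \emph{uniformly smooth}, in the sense that $\sup_n \mu^*(\{|f_n| > K\}) \to 0$ as $K \to \infty$; this follows by comparing each $f_n$ with a fixed late term via subadditivity of $\mu^*$ and invoking smoothness of the finitely many early terms. Since $\phi$ is $1$-Lipschitz and bounded, $\{\phi(f_n)\}$ is uniformly bounded and Cauchy in $d$, hence Cauchy in $\|\cdot\|_p$ (these conditions coincide for uniformly bounded sequences, by Theorem~\ref{dominated_convergence}); by (1) it converges in $L_p$, and so hazily, to some $g$ with $|g| \leq 1$ a.e. Uniform smoothness forces $\mu^*(\{|g| = 1\}) = 0$, because $\{|\phi(f_n)| > 1 - \eta\} = \{|f_n| > (1-\eta)/\eta\}$. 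Then $f := \phi^{-1}(g)$ is real-valued a.e., and writing it as a hazy limit of the $T_1$-measurable functions $\psi_\eta(g)$, where $\psi_\eta$ is a continuous truncation of $\phi^{-1}$, shows $f \in L_0(X,\calA,\mu)$ via Theorem~\ref{T1_properties}(3). A final argument, again using uniform smoothness to dispatch the tails and uniform continuity of $\phi^{-1}$ on compacta, recovers $f_n \to f$ hazily, establishing (2).

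It remains to prove $(3) \Rightarrow (1)$, which I expect to be the \emph{main obstacle}. Given a sequence Cauchy in $\|\cdot\|_p$, I would pass to a subsequence with $\|f_{n_{k+1}} - f_{n_k}\|_p < 2^{-k}$ and set $u_k := |f_{n_{k+1}} - f_{n_k}|$, so the partial sums $G_N := \sum_{k=1}^N u_k$ form an increasing sequence of nonnegative $L_p$ functions with $\sup_N \|G_N\|_p < \infty$. Completeness then reduces to a monotone convergence principle: such a sequence should converge in $L_p$ to its pointwise supremum $G$, after which $f_{n_k}$ converges in $L_p$ and the full Cauchy sequence with it. The heart of the matter, and the only place condition (3) is needed, is showing $G \in L_p(X,\calA,\mu)$, i.e.\ that the pointwise sup is finite a.e.\ and $p$-integrable; here countable additivity is unavailable, so I would apply (3) to the increasing families of super-level sets $\{G_N > t\}$ (passing to $\overline{\calA}$ via Theorem~\ref{prop_1.8_basile2000} so these lie in the relevant field, with charges bounded by $\sup_N \|G_N\|_p^p / t^p$) to produce limiting sets capturing the level sets up to null sets with nearly the correct charge, and combine these through a layer-cake estimate $\|G_N\|_p^p = p\int_0^\infty t^{p-1} \mu(\{G_N > t\})\,dt$ to bound $\|G\|_p$ and identify $G$ a.e.\ with a genuine $L_p$ function. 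The delicate points are justifying the layer-cake representation and the charge estimates for the assembled limit in the finitely additive setting, and checking that (3), stated for a single increasing sequence, applies uniformly enough across the continuum of thresholds $t$; this is the step I would expect to require the most care.
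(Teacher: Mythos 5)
Your proposal cannot be checked against an internal argument: the paper states Theorem~\ref{Lp_complete} without proof, citing it as an adaptation of Theorem~3.4 of \cite{basile2000} (Section~\ref{theory_of_charges} explicitly defers such proofs to the references). Judged on its own merits, your implications $(2)\Rightarrow(3)$ and $(1)\Rightarrow(2)$ are essentially sound and can be completed with the paper's toolkit (Lemma~\ref{limit_dominated}, Theorems~\ref{T1_properties}, \ref{hazy_uniqueness}, \ref{dominated_convergence}, \ref{prop_1.8_basile2000}); the only imprecision worth flagging is that ``uniformly bounded $+$ $d$-Cauchy implies $\|\cdot\|_p$-Cauchy'' is not literally an application of Theorem~\ref{dominated_convergence} but a direct estimate, $\int |h|^p d\mu \leq \epsilon^p \mu(X) + 2^p\bigl(\mu^*(\{|h|>\epsilon\})+\delta\bigr)$, obtained by covering $\{|h|>\epsilon\}$ with a set of $\calA$.

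The genuine gap is in $(3)\Rightarrow(1)$, and it is not a matter of ``care'': the object you propose as the limit is the wrong one. In a charge space the pseudonorm is blind to pointwise behaviour, so a rapidly convergent subsequence gives no pointwise control, and your increasing sequence $G_N$ need not converge to its pointwise supremum; the supremum can be identically $+\infty$ while $G_N \rightarrow 0$ in $L_p$. Concretely, take the space $(\N,\calA,\nu)$ from the paper's introduction ($\calA$ the finite--cofinite field, $\nu = 0$ on finite sets and $1$ on cofinite sets). This space satisfies condition~(3): given $A_1 \subseteq A_2 \subseteq \cdots$ in $\calA$, either every $A_k$ is finite, and $D := \emptyset$ works, or some $A_K$ is cofinite, and $D := A_K$ works because $A_k \setminus A_K \subseteq A_K^c$ is finite for $k \geq K$. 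Now let $f_n := \sum_{k=1}^{n} I_{\{1,\ldots,k\}}$. Every difference $f_m - f_n$ is supported on a finite set, so $\|f_m - f_n\|_1 = 0$: the sequence is Cauchy, any subsequence satisfies your $2^{-k}$ condition, and $f_n \rightarrow 0$ in $L_1$. Yet $u_k = I_{\{1,\ldots,k+1\}}$ gives $G_N(x) \rightarrow \infty$ for \emph{every} $x \in \N$, and $\{G = \infty\} = \N$ has outer charge $1$. So the statement your whole reduction rests on --- ``the pointwise sup is finite a.e.\ and is the $L_p$ limit'' --- is false even in spaces where (1) holds, and no layer-cake estimate can rescue it; the classical Riesz--Fischer scaffolding of a.e.\ pointwise limits simply does not transfer to finitely additive measures. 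A workable repair must abandon pointwise limits altogether and build the candidate limit from charge data: reduce by density to simple approximants (Theorem~\ref{simple_dense}), apply (3) to monotone sequences of their level sets (which lie in $\calA$), extract a suitably nested family of sets indexed by a countable dense set of thresholds, and reconstruct a function from that chain in the spirit of Lemma~\ref{function_chain_equivalence} and Theorem~\ref{T1measurable_characterisation}, verifying convergence in charge rather than pointwise; this construction is the substance of the cited Theorem~3.4 of \cite{basile2000}.
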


\section{$T_1$-measurability, integrability, and equality almost everywhere}
\label{characterisations}

This section presents a contribution to the general theory of bounded charges. New characterisations of $T_1$-measurability, integrability, and equality almost everywhere of functions on a charge space are deduced. Among other corollaries, it follows that $L_p(X,\calA,\mu)$ spaces defined as in Section~\ref{theory_of_charges} are equivalent to conventional Lebesgue function spaces when $(X,\calA,\mu)$ is a complete measure space (Corollaries~\ref{measurable_fn} and~\ref{integrable_fn}). The latter result is known, or at least generally assumed, but proof seems to be absent from the literature.

The following two lemmas are fundamental to the results in this section.

\begin{lemma} \label{atomimpliesrayinverseimage}
Consider $f \in L_0(X,\calA,\mu)$, where $(X,\calA,\mu)$ is a charge space. Then for all $y \in \R$ such that $\mu^*(f^{-1}[y-\delta,y+\delta]) \rightarrow 0$ as $\delta \rightarrow 0$, $f^{-1}(-\infty,y] \in \overline{\calA}$ and $f^{-1}[y,\infty) \in \overline{\calA}$. Moreover, $f^{-1}(y) \in \overline{\calA}$ with $\overline{\mu}(f^{-1}(y)) = 0$.
\end{lemma}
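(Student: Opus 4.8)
The plan is to reduce each ray-membership claim to a statement about $T_1$-measurability of an indicator function, exploiting Theorem~\ref{prop_1.8_basile2000}(2): a set $A$ lies in $\overline{\calA}$ precisely when $I_A \in L_0(X,\calA,\mu)$. Thus it suffices to show that $I_{f^{-1}(-\infty,y]}$ and $I_{f^{-1}[y,\infty)}$ are $T_1$-measurable. The idea is to realise each indicator as a hazy limit of continuous functions applied to $f$, since Theorem~\ref{T1_properties}(2) guarantees that $\psi(f)$ is $T_1$-measurable whenever $\psi$ is continuous, and Theorem~\ref{T1_properties}(3) guarantees that a hazy limit of $T_1$-measurable functions is again $T_1$-measurable.

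For the first ray I would take, for each $n \in \N$, the piecewise-linear continuous function $\psi_n : \R \to \R$ equal to $1$ on $(-\infty,y]$, equal to $0$ on $[y+1/n,\infty)$, and linear in between, so that $\psi_n(f) \in L_0(X,\calA,\mu)$. The key step is to verify $\psi_n(f) \xrightarrow{h} I_{f^{-1}(-\infty,y]}$. Here $\psi_n(f(x))$ and $I_{f^{-1}(-\infty,y]}(x)$ agree except on the transition region $\{ x : y < f(x) < y + 1/n \}$, which is contained in $f^{-1}[y-1/n,\,y+1/n]$. Hence for every $\epsilon > 0$,
\[
\mu^*\bigl(\{ x : |\psi_n(f(x)) - I_{f^{-1}(-\infty,y]}(x)| > \epsilon \}\bigr) \le \mu^*\bigl(f^{-1}[y-1/n,\,y+1/n]\bigr),
\]
using monotonicity of the outer charge. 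By hypothesis the right-hand side tends to $0$ as $n \to \infty$ (taking $\delta = 1/n$), establishing hazy convergence; Theorem~\ref{T1_properties}(3) then yields $I_{f^{-1}(-\infty,y]} \in L_0(X,\calA,\mu)$, so $f^{-1}(-\infty,y] \in \overline{\calA}$. The ray $f^{-1}[y,\infty)$ is handled symmetrically, using the reflected ramp that is $0$ on $(-\infty,y-1/n]$ and $1$ on $[y,\infty)$.

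For the level set I would note $f^{-1}(y) = f^{-1}(-\infty,y] \cap f^{-1}[y,\infty)$, which lies in $\overline{\calA}$ because $\overline{\calA}$ is a field and hence closed under finite intersection. For the charge bound, since $f^{-1}(y) \subseteq f^{-1}[y-\delta,\,y+\delta]$ for every $\delta > 0$, monotonicity of $\mu^*$ together with the identity $\overline{\mu}(A) = \mu^*(A)$ for $A \in \overline{\calA}$ gives $\overline{\mu}(f^{-1}(y)) = \mu^*(f^{-1}(y)) \le \mu^*(f^{-1}[y-\delta,\,y+\delta])$; letting $\delta \to 0$ forces $\overline{\mu}(f^{-1}(y)) = 0$.

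I expect the main obstacle to be the verification of hazy convergence, specifically correctly locating the transition region on which the continuous approximant $\psi_n(f)$ disagrees with the indicator, and bounding its outer charge by the hypothesised quantity $\mu^*(f^{-1}[y-\delta,\,y+\delta])$. This is the single point where the hypothesis on $y$ is actually consumed; everything else (continuity of $\psi_n$, closure of $\overline{\calA}$ under intersection, and the identity $\overline{\mu} = \mu^*$ on $\overline{\calA}$) is routine once that estimate is in hand.
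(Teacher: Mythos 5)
Your proof is correct, but it takes a genuinely different route from the paper's. The paper works directly from the definition of the Peano--Jordan completion: it invokes $T_2$-measurability (equivalent to $T_1$-measurability) to get a finite partition on which $f$ has small oscillation, and from that partition explicitly constructs sets $B, C \in \calA$ with $B \subseteq f^{-1}(-\infty,y] \subseteq C$ and $\mu(C \setminus B) < \epsilon$, the hypothesis on $y$ being used to cover the transition region $f^{-1}[y-\delta,y+\delta]$ by a set $D \in \calA$ of small charge. You instead reduce membership in $\overline{\calA}$ to $T_1$-measurability of the indicator via Theorem~\ref{prop_1.8_basile2000}(2), and obtain that measurability by exhibiting $I_{f^{-1}(-\infty,y]}$ as a hazy limit of the continuous ramps $\psi_n(f)$, using Theorem~\ref{T1_properties}(2) and (3); your identification of the disagreement set with the transition region $\{x : y < f(x) < y + 1/n\} \subseteq f^{-1}[y-1/n,y+1/n]$, and the resulting outer-charge bound, is exactly right, and the level-set argument is the same in both proofs. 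What each approach buys: yours is shorter and leans on the general machinery already catalogued in Section~2, avoiding any hands-on partition bookkeeping; the paper's is self-contained at the level of definitions and, as the author remarks immediately after the proof, does not require $\mu$ to be bounded, whereas your route passes through Theorem~\ref{prop_1.8_basile2000}, which is stated under the paper's standing boundedness assumption, so it would need additional justification in the unbounded setting.
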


\begin{proof}
Suppose $y \in \R$ satisfies the condition of the lemma, fix $\epsilon > 0$ and select $\delta > 0$ and $D \in \calA$ such that $f^{-1}[y-\delta, y+\delta] \subseteq D$ with $\mu(D) < \epsilon/2$. Since $f$ is $T_2$-measurable, there exists a partition of $X$ into sets $\{ A_0, A_1, \ldots, A_K \} \subseteq \calA$ such that $\mu(A_0) < \epsilon/2$ and $|f(x_1) - f(x_2)| < 2\delta$ for every $x_1, x_2 \in A_k$ and $k \in \{ 1, \ldots, K \}$. For each $k \in \{ 1, \ldots, K \}$, exactly one of the following is true:
\begin{enumerate}
\item $A_k \subseteq f^{-1}[y-\delta,y+\delta]$, 
\item $A_k \cap f^{-1}(-\infty,y-\delta) \neq \emptyset$, or 
\item $A_k \cap f^{-1}(y+\delta,\infty) \neq \emptyset$.
\end{enumerate} 
Define 
\[
B := \bigcup \{ A_k : k \in \{ 1, \ldots, K \}, A_k \cap f^{-1}(-\infty,y-\delta) \neq \emptyset \} \setminus D
\]
and \[
C^c := \bigcup \{ A_k : k \in \{ 1, \ldots, K \}, A_k \cap f^{-1}(y+\delta,\infty) \neq \emptyset \} \setminus D.
\]
Then $B,C \in \calA$ with $B \subseteq f^{-1}(-\infty,y] \subseteq C$ and $C \setminus B = (B \cup C^c)^c \subseteq A_0 \cup D$. Hence $\mu(C \setminus B) \leq \mu(A_0) + \mu(D) < \epsilon$, and $f^{-1}(-\infty,y] \in \overline{\calA}$. The proof that $f^{-1}[y,\infty) \in \overline{\calA}$ is similar.

Fix $\epsilon > 0$ and choose $\delta > 0$ such that $\mu^*(f^{-1}[y-\delta,y+\delta]) < \epsilon$. Then $f^{-1}(y) = f^{-1}(-\infty,y] \cap f^{-1}[y,\infty) \in \overline{\calA}$ with $\overline{\mu}(f^{-1}(y)) \leq \mu^*(f^{-1}[y-\delta,y+\delta]) < \epsilon$. Let $\epsilon \rightarrow 0$ to obtain $\overline{\mu}(f^{-1}(y)) = 0$.
\qed \end{proof}

Note the proof of Lemma~\ref{atomimpliesrayinverseimage} does not require $\mu$ to be bounded.



\begin{lemma} \label{countableatoms}
Consider $f \in L_0(X,\calA,\mu)$, where $(X,\calA,\mu)$ is a charge space. There are at most countably many $y \in \R$ such that $\lim _{\delta \rightarrow 0} \mu^*(f^{-1}[y-\delta,y+\delta]) > 0$. 
\end{lemma}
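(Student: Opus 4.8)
The plan is to fix $n \in \N$ and show that the set $S_n := \{ y \in \R : L(y) > 1/n \}$ is finite, where I write $L(y) := \lim_{\delta \to 0} \mu^*(f^{-1}[y-\delta,y+\delta])$; since $\{ y : L(y) > 0 \} = \bigcup_{n=1}^{\infty} S_n$, a countable union of finite sets is countable, which gives the claim. A preliminary observation is that $\delta \mapsto \mu^*(f^{-1}[y-\delta,y+\delta])$ is non-decreasing in $\delta$ by monotonicity of $\mu^*$, so the limit defining $L(y)$ exists, equals $\inf_{\delta > 0} \mu^*(f^{-1}[y-\delta,y+\delta])$, and in particular $L(y) \le \mu^*(f^{-1}[y-\delta,y+\delta])$ for every $\delta > 0$.

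The main obstacle is that $\mu^*$ is only subadditive, not additive: as the cofinite charge space discussed after Definition~\ref{equal_ae} shows, disjoint sets can each carry the full outer charge $\mu(X)$. Consequently one cannot simply take small disjoint neighbourhoods of distinct bad points and argue that their preimages exhaust the total charge. To restore additivity I would invoke the $T_2$-measurability of $f$ (equivalent to $T_1$-measurability). Suppose for contradiction that $S_n$ contains distinct points $y_1 < \cdots < y_p$ with $p > 2n\mu(X)$, let $d > 0$ be their minimum pairwise gap, and apply $T_2$-measurability with parameter $\eta < \min\{ d/3,\, 1/(2n) \}$ to obtain a finite partition $\{ A_0, A_1, \ldots, A_K \} \subseteq \calA$ with $\mu(A_0) < \eta$ and $f$ oscillating by less than $\eta$ on each $A_j$ for $j \ge 1$.

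The key step is to assign to each $y_i$ the index set $J(y_i) := \{ j \ge 1 : f(A_j) \cap [y_i - \eta, y_i + \eta] \ne \emptyset \}$ and to verify two facts. First, because each $f(A_j)$ with $j \ge 1$ has diameter less than $\eta$ while the $y_i$ are separated by more than $3\eta$, a block meeting two of the windows $[y_i-\eta,y_i+\eta]$ would force those centres within $3\eta$; hence the $J(y_i)$ are pairwise disjoint. Second, any $x$ with $f(x) \in [y_i-\eta,y_i+\eta]$ lies either in $A_0$ or in some $A_j$ with $j \in J(y_i)$, so $f^{-1}[y_i - \eta, y_i + \eta] \subseteq A_0 \cup \bigcup_{j \in J(y_i)} A_j$. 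Using $\mu^* = \mu$ on $\calA$ and finite additivity of $\mu$, this yields
\[
1/n < L(y_i) \le \mu^*(f^{-1}[y_i-\eta, y_i+\eta]) \le \mu(A_0) + \sum_{j \in J(y_i)} \mu(A_j) < \eta + \sum_{j \in J(y_i)} \mu(A_j),
\]
whence $\sum_{j \in J(y_i)} \mu(A_j) > 1/n - \eta > 1/(2n)$.

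Finally I would sum over $i$. Since the $J(y_i)$ are disjoint subsets of $\{1, \ldots, K\}$,
\[
\frac{p}{2n} < \sum_{i=1}^{p} \sum_{j \in J(y_i)} \mu(A_j) \le \sum_{j=1}^{K} \mu(A_j) \le \mu(X),
\]
contradicting $p > 2n\mu(X)$. Hence $|S_n| \le 2n\mu(X) < \infty$ for every $n$, completing the argument. The delicate points to get right are the simultaneous choice of $\eta$ below both the separation $d/3$ and the threshold $1/(2n)$, and the diameter estimate forcing disjointness of the $J(y_i)$; the overarching idea is to convert a question about the non-additive $\mu^*$ into one governed by the genuinely additive charge $\mu$ on the fixed finite partition supplied by $T_2$-measurability.
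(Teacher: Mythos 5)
Your proof is correct and follows essentially the same strategy as the paper's: bound the number of points at each threshold level by contradiction, using $T_2$-measurability to obtain a finite partition with small oscillation, associating to each bad point $y_i$ the (pairwise disjoint, by the separation-versus-oscillation argument) union of partition blocks meeting a small window around $y_i$, and summing their charges to exceed $\mu(X)$. The only differences are cosmetic — you parametrise the level sets by threshold $1/n$ with cardinality bound $2n\mu(X)$, whereas the paper uses threshold $\mu(X)/n$ with cardinality bound $n$.
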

\begin{proof}
For any $n \in \N$, suppose there exist distinct $\{ y_1, \ldots, y_n \} \subset \R$ such that 
\[
\lim _{\delta \rightarrow 0} \mu^*(f^{-1}[y_i-\delta,y_i+\delta]) > \frac{\mu(X)}{n}
\] 
for each $i \in \{1, \ldots, n \}$. Choose $\delta > 0$ with $\delta < \min \{ | y_i - y_j | : i,j \in \{1, \ldots, n \} \}$
and note $\mu^*(f^{-1}[y_i-\delta/4,y_i+\delta/4]) > \mu(X)/n$ for each $i$. Then choose $\epsilon > 0$ such that $\mu^*(f^{-1}[y_i-\delta/4,y_i+\delta/4]) > \mu(X)/n + \epsilon$ for each $i$.

Since $f$ is $T_2$-measurable, there exists a partition $\{ A_0, A_1, \ldots, A_K \} \subseteq \calA$ of $X$ such that $\mu(A_0) < \epsilon$ and $| f(x_1) - f(x_2) | < \delta/4$ for all $x_1, x_2 \in A_j$ and all $j \in \{ 1, \ldots, K \}$. For each $i \in \{1, \ldots, n \}$, define 
\[
B_i := \bigcup \{ A_j : k \in \{ 1, \ldots, K \}, A_j \cap f^{-1}[y_i-\delta/4,y_i+\delta/4] \neq \emptyset \}.
\]
Then $B_i \in \calA$ with $f^{-1}[y_i-\delta/4,y_i+\delta/4] \subseteq B_i \cup A_0$, and hence 
\[
\mu(B_i) \geq \mu^*(f^{-1}[y_i-\delta/4,y_i+\delta/4]) - \mu(A_0) > \mu(X)/n.
\]
Moreover, $B_i \subseteq f^{-1}[y_i-\delta/2, y_i+\delta/2]$ for each $i$, implying $\{ B_i \}_{i=1}^n$ is a collection of pairwise disjoint sets in $\calA$, and thus $\mu(\cup_{i=1}^n B_i) = \sum_{i=1}^n \mu(B_i) > \mu(X)$, a contradiction. It follows there can be at most finitely many (fewer than $n$) elements $y \in \R$ with $\lim _{\delta \rightarrow 0} \mu^*(f^{-1}[y-\delta, y+\delta]) > \mu(X)/n$, and hence at most countably many $y \in \R$ with $\lim _{\delta \rightarrow 0} \mu^*(f^{-1}[y-\delta, y+\delta]) > 0$.
\qed \end{proof}

Putting the above two lemmas together gives that $T_1$-measurable functions produce chains of inverse images in $\overline{\calA}$, after excluding a countable set. As the following lemma states, that countable set may be chosen to correspond precisely with the sets at which $\overline{\mu}$ is in a certain sense discontinuous on the chain.

\begin{lemma} \label{countableexceptions}
Consider $f \in L_0(X,\calA,\mu)$, where $(X,\calA,\mu)$ is a charge space. There exists a countable set $C \subset \R$ such that
\begin{enumerate}
\item $f^{-1}(y,\infty) \in \overline{\calA}$ for all $y \in \R \setminus C$, and
\item $y \in \R \setminus C$ if and only if 
\begin{eqnarray*}
\mu^*(f^{-1}(y,\infty)) &=& \sup \{ \overline{\mu}(A) : A \in \calT, f^{-1}(y,\infty) \supset A \} \\
&=& \inf \{ \overline{\mu}(A) : A \in \calT, f^{-1}(y,\infty) \subset A \}
\end{eqnarray*}
\end{enumerate}
where $\calT := \{ f^{-1}(y,\infty) : y \in \R \setminus C \}$. 
\end{lemma}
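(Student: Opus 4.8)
The plan is to exhibit the set explicitly as
\[
C := \{ y \in \R : f^{-1}(y,\infty) \notin \overline{\calA} \},
\]
and to show that this single set witnesses both claims. Claim (1) then holds by the very definition of $C$, so the work is to establish countability and the equivalence in (2). For countability I would lean on the two preceding lemmas: if $\lim_{\delta \to 0} \mu^*(f^{-1}[y-\delta,y+\delta]) = 0$ (this one-sided limit exists since $\mu^*(f^{-1}[y-\delta,y+\delta])$ is monotone in $\delta$), then Lemma~\ref{atomimpliesrayinverseimage} gives $f^{-1}[y,\infty) \in \overline{\calA}$ and $f^{-1}(y) \in \overline{\calA}$, so that $f^{-1}(y,\infty) = f^{-1}[y,\infty) \setminus f^{-1}(y) \in \overline{\calA}$ because $\overline{\calA}$ is a field, whence $y \notin C$. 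Contrapositively, $C$ is contained in $\{ y : \lim_{\delta \to 0}\mu^*(f^{-1}[y-\delta,y+\delta]) > 0 \}$, which is countable by Lemma~\ref{countableatoms}.

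For the forward implication of (2), recall $\calT = \{ f^{-1}(y',\infty) : y' \in \R \setminus C \} \subseteq \overline{\calA}$ and that $\mu^*(A) = \overline{\mu}(A)$ for $A \in \overline{\calA}$. If $y \in \R \setminus C$, then $f^{-1}(y,\infty)$ itself lies in $\calT$; being both a subset and a superset of itself, it is admissible in each of the two optimisations, while every competitor $A$ satisfies $\overline{\mu}(A) = \mu^*(A) \le \mu^*(f^{-1}(y,\infty))$ in the supremum (by monotonicity of the outer charge, since $A \subseteq f^{-1}(y,\infty)$) and $\overline{\mu}(A) \ge \mu^*(f^{-1}(y,\infty))$ in the infimum. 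Hence both the supremum and the infimum equal $\mu^*(f^{-1}(y,\infty))$, as required.

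The converse implication is where the real work lies, and I expect it to be the main obstacle. Assuming the two displayed equalities hold at $y$, I would extract sequences $A_n, B_n \in \calT$ with $A_n \subseteq f^{-1}(y,\infty) \subseteq B_n$ and with both $\overline{\mu}(A_n)$ and $\overline{\mu}(B_n)$ tending to $\mu^*(f^{-1}(y,\infty))$. Since $A_n \subseteq B_n$ both lie in the field $\overline{\calA}$, additivity gives $\overline{\mu}(B_n \setminus A_n) = \overline{\mu}(B_n) - \overline{\mu}(A_n) \to 0$. Thus $f^{-1}(y,\infty)$ is trapped between members of $\overline{\calA}$ whose difference has $\overline{\mu}$-charge tending to zero, which is exactly the defining condition for membership in the Peano--Jordan completion of $\overline{\calA}$; invoking idempotency, $\overline{\overline{\calA}} = \overline{\calA}$, yields $f^{-1}(y,\infty) \in \overline{\calA}$, i.e. $y \in \R \setminus C$. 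This also shows that $C$ is precisely the set of $y$ at which $\mu^*(f^{-1}(y,\infty))$ cannot be squeezed between nearby chain values, matching the informal notion of ``discontinuity on the chain'' described above. The two delicate points to verify are that the containments appearing in the supremum and infimum are read non-strictly (so that $f^{-1}(y,\infty)$ is itself admissible, as used in the forward direction) and that the completion is being applied correctly to the already-completed field $\overline{\calA}$.
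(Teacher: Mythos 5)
Your proof is correct under the non-strict reading of the containments that you flag, but it takes a genuinely different route from the paper's, and the two routes prove the statement with different witnesses $C$. The paper takes $C := \{ y \in \R : \lim_{\delta \rightarrow 0} \mu^*(f^{-1}[y-\delta,y+\delta]) > 0 \}$, which is countable by Lemma~\ref{countableatoms}, and obtains claim (1) from Lemma~\ref{atomimpliesrayinverseimage} essentially as in your countability step; but it then proves \emph{both} directions of claim (2) by $\epsilon$--$\delta$ arguments that move along the parameter: for $y \notin C$ it picks $y' \in (y,y+\delta) \setminus C$ and bounds $\overline{\mu}(f^{-1}(y,\infty)) - \overline{\mu}(f^{-1}(y',\infty)) = \mu^*(f^{-1}(y,y']) \leq \mu^*(f^{-1}[y-\delta,y+\delta]) < \epsilon$, and conversely it rebuilds the limit condition from the approximation hypothesis. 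You instead take $C := \{ y : f^{-1}(y,\infty) \notin \overline{\calA} \}$, which makes the forward direction of (2) a tautology (self-admissibility), and you carry all the content in the converse, proved by sandwiching $f^{-1}(y,\infty)$ between chain elements of $\overline{\calA}$ with charge gap tending to zero and invoking idempotency $\overline{\overline{\calA}} = \overline{\calA}$. That converse argument is sound and is cleaner than the paper's.

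The difference in witnesses is not cosmetic, and it sits exactly at the delicate point you identified. Distinct parameters can produce the same set: for $f = I_A$ with $A \in \calA$ and $0 < \mu(A) < \mu(X)$, every $y' \in (0,1)$ gives $f^{-1}(y',\infty) = A$. Consequently $\calT$ contains $f^{-1}(0,\infty) = A$ itself even though $y = 0$ lies in the paper's $C$ (the limit there is $\mu(A^c) > 0$), so under the literal set-containment reading the sup/inf conditions hold at $y=0$ and the ``only if'' half of (2) fails for the \emph{paper's} witness; the paper's converse tacitly assumes that a chain element containing $f^{-1}(y,\infty)$ of nearly minimal charge arises from a parameter $y' < y$, which this example violates. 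In other words, the paper's proof really establishes continuity of $\overline{\mu}$ along the chain as a function of the parameter (matching the prose preceding the lemma), whereas your witness is the one that validates the statement as literally written. The cost of your choice is that your version of (2) carries less information --- its forward direction is vacuous, and your $C$ omits parameters (such as $y=0$ above) where $\overline{\mu}$ genuinely jumps along the chain --- but it does suffice for the lemma's downstream uses (property 2a of Theorem~\ref{T1measurable_characterisation} and the sandwich extraction in Lemma~\ref{nullmodification_Xfn}), which need only claim (1), countability, and the existence of chain elements $B_1 \subseteq f^{-1}(y,\infty) \subseteq B_2$ with $\overline{\mu}(B_2) - \overline{\mu}(B_1)$ small.
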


\begin{proof}
By Lemma~\ref{countableatoms}, the set
\[
C := \{ y \in \R : \lim _{\delta \rightarrow 0} \mu^*(f^{-1}[y-\delta,y+\delta]) > 0 \}
\]
is countable. By Lemma~\ref{atomimpliesrayinverseimage}, $f^{-1}(y,\infty) = (f^{-1}(-\infty,y])^c \in \overline{\calA}$, for all $y \in \R \setminus C$.

Suppose $y \in \R \setminus C$ and fix $\epsilon > 0$. Then there exists $\delta > 0$ such that $\mu^*(f^{-1}[y-\delta,y+\delta]) < \epsilon$, and $y^{\prime} \in \R \setminus C$ such that $y < y^{\prime} < y + \delta$. It follows that
\begin{eqnarray*}
\overline{\mu}(f^{-1}(y,\infty)) - \overline{\mu}(f^{-1}(y^{\prime},\infty)) & = & \overline{\mu}(f^{-1}(y,\infty) \setminus f^{-1}(y^{\prime},\infty)) \\
& = & \mu^*(f^{-1}(y,y^{\prime}]) \\
& \leq & \mu^*(f^{-1}[y-\delta,y+\delta]) \\
& < & \epsilon.
\end{eqnarray*}
But then
\begin{eqnarray*}
\sup \{ \overline{\mu}(A) : A \in \calT, f^{-1}(y,\infty) \supset A \} & \leq & \overline{\mu}(f^{-1}(y,\infty)) \\
& < & \overline{\mu}(f^{-1}(y^{\prime},\infty)) + \epsilon \\
& \leq & \sup \{ \overline{\mu}(A) : A \in \calT, f^{-1}(y,\infty) \supset A \} + \epsilon.
\end{eqnarray*}
Letting $\epsilon \rightarrow 0$ gives
\[
\mu^*(f^{-1}(y,\infty)) = \overline{\mu}(f^{-1}(y,\infty)) = \sup \{ \overline{\mu}(A) : A \in \calT, f^{-1}(y,\infty) \supset A \}.
\]
The proof that $\mu^*(f^{-1}(y,\infty)) = \inf \{ \overline{\mu}(A) : A \in \calT, f^{-1}(y,\infty) \subset A \}$ is similar.

Conversely, suppose 
\begin{eqnarray*}
\mu^*(f^{-1}(y,\infty)) &=& \sup \{ \overline{\mu}(A) : A \in \calT, f^{-1}(y,\infty) \supset A \} \\
&=& \inf \{ \overline{\mu}(A) : A \in \calT, f^{-1}(y,\infty) \subset A \}.
\end{eqnarray*}
Fix $\epsilon > 0$. Since $C$ is countable, one may choose $\delta > 0$ so that $y - \delta \in \R \setminus C$ and $y + \delta \in \R \setminus C$ with $\overline{\mu}(f^{-1}(y-\delta,\infty)) < \mu^*(f^{-1}(y,\infty)) + \epsilon/2$ and $\overline{\mu}(f^{-1}(y+\delta,\infty)) > \mu^*(f^{-1}(y,\infty)) - \epsilon/2$. Note $\overline{\mu}(f^{-1}[y-\delta,\infty)) = \overline{\mu}(f^{-1}(y-\delta,\infty))$, since $\overline{\mu}(f^{-1}(y-\delta)) = 0$ by Lemma~\ref{atomimpliesrayinverseimage}. Hence
\begin{eqnarray*}
\mu^*(f^{-1}[y-\delta,y+\delta]) & = & \overline{\mu}(f^{-1}[y-\delta,y+\delta]) \\
& = & \overline{\mu}(f^{-1}[y-\delta,\infty)) - \overline{\mu}(f^{-1}(y+\delta,\infty)) \\
& < & [\mu^*(f^{-1}(y,\infty)) + \epsilon/2] - [\mu^*(f^{-1}(y,\infty)) - \epsilon/2] \\
& = & \epsilon.
\end{eqnarray*}
Hence $\lim _{\delta \rightarrow 0} \mu^*(f^{-1}[y-\delta,y+\delta]) = 0$ and $y \in \R \setminus C$.
\qed \end{proof}

By similar reasoning, the preceding lemma also holds if one replaces $f^{-1}(y,\infty)$ with $f^{-1}(-\infty,y)$, $f^{-1}[y,\infty)$, or $f^{-1}(-\infty,y]$.

The condition on chains of inverse images identified in Lemma~\ref{countableexceptions}, together with smoothness, turns out to characterise $T_1$-measurable functions.

\begin{theorem} \label{T1measurable_characterisation}
Let $(X,\calA,\mu)$ be a charge space and let $f$ be a real-valued function on $X$. Then the following three statements are logically equivalent.
\begin{enumerate}
\item $f \in L_0(X,\calA,\mu)$.
\item $f: X \rightarrow \R$ satisfies the following:
\begin{enumerate}
\item there exists a countable set $C \subset \R$ such that $f^{-1}(y,\infty) \in \overline{\calA}$ for $y \in \R \setminus C$, and
\item $f$ is smooth.
\end{enumerate}
\item There exist sequences of simple functions (with respect to $\overline{\calA}$) $\{ f^+_n \}_{n=1}^{\infty}$ and $\{ f^-_n \}_{n=1}^{\infty}$ such that for each $n \in \N$,
\[
f^+_n := \sum_{j=1}^{K_n-1} y_{n,j} I_{(f^+)^{-1}(y_{n,j},y_{n,j+1}]} + y_n I_{(f^+)^{-1}(y_n,\infty)} 
\] 
and
\[
f^-_n := \sum_{j=1}^{K_n-1} y_{n,j} I_{(f^-)^{-1}(y_{n,j},y_{n,j+1}]} + y_n I_{(f^-)^{-1}(y_n,\infty)}
\] 
where
\begin{enumerate}
\item $K_n := n2^n$, 
\item $y_{n,j} \in ((j-1)2^{-n}, j2^{-n}]$ for each $n \in \N$ and $j \in \{ 1, \ldots, K_n \}$, with $y_n := y_{n,K_n}$,
\item $(f^+)^{-1}(y_{n,j},\infty) \in \overline{\calA}$ and $(f^-)^{-1}(y_{n,j},\infty) \in \overline{\calA}$ for all $n \in \N$ and $j \in \{ 1, \ldots, K_n \}$,
\item $\{ y_{n,j} \}_{j=1}^{K_n} \subset \{ y_{n+1,j} \}_{j=1}^{K_{n+1}}$ for each $n \in \N$, 
\item $f^+_n \xrightarrow{h,\calA} f^+$ and $f^-_n \xrightarrow{h,\calA} f^-$ as $n \rightarrow \infty$.
\end{enumerate} 
\end{enumerate}
\end{theorem}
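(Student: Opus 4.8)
The plan is to use statement~(1) as a hub, proving $(1)\Rightarrow(2)$, $(3)\Rightarrow(1)$, $(2)\Rightarrow(1)$ and $(1)\Rightarrow(3)$; two of these are immediate and the substance sits in the two constructions of approximating simple functions. For $(1)\Rightarrow(2)$ I would read (2a) straight off Lemma~\ref{countableexceptions}, which supplies the countable $C$ with $f^{-1}(y,\infty)\in\overline{\calA}$ for $y\notin C$, while (2b) is just the fact that $T_1$-measurability implies smoothness. For $(3)\Rightarrow(1)$ I would use that hazy convergence depends only on $\mu^*$ and that $\mu^*=\overline{\mu}^*$ by Theorem~\ref{prop_1.8_basile2000}(1): condition~(3e) then says the simple (hence $T_1$-measurable with respect to $\overline{\calA}$) functions $f^+_n,f^-_n$ converge hazily to $f^+,f^-$ in $(X,\overline{\calA},\overline{\mu})$, so $f^+,f^-\in L_0(X,\overline{\calA},\overline{\mu})=L_0(X,\calA,\mu)$ by Theorems~\ref{T1_properties}(3) and~\ref{prop_1.8_basile2000}(3), whence $f=f^+-f^-\in L_0(X,\calA,\mu)$ by Theorem~\ref{T1_properties}(1).

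For $(2)\Rightarrow(1)$ I would build truncated approximants directly: using smoothness choose $k_n$ with $\mu^*(\{|f|>k_n\})<1/n$, partition $[-k_n,k_n]$ by points $t_0<\dots<t_m$ of mesh below $1/n$ taken from $\R\setminus C$ (possible as $C$ is countable), and set $g_n:=\sum_i t_{i-1}I_{f^{-1}(t_{i-1},t_i]}$. Each $f^{-1}(t_{i-1},t_i]=f^{-1}(t_{i-1},\infty)\setminus f^{-1}(t_i,\infty)$ lies in $\overline{\calA}$ by (2a) and the field axioms, so $g_n$ is simple with respect to $\overline{\calA}$; since $|f-g_n|<1/n$ on $\{|f|\le k_n\}$, the set $\{|f-g_n|\ge 1/n\}$ sits inside $\{|f|>k_n\}$ and has outer charge below $1/n$, giving $g_n\xrightarrow{h}f$ and hence $f\in L_0(X,\overline{\calA},\overline{\mu})=L_0(X,\calA,\mu)$. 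Note this direction needs only half-open rays, so the subtlety of open preimages $f^{-1}(-\infty,-y)$ never arises.

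The delicate implication is $(1)\Rightarrow(3)$. Since $f^+,f^-$ are $T_1$-measurable (Theorem~\ref{T1_properties}(2)), Lemma~\ref{countableexceptions} applied to each yields countable $C^+,C^-$ with $(f^+)^{-1}(y,\infty),(f^-)^{-1}(y,\infty)\in\overline{\calA}$ for $y\notin C^+\cup C^-$; write $B:=C^+\cup C^-$. I would then select the thresholds by induction along the dyadic refinement tree so that (3b)--(3d) hold together: at level $n$ place one value $y_{n,j}$ in each interval $((j-1)2^{-n},j2^{-n}]$ avoiding $B$; passing to level $n+1$, let the sub-interval containing $y_{n,j}$ inherit $y_{n,j}$ (securing the nesting~(3d)), and give every other level-$(n+1)$ interval---each sibling sub-interval together with each of the $2^{n+1}$ new intervals covering $(n,n+1]$---a fresh value drawn from its uncountably many points outside $B$. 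This makes all the required inverse images members of $\overline{\calA}$ and the $f^+_n,f^-_n$ simple with respect to $\overline{\calA}$.

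For the convergence~(3e) I would note that $f^+_n\le f^+$, with error at most $\max_j(y_{n,j+1}-y_{n,j})<2^{1-n}$ wherever $f^+\le y_n$ (on $f^+\le y_{n,1}\le 2^{-n}$ the error is $\le 2^{-n}$, and on $f^+\in(y_{n,j},y_{n,j+1}]$ it is below $(j+1)2^{-n}-(j-1)2^{-n}=2^{1-n}$). Hence $\{|f^+-f^+_n|>\epsilon\}\subseteq\{f^+>y_n\}$ once $2^{1-n}\le\epsilon$; as $y_n>n-2^{-n}\to\infty$ and $f^+$ inherits smoothness from $f$, $\mu^*(\{f^+>y_n\})\to0$, giving $f^+_n\xrightarrow{h}f^+$, and the same for $f^-$. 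I expect this threshold construction to be the main obstacle: the dyadic membership~(3b), the nesting~(3d), and membership of all inverse images in $\overline{\calA}$~(3c) must be arranged simultaneously, which is exactly why the selection must be organised along the refinement tree and why countability of $B$ is precisely what guarantees an admissible choice in every interval.
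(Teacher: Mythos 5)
Your proposal is correct, but it is organised differently from the paper's proof. The paper proves the single cycle $1 \Rightarrow 2 \Rightarrow 3 \Rightarrow 1$, whereas you use statement~(1) as a hub, proving $1 \Leftrightarrow 2$ and $1 \Leftrightarrow 3$ separately. Your $(1 \Rightarrow 2)$ and $(3 \Rightarrow 1)$ arguments coincide with the paper's, and your dyadic threshold selection and error estimate in $(1 \Rightarrow 3)$ is essentially the paper's construction (the paper phrases the inheritance of thresholds bottom-up --- when choosing at level $n$, keep any level-$(n-1)$ value already present in the dyadic interval --- while you phrase it top-down along the refinement tree; these are the same scheme). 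The genuine differences are two. First, your direct truncation-and-mesh proof of $(2 \Rightarrow 1)$ has no counterpart in the paper, which instead routes through statement~(3). Second, and more significantly, the paper runs the construction as $(2 \Rightarrow 3)$, choosing thresholds in $\R \setminus D$ with $D := \{ |y| : y \in C \}$ and asserting that properties 3a--3d are ``immediate''; but hypothesis 2a only controls preimages of upper open rays $f^{-1}(y,\infty)$, whereas property 3c for $f^-$ requires $(f^-)^{-1}(y,\infty) = f^{-1}(-\infty,-y) \in \overline{\calA}$, a lower \emph{open} ray, which differs from the complement $f^{-1}(-\infty,-y] = (f^{-1}(-y,\infty))^c$ by the set $f^{-1}(\{-y\})$; under hypothesis~(2) alone it is not immediate that this can be arranged (it can be repaired, e.g.\ via monotonicity of $t \mapsto \overline{\mu}(f^{-1}(-\infty,-t])$, but the paper does not do so). Your restructuring sidesteps this entirely: your $(2 \Rightarrow 1)$ uses only half-open interval preimages, as you explicitly note, and your $(1 \Rightarrow 3)$ applies Lemma~\ref{countableexceptions} to $f^+$ and $f^-$ separately, which is legitimate because both are $T_1$-measurable by Theorem~\ref{T1_properties}(2). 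So your route costs one extra implication but is, if anything, tighter than the paper's own argument at this point.
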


\begin{proof}
$(1 \implies 2)$ Property~2a is immediate from Lemma~\ref{countableexceptions}, and any $T_1$-measurable function is smooth.

$(2 \implies 3)$ Let $D := \{ |y| : y \in C \}$. For each $n \in \N$ and $j \in \{ 1, \ldots, K_n \}$, inductively choose $y_{n,j}$ to be any element of $((j-1)2^{-n}, j2^{-n}] \setminus D$, unless there is already an element of that set in $\{ y_{n-1,j} \}_{j=1}^{K_{n-1}}$, in which case set $y_{n,j}$ equal to that element. Properties~3a to~3d are immediate consequences of this construction, and then $\{ f^+_n \}_{n=1}^{\infty}$ and $\{ f^-_n \}_{n=1}^{\infty}$ are simple functions with respect to $\overline{\calA}$. 

For any $x \in (f^+)^{-1}[0,y_n]$, $| f^+_n(x) - f^+(x) | < 2^{1-n}$. Also, $f^+$ is smooth since $f^+ \leq f$. Thus for any $\epsilon > 0$ and $n > 1 - \log_2 \epsilon$,
\[
\mu^*(\{ x \in X : | f^+_n(x) - f^+(x) | > \epsilon \}) \leq \mu^*((f^+)^{-1}(y_n,\infty)) \rightarrow 0
\]
as $n \rightarrow \infty$. Hence $f^+_n \xrightarrow{h,\calA} f^+$ as $n \rightarrow \infty$, and similarly $f^-_n \xrightarrow{h,\calA} f^-$ as $n \rightarrow \infty$.

$(3 \implies 1)$ Property~3c implies $\{ f^+_n \}_{n=1}^{\infty}$ is a sequence of simple functions with respect to $\overline{\calA}$, and by 3e and Theorem~\ref{prop_1.8_basile2000}, $f^+_n \xrightarrow{h,\overline{\calA}} f^+$. Hence $f^+ \in L_0(X,\overline{\calA},\overline{\mu}) = L_0(X,\calA,\mu)$, and similarly $f^- \in L_0(X,\calA,\mu)$. By Theorem~\ref{T1_properties}, $f = f^+ - f^- \in L_0(X,\calA,\mu)$. 
\qed \end{proof}

Note that $f^+_n$ and $f^-_n$ can be alternatively defined as
\[
f^+_n := \sum_{j=1}^{K_n-1} y_{n,j} I_{(f^+)^{-1}(y_{n,j},y_{n,j+1}]}
\] 
and
\[
f^-_n := \sum_{j=1}^{K_n-1} y_{n,j} I_{(f^-)^{-1}(y_{n,j},y_{n,j+1}]}
\] 
and the proof goes through unchanged. In fact, it doesn't matter what constant values are assigned to $(f^+)^{-1}(y_n,\infty)$ and $(f^-)^{-1}(y_n,\infty)$. The alternative definitions just provided are used only in the proof of part of Theorem~\ref{integrable_characterisation}, whereas the definitions given in Statement~3 above are used in other proofs.

The above theorem determines the following conditions under which $T_1$-measurability is equivalent to conventional measurability, and integrability with respect to a charge is equivalent to conventional Lebesgue integrability.

\begin{corollary} \label{measurable_fn}
Suppose $(X,\calA,\mu)$ is a complete measure space. Then a function $f : X \rightarrow \R$ is $T_1$-measurable if and only if it is measurable in the conventional sense. 
\end{corollary}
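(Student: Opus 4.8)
The plan is to reduce the statement to the ray condition in Theorem~\ref{T1measurable_characterisation}(2), exploiting the fact that on a complete measure space the Peano-Jordan completion is trivial. First I would observe that since $(X,\calA,\mu)$ is a complete measure space, Lemma~\ref{equivalent_completeness} makes it Peano-Jordan complete, so $\overline{\calA} = \calA$; consequently the condition $f^{-1}(y,\infty) \in \overline{\calA}$ appearing in Theorem~\ref{T1measurable_characterisation} reads simply $f^{-1}(y,\infty) \in \calA$, matching the standard characterisation of measurability.

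For the forward direction I would take $f$ to be $T_1$-measurable and apply Theorem~\ref{T1measurable_characterisation} to obtain a countable set $C \subset \R$ with $f^{-1}(y,\infty) \in \calA$ for every $y \in \R \setminus C$. The task is then to upgrade this to all $y \in \R$. Since $C$ is countable, $\R \setminus C$ is dense, so for any $y \in C$ I would choose a sequence $y_n \in \R \setminus C$ with $y_n \downarrow y$ and write
\[
f^{-1}(y,\infty) = \bigcup_{n=1}^{\infty} f^{-1}(y_n,\infty).
\]
Each set on the right lies in $\calA$, and since $\calA$ is a $\sigma$-field the union does too; hence $f^{-1}(y,\infty) \in \calA$ for every $y \in \R$, so $f$ is conventionally measurable.

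For the converse I would take $f$ measurable, so $f^{-1}(y,\infty) \in \calA = \overline{\calA}$ for all $y$, giving condition (2a) of Theorem~\ref{T1measurable_characterisation} with $C = \emptyset$. To check smoothness I would note that each set $\{x : |f(x)| > k\}$ is measurable, so its outer charge equals its measure, and that these sets decrease to $\{x : |f(x)| = \infty\} = \emptyset$ as $k \to \infty$; finiteness of $\mu$ then gives $\mu(\{x : |f(x)| > k\}) \to 0$ by continuity from above, which is exactly smoothness. Theorem~\ref{T1measurable_characterisation} then yields $T_1$-measurability.

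The hard part will be the forward direction: Theorem~\ref{T1measurable_characterisation} only controls $f^{-1}(y,\infty)$ off a countable set $C$, whereas conventional measurability demands control for every $y$. Closing this gap is precisely where countable additivity becomes indispensable---it relies on closure of $\calA$ under countable unions together with the density of $\R \setminus C$, neither of which is available for a general charge space. The converse is routine, the only point of substance being that finiteness of $\mu$ supplies smoothness through continuity from above.
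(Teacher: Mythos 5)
Your proof is correct and follows essentially the same route as the paper: both directions reduce to Theorem~\ref{T1measurable_characterisation} after noting $\overline{\calA}=\calA$ via Lemma~\ref{equivalent_completeness}, and both close the countable-exception gap using density of $\R \setminus C$ together with countable set operations in the $\sigma$-field. The only cosmetic differences are that you recover measurability through countable unions of open rays where the paper uses countable intersections to obtain the closed rays $f^{-1}[y,\infty)$, and that you verify smoothness directly by continuity from above (valid, since $\mu$ is a finite measure and the sets $\{x : |f(x)|>k\}$ decrease to $\emptyset$) where the paper simply cites Proposition~4.2.17 of the reference.
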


\begin{proof}
$(\implies)$ By Lemma~\ref{equivalent_completeness}, $\overline{\calA} = \calA$. By Theorem~\ref{T1measurable_characterisation}, $f^{-1}(y,\infty) \in \calA$ for any $y \in \R \setminus C$. But then $f^{-1}[y,\infty) \in \calA$ for any $y \in \R$, using the closure of $\calA$ under countable intersections. Hence $f$ is measurable.

$(\impliedby)$ Any measurable function immediately satisfies Statement~2a of Theorem~\ref{T1measurable_characterisation} with $C = \emptyset$. Moreover, any measurable function is smooth by Proposition~4.2.17 of \cite{bhaskararao1983}.
\qed \end{proof}

\begin{corollary} \label{integrable_fn}
Suppose $(X,\calA,\mu)$ is a complete measure space. Then a function $f : X \rightarrow \R$ is integrable (in the sense of Definition~\ref{integrable}) if and only if it is Lebesgue integrable. Moreover, the two types of integral are equal.
\end{corollary}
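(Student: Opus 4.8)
The plan is to reduce to the case of a nonnegative $f$ and to exhibit a single increasing sequence of simple functions that serves simultaneously as a determining sequence in the sense of Definition~\ref{integrable} and as a monotone approximating sequence for the Lebesgue integral. First I would split $f = f^+ - f^-$: by Theorem~\ref{L1_integrable_properties}(2), charge-integrability of $f$ is equivalent to charge-integrability of $f^+$ and $f^-$, Lebesgue integrability decomposes likewise, and both integrals are additive across positive and negative parts, so it suffices to treat $f \ge 0$. Since $(X,\calA,\mu)$ is a complete measure space, Lemma~\ref{equivalent_completeness} gives $\overline{\calA} = \calA$ and Corollary~\ref{measurable_fn} shows $T_1$-measurability coincides with measurability; hence $f$ is measurable in both senses, and only the value and finiteness of the integral remain at issue. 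I would also record the elementary fact that for a measurable set $A \in \calA$ one has $\mu^*(A) = \mu(A)$, so that the outer charge and the measure agree on all sets arising below.

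For $f \ge 0$ measurable, take the standard dyadic truncations $s_n := \sum_{j=1}^{n2^n} (j-1)2^{-n} I_{\{(j-1)2^{-n} \le f < j2^{-n}\}} + n I_{\{ f \ge n \}}$. Each level set lies in $\calA = \overline{\calA}$, so each $s_n$ is a simple function in the charge sense, and the integral $\sum_k c_k \mu(A_k)$ of a simple function is identical in the two theories. The sequence is nonnegative, increasing, and converges pointwise to $f$ everywhere (as $f$ is real-valued). On the finite measure space this pointwise convergence yields hazy convergence: for $2^{-n} < \epsilon$ one has $\{ |s_n - f| > \epsilon \} \subseteq \{ f \ge n \}$, and $\mu(\{ f \ge n \}) \to 0$ by continuity from above, so $s_n \xrightarrow{h} f$. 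If $f$ is Lebesgue integrable, then by the monotone convergence theorem $\int s_n\, d\mu$ increases to the finite Lebesgue integral of $f$; since $s_n$ is increasing, $\int |s_n - s_m|\, d\mu = \left| \int s_n\, d\mu - \int s_m\, d\mu \right| \to 0$, so $\{ s_n \}$ is a determining sequence. Thus $f$ is charge-integrable with charge integral $\lim_n \int s_n\, d\mu$, which is exactly its Lebesgue integral.

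Conversely, if $f \ge 0$ is charge-integrable, then $s_n \le f$ together with the monotonicity in Theorem~\ref{L1_integrable_properties}(3) gives $\int s_n\, d\mu \le \int f\, d\mu < \infty$ for every $n$; since $\int s_n\, d\mu$ is also the Lebesgue integral of $s_n$, the monotone convergence theorem forces the Lebesgue integral of $f$ to be finite, whence $f$ is Lebesgue integrable and the previous paragraph supplies the equality of the two integrals. Undoing the reduction to $f \ge 0$ completes the proof. The main obstacle is the passage from pointwise to hazy convergence and the verification that the dyadic sequence is genuinely a determining sequence; both rest on boundedness of $\mu$ (for continuity from above and for $\{ f \ge n \} \downarrow \emptyset$) and on the identity $\mu^* = \mu$ on $\calA$, so care is needed to invoke completeness and finiteness exactly where required.
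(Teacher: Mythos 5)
Your proof is correct, and its skeleton matches the paper's: reduce to $f \geq 0$ via Theorem~\ref{L1_integrable_properties}, invoke $\overline{\calA} = \calA$ (Lemma~\ref{equivalent_completeness}) and Corollary~\ref{measurable_fn}, then push a monotone sequence of simple functions through the Lebesgue monotone convergence theorem to produce a determining sequence in one direction, and use monotonicity of the charge integral (Theorem~\ref{L1_integrable_properties}(3)) in the other. The genuine difference is where the approximating sequence comes from. The paper reuses the sequence supplied by Statement~3 of Theorem~\ref{T1measurable_characterisation}, which arrives with hazy convergence already built in; you instead construct the classical dyadic truncations $s_n$ from conventional measurability (available by Corollary~\ref{measurable_fn}, so every level set lies in $\calA$ exactly) and verify hazy convergence by hand, via $\{ |s_n - f| > \epsilon \} \subseteq \{ f \geq n \}$ together with continuity from above of the finite countably additive measure. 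Your route is more elementary and self-contained in that it uses nothing from the characterisation theorem beyond Corollary~\ref{measurable_fn}, whereas the paper's choice deliberately showcases the utility of its new characterisation machinery; the arguments are otherwise interchangeable. A minor structural difference: in the charge-integrable-implies-Lebesgue-integrable direction, the paper bounds $\int s\, d\mu$ over \emph{all} simple $0 \leq s \leq f$ and appeals to the supremum definition of the Lebesgue integral, while you bound only the dyadic sequence and let the monotone convergence theorem identify its limit with the Lebesgue integral --- an immaterial distinction. Your care in flagging exactly where boundedness of $\mu$ and countable additivity enter (continuity from above, $\{ f \geq n \} \downarrow \emptyset$) is a point in the write-up's favour, since this is precisely the step that fails for a general charge.
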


\begin{proof}
It is sufficient to prove the corollary for non-negative functions, since then it will apply to $f^+$ and $f^-$, and by Theorem~\ref{L1_integrable_properties}, to $f$. Hence assume $f \geq 0$.

Recall that for a complete measure space, $\overline{\calA} = \calA$ by Lemma~\ref{equivalent_completeness}, so simple functions with respect to $\overline{\calA}$ are also simple with respect to $\calA$. Moreover, all simple functions are trivially integrable in both senses, with the two types of integral equal. Also note $f$ is both $T_1$-measurable and measurable in the conventional sense if it is integrable in either sense, by Corollary~\ref{measurable_fn}.

$(\implies)$ By Theorem~\ref{L1_integrable_properties}, any simple function $s$ with $0 \leq s \leq f$ satisfies $0 \leq \int s d\mu \leq \int f d\mu$, where the integral is as in Definition~\ref{integrable}. Hence 
\[
\sup \left\{ \int s d\mu : s \mbox{ simple }, 0 \leq s \leq f \right\} \leq \int f d\mu
\]
is finite, that is, $f$ is Lebesgue integrable. 

$(\impliedby)$ Let $f_n$ be the sequence of simple functions converging monotonically pointwise to $f$ asserted in Statement~3 of Theorem~\ref{T1measurable_characterisation}. By the monotone convergence theorem for the Lebesgue integral, 
\[
\int f_n d\mu \rightarrow \sup \left\{ \int s d\mu : s \mbox{ simple }, 0 \leq s \leq f \right\}.
\]
Hence $\{ \int f_n d\mu \}_{n=1}^{\infty}$ is a Cauchy sequence, and by Definition~\ref{integrable}, $f \in L_1(X, \calA, \mu)$ with
\[
\int f d\mu = \sup \left\{ \int s d\mu : s \mbox{ simple }, 0 \leq s \leq f \right\}.
\]
\qed \end{proof}

These two corollaries together imply that the $L_p$ spaces of Definition~\ref{Lp} are equivalent to the conventional Lebesgue function spaces for a complete measure space, in the sense that they contain the same functions and have the same pseudometric. This in turn gives the following.

\begin{corollary} \label{Riesz-Fischer}
Suppose $(X,\calA,\mu)$ is a complete measure space. Then $L_p(X,\calA,\mu)$ is complete for all $p \in \{0\} \cup [1,\infty)$.
\end{corollary}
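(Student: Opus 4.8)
The plan is to verify the third, and most tractable, of the equivalent completeness conditions supplied by Theorem~\ref{Lp_complete}, and then let that theorem do the rest of the work. The decisive observation is that condition~3 of Theorem~\ref{Lp_complete} makes no reference to $p$: it is a purely set-theoretic statement about nested chains in $\calA$ together with $\mu$. Consequently, establishing condition~3 simultaneously yields completeness of $L_p(X,\calA,\mu)$ for every $p \in [1,\infty)$ (via condition~1) and of $L_0(X,\calA,\mu)$ (via condition~2), which is exactly the range $p \in \{0\} \cup [1,\infty)$ asserted in the corollary.

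The main step is therefore to check condition~3 directly. Let $A_1 \subseteq A_2 \subseteq \ldots$ be sets in $\calA$ with $\sup_{k} \mu(A_k) < \infty$, and fix $\epsilon > 0$. Because $(X,\calA,\mu)$ is a measure space, $\calA$ is a $\sigma$-field, so one may simply take $D := \bigcup_{k=1}^{\infty} A_k \in \calA$. Each $A_k \subseteq D$, hence $A_k \setminus D = \emptyset$ and $\mu(A_k \setminus D) = 0$ for all $k$. Countable additivity gives continuity from below, so $\mu(D) = \lim_{k \to \infty} \mu(A_k)$; in particular $\lim_{k} \mu(A_k) \leq \mu(D) \leq \lim_{k} \mu(A_k) + \epsilon$. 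This verifies condition~3, and Theorem~\ref{Lp_complete} then delivers the conclusion.

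There is essentially no obstacle here. Completeness of the measure space is not even needed for this particular argument---only that $\calA$ is a $\sigma$-field and $\mu$ is countably additive---although completeness was already used in the two preceding corollaries to identify these $L_p$ spaces with the Lebesgue spaces in the first place. An alternative route, in keeping with the corollary's label, would bypass Theorem~\ref{Lp_complete} entirely: Corollaries~\ref{measurable_fn} and~\ref{integrable_fn} identify $L_p(X,\calA,\mu)$ with the conventional Lebesgue space $L_p$, as sets of functions and as pseudometric spaces, so completeness would follow from the classical Riesz--Fischer theorem for $p \in [1,\infty)$ and from the standard completeness of convergence in measure on a finite measure space for $p=0$. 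I prefer the first route because it is self-contained within the paper's framework and treats all admissible $p$ uniformly.
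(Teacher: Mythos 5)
Your proof is correct, but it takes a genuinely different route from the paper's. The paper's own proof is the one you describe as the ``alternative route'': it invokes Corollaries~\ref{measurable_fn} and~\ref{integrable_fn} to identify $L_p(X,\calA,\mu)$ with the conventional Lebesgue spaces, and then cites classical completeness results (the Riesz--Fischer theorem), which is why the corollary carries that label. Your primary argument instead verifies condition~3 of Theorem~\ref{Lp_complete} directly: since $\calA$ is a $\sigma$-field, $D := \bigcup_k A_k \in \calA$, each $\mu(A_k \setminus D) = 0$, and continuity from below gives $\mu(D) = \lim_k \mu(A_k)$, so the theorem delivers completeness of $L_0$ and of every $L_p$, $p \in [1,\infty)$, at once. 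Each approach has something to recommend it. Yours is self-contained within the paper's charge-theoretic framework, treats all admissible $p$ uniformly through the $p$-independence of condition~3, and --- as you correctly observe --- never uses completeness of the measure space, so it actually proves the stronger statement that $L_p(X,\calA,\mu)$ is complete for \emph{any} bounded measure space; the completeness hypothesis in the corollary is only needed for the paper's route, where Corollaries~\ref{measurable_fn} and~\ref{integrable_fn} require $\overline{\calA} = \calA$ via Lemma~\ref{equivalent_completeness}. The paper's route, on the other hand, is shorter given that the two identification corollaries are already in hand, and it makes the connection to classical theory explicit --- though for $p=0$ it implicitly leans on completeness of convergence in measure on a finite measure space, a fact slightly beyond Riesz--Fischer proper, which your argument sidesteps entirely.
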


\begin{proof}
Conventional Lebesgue function spaces are complete, a result sometimes called the Riesz-Fischer theorem. Hence $L_p(X,\calA,\mu)$ is complete by Corollaries~\ref{integrable_fn} and~\ref{measurable_fn}.
\qed \end{proof}

Versions of Corollaries~\ref{measurable_fn}, \ref{integrable_fn}, and~\ref{Riesz-Fischer} can presumably also be proved for unbounded complete measures. However, the above statements are adequate for the present purposes.

The next and final corollary to Theorem~\ref{T1measurable_characterisation} is used in the proof of Theorem~\ref{null_sets_add_no_functions} to establish that a certain vector space isomorphism is also an isometry.

\begin{corollary} \label{nested_fields_2}
Suppose $(X,\calA,\mu)$ and $(X,\calA^{\prime},\mu)$ are charge spaces with $\calA^{\prime} \subseteq \calA$. Then $d_{\calA^{\prime}}(f,g) = d_{\calA}(f,g)$ for all $f, g \in L_0(X,\calA^{\prime},\mu)$.
\end{corollary}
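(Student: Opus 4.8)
The plan is to work with the single function $h := f - g$, which lies in $L_0(X,\calA^{\prime},\mu)$ by Theorem~\ref{T1_properties}(1), and to compare the two outer charges only on the superlevel sets that actually enter the definition of $d$. Writing $\phi_{\calB}(\epsilon) := \mu^*_{\calB}(\{ x : |h(x)| > \epsilon \})$ for $\calB \in \{ \calA, \calA^{\prime} \}$, both $d_{\calA}(f,g)$ and $d_{\calA^{\prime}}(f,g)$ are infima of the set $\{ \epsilon > 0 : \phi_{\calB}(\epsilon) < \epsilon \}$, and each $\phi_{\calB}$ is non-increasing in $\epsilon$ by monotonicity of the outer charge. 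One inequality is immediate: by Lemma~\ref{nested_fields}(6) we have $\phi_{\calA^{\prime}}(\epsilon) \geq \phi_{\calA}(\epsilon)$ for every $\epsilon$, so $\{ \epsilon : \phi_{\calA^{\prime}}(\epsilon) < \epsilon \} \subseteq \{ \epsilon : \phi_{\calA}(\epsilon) < \epsilon \}$ and hence $d_{\calA^{\prime}}(f,g) \geq d_{\calA}(f,g)$.

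The substance is the reverse inequality, which I would obtain by showing that $\phi_{\calA^{\prime}}$ and $\phi_{\calA}$ in fact coincide off a countable set. Applying Lemma~\ref{countableexceptions} to $h$ with respect to $\calA^{\prime}$ gives a countable $C \subset \R$ with $h^{-1}(\epsilon,\infty) \in \overline{\calA^{\prime}}$ for $\epsilon \notin C$, and the remark following that lemma gives a countable $C^{\prime}$ with $h^{-1}(-\infty,-\epsilon) \in \overline{\calA^{\prime}}$ whenever $-\epsilon \notin C^{\prime}$. Setting $E := C \cup \{ -y : y \in C^{\prime} \}$, which is countable, for every $\epsilon > 0$ with $\epsilon \notin E$ the set $\{ x : |h(x)| > \epsilon \} = h^{-1}(\epsilon,\infty) \cup h^{-1}(-\infty,-\epsilon)$ is a union of two members of $\overline{\calA^{\prime}}$ and so lies in $\overline{\calA^{\prime}}$, since $\overline{\calA^{\prime}}$ is a field. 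The recorded fact that $\mu^*_{\calA^{\prime}}(A) = \mu^*_{\calA}(A)$ for every $A \in \overline{\calA^{\prime}}$ then yields $\phi_{\calA^{\prime}}(\epsilon) = \phi_{\calA}(\epsilon)$ for all $\epsilon \in (0,\infty) \setminus E$.

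To finish I would transfer this almost-everywhere agreement to the infima using monotonicity. Because $\epsilon \mapsto \phi_{\calA}(\epsilon) - \epsilon$ is strictly decreasing, the set $\{ \epsilon : \phi_{\calA}(\epsilon) < \epsilon \}$ is an up-interval containing $(d_{\calA}(f,g), \infty)$; thus every $\epsilon > d_{\calA}(f,g)$ satisfies $\phi_{\calA}(\epsilon) < \epsilon$. Choosing any such $\epsilon$ outside the countable set $E$, which is possible since $(d_{\calA}(f,g),\infty) \setminus E$ is dense in $(d_{\calA}(f,g),\infty)$, gives $\phi_{\calA^{\prime}}(\epsilon) = \phi_{\calA}(\epsilon) < \epsilon$, whence $d_{\calA^{\prime}}(f,g) \leq \epsilon$; letting $\epsilon \downarrow d_{\calA}(f,g)$ through $\R \setminus E$ yields $d_{\calA^{\prime}}(f,g) \leq d_{\calA}(f,g)$, and combining with the first paragraph gives equality.

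The only delicate point is this last transfer: mere pointwise agreement of $\phi_{\calA}$ and $\phi_{\calA^{\prime}}$ off a countable set does not by itself equate infima, and I expect the care to go into invoking strict monotonicity of $\epsilon \mapsto \phi(\epsilon) - \epsilon$ together with the density of $\R \setminus E$, rather than into any deep estimate. Everything else reduces to bookkeeping with the field structure of $\overline{\calA^{\prime}}$ and the equality of outer charges on it.
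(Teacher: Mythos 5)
Your proposal is correct and takes essentially the same route as the paper: the paper sets $h := |f-g| \in L_0(X,\calA^{\prime},\mu)$ (so only the single ray preimage $h^{-1}(\epsilon,\infty)$ is needed, avoiding your two-tail decomposition of $\{x : |f(x)-g(x)| > \epsilon\}$), invokes the same countable-exception result to place these preimages in $\overline{\calA^{\prime}}$, uses the agreement $\mu^*_{\calA^{\prime}} = \mu^*_{\calA} = \overline{\mu}$ on $\overline{\calA^{\prime}}$, and concludes by density of the complement of the countable set. Your explicit monotonicity argument for transferring the pointwise agreement to the infima simply spells out what the paper's final sentence leaves terse, and is a welcome clarification rather than a different method.
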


\begin{proof}
Define $h := | f - g | \in L_0(X,\calA^{\prime},\mu)$. By Theorem~\ref{T1measurable_characterisation}, $h^{-1}(y,\infty) \in \overline{\calA^{\prime}}$ for all $y \in (0,\infty) \setminus C$, where $C \subset (0,\infty)$ is countable. It follows that 
\[
\mu^*_{\calA^{\prime}}(h^{-1}(y,\infty)) = \mu^*_{\calA}(h^{-1}(y,\infty)) = \overline{\mu}(h^{-1}(y,\infty))
\]
for all $y \in (0,\infty) \setminus C$. But then $d_{\calA^{\prime}}(f,g) = d_{\calA}(f,g)$, since $(0,\infty) \setminus C$ is dense in $(0,\infty)$.
\qed \end{proof}

The following theorem asserts that chains of inverse images can also be used to characterise integrable functions, in combination with a regularity condition that is stronger than smoothness (Condition~2b below).

\begin{theorem} \label{integrable_characterisation}
Let $(X,\calA,\mu)$ be a charge space and let $f$ be a real-valued function on $X$. Then the following three statements are logically equivalent.
\begin{enumerate}
\item $f \in L_1(X,\calA,\mu)$.
\item $f: X \rightarrow \R$ satisfies the following:
\begin{enumerate}
\item there exists a countable set $C \subset \R$ such that $f^{-1}(y,\infty) \in \overline{\calA}$ for $y \in \R \setminus C$, and
\item for all $\epsilon > 0$, there exists $y \in (0, \infty)$ such that $| f | I_{| f |^{-1}(y, \infty)} \in L_1(X,\calA,\mu)$ with
\[
\int | f | I_{| f |^{-1}(y, \infty)} d\mu < \epsilon.
\]
\end{enumerate}
\item $f \in L_0(X,\calA,\mu)$ and the sequences $\{ f_n^+ \}_{n=1}^{\infty}$ and $\{ f_n^- \}_{n=1}^{\infty}$ obtained by applying Statement~3 of Theorem~\ref{T1measurable_characterisation} to $f$ satisfy the additional conditions $\int | f^+_m - f^+_n | d\mu \rightarrow 0$ and $\int | f^-_m - f^-_n | d\mu \rightarrow 0$ as $m, n \rightarrow \infty$.
\end{enumerate}
\end{theorem}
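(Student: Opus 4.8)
The plan is to establish the cycle of implications $(1) \Rightarrow (2) \Rightarrow (3) \Rightarrow (1)$, using Theorem~\ref{T1measurable_characterisation} as the backbone. Each statement here entails (possibly only after some work, as in the case of Statement~2) that $f \in L_0(X,\calA,\mu)$, so once that is known I may invoke the approximating sequences $\{f_n^+\}, \{f_n^-\}$ of that theorem and their key features: $0 \le f_n^{\pm} \le f^{\pm}$ pointwise, $f_n^{\pm} \xrightarrow{h,\calA} f^{\pm}$, and $|f_n^{\pm} - f^{\pm}| < 2^{1-n}$ on $(f^{\pm})^{-1}[0,y_n]$ with $y_n \to \infty$.

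The implication $(3) \Rightarrow (1)$ is immediate: since $f \in L_0$, property~3e of Theorem~\ref{T1measurable_characterisation} gives $f_n^{\pm} \xrightarrow{h} f^{\pm}$, and together with the assumed conditions $\int |f_m^{\pm} - f_n^{\pm}| d\mu \to 0$ the sequences $\{f_n^{\pm}\}$ are determining sequences (Definition~\ref{integrable}), so $f^+, f^- \in L_1$ and hence $f = f^+ - f^- \in L_1$ by Theorem~\ref{L1_integrable_properties}. For $(1) \Rightarrow (2)$: integrability gives $f \in L_0$, so 2a follows from Theorem~\ref{T1measurable_characterisation}, and $|f| \in L_1$ by Lemma~\ref{Lp_pos_neg_parts}. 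For 2b I would take any $y_n \to \infty$ avoiding the countable exceptional set of Lemma~\ref{countableexceptions} (so each $|f|^{-1}(y_n,\infty) \in \overline{\calA}$ and the truncated tails are integrable by Theorem~\ref{dominated_integrability}), observe that $|f| I_{|f|^{-1}(y_n,\infty)} \xrightarrow{h} 0$ (its super-level sets are just $|f|^{-1}(y_n,\infty)$, whose outer charge vanishes by smoothness), and apply dominated convergence (Theorem~\ref{dominated_convergence}) with dominating function $|f|$ to conclude $\int |f| I_{|f|^{-1}(y_n,\infty)} d\mu \to 0$.

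The substantive step is $(2) \Rightarrow (3)$. First I would show that 2b forces $f$ to be smooth: for a single $y$ supplied by 2b, the function $|f| I_{|f|^{-1}(y,\infty)}$ lies in $L_1 \subseteq L_0$ and is therefore smooth, and since its super-level set $\{\,|f| I_{|f|^{-1}(y,\infty)} > k\,\}$ equals $|f|^{-1}(k,\infty)$ for $k \ge y$, smoothness of this function is precisely smoothness of $f$. Combined with 2a, Theorem~\ref{T1measurable_characterisation} then yields $f \in L_0$, so the sequences $\{f_n^{\pm}\}$ are available. To prove the Cauchy condition I would split $\int |f_m^+ - f_n^+| d\mu$ at a threshold $y$ chosen via 2b (enlarging it if necessary so that $(f^+)^{-1}(y,\infty) \in \overline{\calA}$) to make $\int |f| I_{|f|^{-1}(y,\infty)} d\mu < \epsilon$. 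On the tail $(f^+)^{-1}(y,\infty)$ the bound $0 \le f_n^+ \le f^+$ gives $|f_m^+ - f_n^+| \le f^+$, so that contribution is at most $\int f^+ I_{(f^+)^{-1}(y,\infty)} d\mu \le \int |f| I_{|f|^{-1}(y,\infty)} d\mu < \epsilon$ uniformly in $m,n$; on the bounded bulk $(f^+)^{-1}[0,y]$, for $m,n$ large enough that $y_m, y_n \ge y$ the uniform estimate $|f_k^+ - f^+| < 2^{1-k}$ gives $|f_m^+ - f_n^+| < 2^{1-m}+2^{1-n}$, so that contribution is at most $(2^{1-m}+2^{1-n})\mu(X) \to 0$. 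Hence $\limsup_{m,n} \int |f_m^+ - f_n^+| d\mu \le \epsilon$, and letting $\epsilon \to 0$ gives the claim; the argument for $f^-$ is identical.

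The main obstacle I anticipate is the bookkeeping in $(2) \Rightarrow (3)$ — in particular, verifying that all the auxiliary functions ($f^+ I_{(f^+)^{-1}(y,\infty)}$, the simple functions $|f_m^+ - f_n^+| I_{(f^+)^{-1}(y,\infty)}$, and so on) are genuinely integrable, so that the tail/bulk splitting and the monotonicity comparisons are justified. These hinge on choosing the threshold $y$ outside the countable exceptional set of Lemma~\ref{countableexceptions} so that the relevant level sets lie in $\overline{\calA}$, and then invoking Theorem~\ref{dominated_integrability} together with Theorem~\ref{L1_integrable_properties}. The analytic content — dominated convergence plus a uniform-on-bounded-sets approximation weighed against the finite total charge $\mu(X)$ — is routine once this measurability ledger is in order.
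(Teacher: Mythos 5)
Your proposal is correct, and its skeleton matches the paper's: the same cycle $(1)\Rightarrow(2)\Rightarrow(3)\Rightarrow(1)$, the same tail/bulk splitting of $\int |f_m^+ - f_n^+|\, d\mu$ at a threshold $y$ chosen outside the countable exceptional sets (with the same $2^{1-m}+2^{1-n}$ estimate against $\mu(X)$ on the bulk and domination by $f^+$ on the tail), and the same determining-sequence argument for $(3)\Rightarrow(1)$. You diverge in two sub-steps, both defensibly cleaner. In $(1)\Rightarrow(2)$ the paper applies dominated convergence to the simple approximations $f_n^+ \rightarrow f^+$ (in their tail-free variant) and extracts the tail estimate from the identity $\int f^+ I_{(f^+)^{-1}(y_k,\infty)}\, d\mu = \int |f_k^+ - f^+| I_{(f^+)^{-1}(y_k,\infty)}\, d\mu \leq \int |f_k^+ - f^+|\, d\mu$; you instead apply Theorem~\ref{dominated_convergence} directly to $|f| I_{|f|^{-1}(y_n,\infty)} \xrightarrow{h} 0$ with dominating function $|f|$, which is more direct and dispenses with the alternative (tail-free) definition of $f_n^{\pm}$ noted after Theorem~\ref{T1measurable_characterisation}. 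In $(2)\Rightarrow(3)$, to deduce smoothness the paper normalises to $y \geq 1$ and uses $\int |f| I_{|f|^{-1}(y,\infty)}\, d\mu \geq y\, \mu^*(|f|^{-1}(y,\infty))$, whereas you observe that the super-level sets of the $L_1$ (hence smooth) function $|f| I_{|f|^{-1}(y,\infty)}$ coincide with those of $f$ above level $y$; your version avoids the normalisation and does not tacitly require that 2b supply a threshold in $[1,\infty)\setminus D$ (an enlargement the paper leaves implicit). Two details to make explicit in a final write-up: in $(3)\Rightarrow(1)$ the $f_n^{\pm}$ are simple only with respect to $\overline{\calA}$, so they are determining sequences over $(X,\overline{\calA},\overline{\mu})$ and you need $L_1(X,\overline{\calA},\overline{\mu}) = L_1(X,\calA,\mu)$ from Theorem~\ref{prop_1.8_basile2000} to land back in $L_1(X,\calA,\mu)$ (the paper cites this step); and your ``enlarging $y$ if necessary'' move should be anchored by applying Theorem~\ref{T1measurable_characterisation} to $|f|$ and $f^{\pm}$ (all in $L_0$ by that point) so the enlarged threshold avoids every relevant exceptional set, after which Theorem~\ref{dominated_integrability} and Theorem~\ref{L1_integrable_properties} give integrability of, and the bound on, the enlarged tail --- exactly the bookkeeping you flagged.
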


\begin{proof}
$(1 \implies 2)$ Any $f \in L_1(X,\calA,\mu)$ is $T_1$-measurable, hence 2a holds by Theorem~\ref{T1measurable_characterisation}. 

For each $n \in \N$, define
\[
f^+_n := \sum_{j=1}^{K_n-1} y_{n,j} I_{(f^+)^{-1}(y_{n,j},y_{n,j+1}]}
\] 
as in the note following Theorem~\ref{T1measurable_characterisation}. By Theorem~\ref{L1_integrable_properties}, $f^+ \in L_1(X,\calA,\mu)$. Thus by dominated convergence (Theorem~\ref{dominated_convergence}), $\int |f^+_n - f^+| d\mu \rightarrow 0$ as $n \rightarrow \infty$. Fix $\epsilon > 0$, then there is $k \in \N$ such that $\int | f^+_k - f^+ | d\mu < \epsilon/2$. Set $y := y_k$ as defined in Theorem~\ref{T1measurable_characterisation}. Then $(f^+)^{-1}(y,\infty) \in \overline{\calA}$ and hence by Corollary~\ref{fI_A}, $f^+ I_{(f^+)^{-1}(y,\infty)} \in L_1(X,\calA,\mu)$. Now
\begin{eqnarray*}
\int f^+ I_{(f^+)^{-1}(y,\infty)} d\mu & = & \int (f^+ - f^+_k) I_{(f^+)^{-1}(y,\infty)} d\mu \\
& = & \int | f^+_k - f^+ | I_{(f^+)^{-1}(y,\infty)} d\mu \\ 
& \leq & \int | f^+_k - f^+ | d\mu \\ 
& < & \epsilon/2
\end{eqnarray*}
where the first equality follows since $f^+_k I_{(f^+)^{-1}(y_k,\infty)} = 0$, and the second follows since $f^+_k \leq f^+$. Similarly, $f^- I_{(f^-)^{-1}(y,\infty)} \in L_1(X,\calA,\mu)$ with 
\[
\int f^- I_{(f^-)^{-1}(y,\infty)} d\mu < \epsilon/2.
\]
Hence $| f | I_{| f |^{-1}(y, \infty)} \in L_1(X,\calA,\mu)$ with
\begin{eqnarray*}
\int | f | I_{| f |^{-1}(y,\infty)} d\mu & = & \int f^+ I_{(f^+)^{-1}(y,\infty)} d\mu + \int f^- I_{(f^-)^{-1}(y,\infty)} d\mu \\
& < & \epsilon.
\end{eqnarray*}

$(2 \implies 3)$ Let $D := \{ |y| : y \in C \}$ and fix $\epsilon > 0$. Then there is $y \in [1, \infty) \setminus D$ such that $| f | I_{| f |^{-1}(y, \infty)} \in L_1(X,\calA,\mu)$ and
\[
\int | f | I_{| f |^{-1}(y, \infty)} d\mu < \epsilon.
\]
Observe that
\begin{eqnarray*}
\int | f | I_{| f |^{-1}(y,\infty)} d\mu & \geq & y \int I_{| f |^{-1}(y,\infty)} d\mu \\
& = & y \int I_{| f |^{-1}(y,\infty)} d\overline{\mu} \\
& = & y \overline{\mu}(| f |^{-1}(y,\infty)) \\
& = & y \mu^*(| f |^{-1}(y,\infty)) \\
& \geq & \mu^*(| f |^{-1}(y,\infty))
\end{eqnarray*}
since $y \geq 1$, using Theorem~\ref{prop_1.8_basile2000} to interchange integration with respect to $\mu$ and $\overline{\mu}$. Hence $\mu^*(| f |^{-1}(y,\infty)) < \epsilon$, that is $f$ is smooth, and $f \in L_0(X,\calA,\mu)$ by Theorem~\ref{T1measurable_characterisation}. 

Let $\{ f^+_n \}_{n=1}^{\infty}$ be the sequence of functions asserted by Statement~3 of Theorem~\ref{T1measurable_characterisation} and note that by Theorem~\ref{T1_properties}, $f^+ \in L_0(X,\calA,\mu)$. Fix $\epsilon > 0$ and choose a new $y \in [0,\infty) \setminus D$ such that $| f | I_{| f |^{-1}(y,\infty)} \in L_1(X,\calA,\mu)$ and
\[
\int | f | I_{| f |^{-1}(y,\infty)} d\mu < \frac{\epsilon}{2}.
\]
By Corollary~\ref{fI_A}, $f^+ I_{(f^+)^{-1}(y,\infty)} \in L_1(X,\calA,\mu)$, and by Theorem~\ref{L1_integrable_properties},
\[
\int f^+ I_{(f^+)^{-1}(y,\infty)} d\mu \leq \int | f | I_{| f |^{-1}(y,\infty)} d\mu < \frac{\epsilon}{2}.
\]
Choose $k \in \N$ such that $y_k > y$ and $2^{1-k} < \epsilon/(2 \mu(X))$. Then for $m> k$ and $n > k$,
\begin{eqnarray*}
\int | f^+_m - f^+_n | d\mu & = & \int | f^+_m - f^+_n | I_{(f^+)^{-1}[0,y])} d\mu + \int | f^+_m - f^+_n | I_{(f^+)^{-1}(y,\infty)} d\mu \\
& \leq & 2^{1-k} \overline{\mu}((f^+)^{-1}[0,y]) + \int f^+ I_{(f^+)^{-1}(y,\infty)} d\mu \\
& < & \epsilon.
\end{eqnarray*}
Thus $\int | f^+_m - f^+_n | d\mu \rightarrow 0$ as $m, n \rightarrow \infty$. Similarly, $\int | f^-_m - f^-_n | d\mu \rightarrow 0$ as $m, n \rightarrow \infty$.

$(3 \implies 1)$ The functions $\{ f^+_n \}_{n=1}^\infty$ are simple functions with respect to $\overline{\calA}$ and hence form a determining sequence for $f^+$ with respect to $\overline{\calA}$. Thus $f^+ \in L_1(X,\overline{\calA},\overline{\mu}) = L_1(X,\calA,\mu)$, by Theorem~\ref{prop_1.8_basile2000}. Similarly $f^- \in L_1(X,\calA,\mu)$. By Theorem~\ref{L1_integrable_properties}, $f = f^+ - f^- \in L_1(X,\calA,\mu)$.
\qed \end{proof}

Again note that $f^+_n$ and $f^-_n$ can be alternatively defined as 
\[
f^+_n := \sum_{j=1}^{K_n-1} y_{n,j} I_{(f^+)^{-1}(y_{n,j},y_{n,j+1}]}
\] 
and
\[
f^-_n := \sum_{j=1}^{K_n-1} y_{n,j} I_{(f^-)^{-1}(y_{n,j},y_{n,j+1}]}
\] 
and the proof goes through unchanged. 

Condition~2 of the above theorem remains sufficient for $f \in L_1(X,\calA,\mu)$ if 2b is weakened to hold only for {\em some} $\epsilon > 0$. For then there exists $y \in (0,\infty)$ such that $| f | I_{| f |^{-1}(y, \infty)} \in L_1(X,\calA,\mu)$ and one may apply the theorem as currently stated to the latter function to obtain that 2b holds for {\em all} $\epsilon > 0$.

The following technical corollary is useful for bounding integrals. It generalises the easy fact that when one charge dominates another, the integrals of the former dominate the integrals of the latter. (This result has been useful to the author in concurrent work, and may be of more general interest.)

\begin{corollary} \label{orderintegrals}
Let $\mu_1$ and $\mu_2$ be charges defined on a common field of subsets $\calA$ of a set $X$. Consider $f : X \rightarrow [0,\infty)$ such that $f \in L_1(X,\calA,\mu_1)$ and $f \in L_1(X,\calA,\mu_2)$. Suppose $E$ is a dense subset of $(0,\infty)$ such that $f^{-1}(y,\infty) \in \overline{\calA}$ for all $y \in E$, and define $\calT := \{ f^{-1}(y,\infty) : y \in E \}$. If $\overline{\mu_1}(A) \leq \overline{\mu_2}(A)$ for each $A \in \calT$, then
\begin{enumerate}
\item $\int f d\mu_1 \leq \int f d\mu_2$, and
\item for each $A \in \calT$, $f I_A \in L_1(X,\calA,\mu_1)$ and $f I_A \in L_1(X,\calA,\mu_2)$ with 
\[
\int f I_A d\mu_1 \leq \int f I_A d\mu_2.
\] 
\end{enumerate}
\end{corollary}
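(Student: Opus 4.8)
The plan is to prove part~1 first and then obtain part~2 by applying part~1 to the truncation $fI_A$. The essential idea is to compare the integrals of a \emph{single} increasing sequence of simple functions that serves both charges, after rewriting those integrals so that only the ray charges $\overline{\mu_i}(f^{-1}(y,\infty))$ appear, since it is precisely these that the hypothesis controls.

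First I would construct a common determining sequence. Applying the construction of Statement~3 of Theorem~\ref{T1measurable_characterisation} to $f$ (note $f = f^+$ as $f \geq 0$), I choose every threshold $y_{n,j}$ to lie in $E$; this is possible because each interval $((j-1)2^{-n}, j2^{-n}]$ meets the dense set $E$, and the nesting condition can be preserved inductively. Because the thresholds lie in $E$, every set $f^{-1}(y_{n,j},\infty)$ belongs to $\calT$, and hence to the Peano--Jordan completion of $\calA$ with respect to each of $\mu_1$ and $\mu_2$; thus the resulting $f_n$ are simple functions for both charge spaces. They satisfy $0 \leq f_n \leq f$ and $f_n \xrightarrow{h} f$ with respect to each $\mu_i$ (hazy convergence following from the smoothness of $f$, which holds since $f \in L_0(X,\calA,\mu_i)$). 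As $|f_n| \leq f$ with $f \in L_1(X,\calA,\mu_i)$, dominated convergence (Theorem~\ref{dominated_convergence} with $p=1$) gives $\int f_n \, d\mu_i \to \int f \, d\mu_i$ for $i = 1,2$.

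Next I would compare the integrals of each $f_n$ termwise. Writing each level-set indicator as a difference of ray indicators and summing by parts, I obtain
\[
\int f_n \, d\mu_i = y_{n,1}\,\overline{\mu_i}(f^{-1}(y_{n,1},\infty)) + \sum_{j=2}^{K_n}(y_{n,j}-y_{n,j-1})\,\overline{\mu_i}(f^{-1}(y_{n,j},\infty)),
\]
a \emph{non-negative} combination of ray charges, since the thresholds increase strictly in $j$. As every $f^{-1}(y_{n,j},\infty)$ lies in $\calT$, the hypothesis $\overline{\mu_1} \leq \overline{\mu_2}$ applies termwise, giving $\int f_n \, d\mu_1 \leq \int f_n \, d\mu_2$ for each $n$. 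Letting $n \to \infty$ and using the convergence above proves part~1. For part~2, fix $A = f^{-1}(y_0,\infty) \in \calT$ with $y_0 \in E$; then $fI_A \in L_1(X,\calA,\mu_i)$ by Corollary~\ref{fI_A}, and a direct computation gives $(fI_A)^{-1}(y,\infty) = f^{-1}(\max\{y,y_0\},\infty)$, which lies in $\calT$ for every $y$ in the dense set $(0,y_0] \cup (E \cap (y_0,\infty))$. Thus $fI_A$ satisfies the hypotheses of the corollary, and part~1 applied to $fI_A$ yields the desired inequality.

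The main obstacle is that the hypothesis dominates only the ray charges, not the charges of the level sets $f^{-1}(y_{n,j},y_{n,j+1}]$ appearing directly in $f_n$: a difference of two valid dominations need not itself be dominated, so a naive termwise comparison of the two integrals fails. The summation by parts is exactly what re-expresses the simple integral as a non-negative combination of ray charges, on which domination \emph{is} preserved. A secondary point is that $\mu_1$ and $\mu_2$ have different Peano--Jordan completions in general, so one must check that the single sequence $\{f_n\}$ is admissible for both charge spaces; confining all thresholds to $E$ guarantees the relevant rays lie in both completions.
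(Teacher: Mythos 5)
Your proposal is correct and follows essentially the same route as the paper's proof: both arguments approximate $f$ from below by step functions whose thresholds lie in the dense set $E$, rewrite the step-function integral by summation by parts as a non-negative combination of the ray charges $\overline{\mu_1}(f^{-1}(y,\infty))$ and $\overline{\mu_2}(f^{-1}(y,\infty))$ so that the hypothesis applies termwise, and obtain part~2 exactly as you do, by noting that every ray preimage of $f I_A$ is again an element of $\calT$. The only divergence is the limiting step — the paper uses a single truncated step function $f_{\epsilon}$ with explicit error bounds, controlling the tail via Condition~2b of Theorem~\ref{integrable_characterisation}, whereas you run the full simple-function sequence of Theorem~\ref{T1measurable_characterisation} through dominated convergence (Theorem~\ref{dominated_convergence}) — and both versions are valid.
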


\begin{proof}
Fix $\epsilon > 0$. By Property~2b of Theorem~\ref{integrable_characterisation} and the density of $E$ in $(0,\infty)$, there is $y \in E$ such that $f I_{f^{-1}(y,\infty)} \in L_1(X,\calA,\mu_1)$ with $\int f I_{f^{-1}(y,\infty)} d\mu_1 < \frac{\epsilon}{2}$. Moreover, by the density of $E$ in $(0,\infty)$, there is an increasing sequence $\{ y_j \}_{j=1}^K \subseteq E$ with $K \in \N$, $y_K := y$ and $y_j \in ((j-1)\delta, j\delta]$ for each $j \in \{ 1, \ldots, K \}$, where $\delta := \frac{\epsilon}{4 \mu_1(X)}$. Form the function:
\[
f_{\epsilon} := \sum_{j=1}^{K-1} y_j I_{A_j \setminus A_{j+1}} + y I_{A_K}
\] 
where $A_j := f^{-1}(y_j,\infty) \in \overline{\calA}$ for each $j \in \{ 1, \ldots, K \}$. Note $0 \leq f(x) - f_{\epsilon}(x) < \frac{\epsilon}{2 \mu_1(X)}$ for all $x \in [0,y_K]$.

Then
\begin{eqnarray*}
\int f d\mu_2 & \geq & \int f_{\epsilon} d\mu_2 \\
& = & \sum_{j=1}^{K-1} y_j (\overline{\mu_2}(A_j) - \overline{\mu_2}(A_{j+1})) + y_K \overline{\mu_2}(A_K) \\
& = & y_1 \overline{\mu_2}(A_1) + \sum_{j=2}^{K} (y_j - y_{j-1}) \overline{\mu_2}(A_j) \\
& \geq & y_1 \overline{\mu_1}(A_1) + \sum_{j=2}^{K} (y_j - y_{j-1}) \overline{\mu_1}(A_j) \\
& = & \sum_{j=1}^{K-1} y_j (\overline{\mu_1}(A_j) - \overline{\mu_1}(A_{j+1})) + y_K \overline{\mu_1}(A_K) \\
& = & \int f_{\epsilon} d\mu_1 \\
& \geq & \int f_{\epsilon}I_{A_K^c} d\mu_1 \\
& > & \int \left( f - \frac{\epsilon}{2 \mu_1(X)} \right) I_{A_K^c} d\mu_1 \\  
& = & \int f d\mu_1 - \int f I_{A_K} d\mu_1 - \frac{\epsilon}{2 \mu_1(X)} \overline{\mu_1}(A_K^c) \\
& > & \int f d\mu_1 - \epsilon
\end{eqnarray*}
where the interchange of integration with respect to $\mu$ and $\overline{\mu}$ is justified by Theorem~\ref{prop_1.8_basile2000}. Let $\epsilon \rightarrow 0$ to obtain $\int f d\mu_1 \leq \int f d\mu_2$.

Note that for any $A \in \calT$, the function $f I_A$ satisfies:
\begin{enumerate}
\item $f I_A \in L_1(X,\calA,\mu_1)$ and $f I_A \in L_1(X,\calA,\mu_2)$, 
\item $(f I_A)^{-1}(y,\infty) \in \overline{\calA}$ for all $y \in E$, and
\item $\overline{\mu_1}((f I_A)^{-1}(y,\infty)) \leq \overline{\mu_2}((f I_A)^{-1}(y,\infty))$ for all $y \in E$.
\end{enumerate}
Claim~1 follows by Corollary~\ref{fI_A}. Claims~2 and~3 follow since $(f I_A)^{-1}(y,\infty) = f^{-1}(y,\infty) \in \calT$ if $A \supseteq f^{-1}(y,\infty)$ and $(f I_A)^{-1}(y,\infty) = A \in \calT$ if $A \subseteq f^{-1}(y,\infty)$. Hence by the above reasoning $\int f I_A d\mu_1 \leq \int f I_A d\mu_2$.
\qed \end{proof}

Lemma~\ref{countableatoms} also implies the following characterisation of equality a.e.

\begin{theorem} \label{equalityae_characterisation}
Consider a charge space $(X,\calA,\mu)$ and functions $f : X \rightarrow \R$ and $g : X \rightarrow \R$. Then the following statements hold.
\begin{enumerate}
\item If $f = g$ a.e, then $f I_A = g I_A$ a.e. for any $A \in \calP(X)$, 
\item if $A, B \in \calP(X)$ with $\mu^*(A \triangle B) = 0$, then $f I_A = f I_B$ a.e.,
\item If $f \in L_0(X,\calA,\mu)$, there exists a countable $C \subset \R$ such that the following statements are logically equivalent:
\begin{enumerate}
\item $f = g$ a.e,
\item $\mu^*(f^{-1}(y,\infty) \triangle g^{-1}(y,\infty)) = 0$ for all $y \in \R \setminus C$, and
\item $f I_{f^{-1}(y,\infty)} = g I_{g^{-1}(y,\infty)}$ a.e. for all $y \in \R \setminus C$.
\end{enumerate}
\end{enumerate}
\end{theorem}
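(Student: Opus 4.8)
The plan is to set $C := \{ y \in \R : \lim_{\delta \to 0} \mu^*(f^{-1}[y-\delta,y+\delta]) > 0 \}$, which is countable by Lemma~\ref{countableatoms}, so that for every $y \in \R \setminus C$ both $\lim_{\delta\to 0}\mu^*(f^{-1}[y-\delta,y+\delta]) = 0$ and, via Lemma~\ref{atomimpliesrayinverseimage}, $f^{-1}(y,\infty) \in \overline{\calA}$. Parts~1 and~2 need only monotonicity and finite subadditivity of $\mu^*$: for Part~1, $|(f-g)I_A| \leq |f-g|$ pointwise, so $\{ x : |(f-g)I_A| > \epsilon \} \subseteq \{ x : |f-g| > \epsilon \}$ is null; for Part~2, $f(I_A - I_B)$ vanishes off $A \triangle B$, so $\{ x : |fI_A - fI_B| > \epsilon \} \subseteq A \triangle B$ is null.

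For Part~3 I would prove (a)$\Leftrightarrow$(b), then (a)$\Rightarrow$(c), then (c)$\Rightarrow$(a). For (a)$\Rightarrow$(b), fix $y \in \R\setminus C$ and $\epsilon>0$; splitting $f^{-1}(y,\infty)\triangle g^{-1}(y,\infty)$ according to whether $|f-g|>\epsilon$ yields the inclusion into $\{|f-g|>\epsilon\}\cup f^{-1}[y-\epsilon,y+\epsilon]$, so $\mu^*(f^{-1}(y,\infty)\triangle g^{-1}(y,\infty)) \leq \mu^*(f^{-1}[y-\epsilon,y+\epsilon]) \to 0$ as $\epsilon\to 0$ since $y\notin C$. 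The converse (b)$\Rightarrow$(a) carries the first real difficulty. The tempting route of covering $\{f-g>\epsilon\}$ by the symmetric differences at a countable dense set of thresholds fails, because $\mu^*$ is only finitely subadditive and a countable union of null sets need not be null. I would instead exploit the smoothness of $f$ (valid since $f\in L_0$): on $\{|f|\leq M\}$ every required straddling threshold lies in the bounded interval $[-M-1,M]$, so only finitely many thresholds, chosen in $\R\setminus C$, are needed, and finite subadditivity gives $\mu^*(\{f-g>\epsilon\}) \leq \mu^*(\{|f|>M\})\to 0$; the set $\{g-f>\epsilon\}$ is handled symmetrically.

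The implication (a)$\Rightarrow$(c) then follows formally: with $A_y := f^{-1}(y,\infty)$ and $B_y := g^{-1}(y,\infty)$, Part~1 gives $fI_{A_y} = gI_{A_y}$ a.e., and Part~2 applied to $\mu^*(A_y\triangle B_y)=0$ (which is (b)) gives $gI_{A_y} = gI_{B_y}$ a.e., so $fI_{A_y} = gI_{B_y}$ a.e. by transitivity. The remaining implication (c)$\Rightarrow$(a) is where I expect the main obstacle. For $y>0$ the identity $fI_{f^{-1}(y,\infty)} = f^+ I_{(f^+)^{-1}(y,\infty)}$ turns (c) into the corresponding statement for the nonnegative functions $f^+,g^+$, from which the smoothness-and-finite-grid argument (now with thresholds in a bounded positive range) recovers $f^+ = g^+$ a.e. Reducing $f=g$ a.e. to $f^-=g^-$ a.e., however, the content of (c) at negative thresholds only delivers $f^- I_{\{f^-<z\}} = g^- I_{\{g^-<z\}}$ a.e. for $z$ in a dense set, which pins down $g^-$ only where it is bounded; passing to $z\to\infty$ via hazy convergence and Theorem~\ref{hazy_uniqueness} to conclude $g^-=f^-$ a.e. appears to require $\mu^*(\{g^-\geq z\})\to 0$, i.e. smoothness of the lower tail of $g$, since mass of $g$ escaping to $-\infty$ is invisible to the upper rays $f^{-1}(y,\infty)$. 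I would therefore concentrate effort here, either deriving the needed smoothness of $g$ from the hypotheses in force or invoking the symmetric lower-ray forms of Lemmas~\ref{atomimpliesrayinverseimage}--\ref{countableexceptions} to control the negative tail directly.
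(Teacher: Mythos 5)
Your treatment of Parts~1 and~2, of the equivalence (a)$\iff$(b), and of the implication (a)$\implies$(c) is correct and coincides with the paper's own proof in every essential respect: the same countable set $C$ taken from Lemma~\ref{countableatoms}, the same straddling-threshold inclusion for (a)$\implies$(b), the same smoothness-plus-finite-net argument for (b)$\implies$(a) (finitely many thresholds chosen in $\R \setminus C$ inside a bounded range, finite subadditivity, and the tail $\{|f|>M\}$ controlled by smoothness of $f$), and the same two applications of Parts~1 and~2 plus transitivity for (a)$\implies$(c).

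The gap you flag at (c)$\implies$(a) is genuine, and your diagnosis is exactly right --- moreover, the missing lower-tail control cannot be derived from the hypotheses, because the implication is false as stated. Take the paper's own example space $(\N,\calA,\nu)$ ($\calA$ the finite--cofinite field, $\nu=0$ on finite sets), with $f \equiv 0 \in L_0(\N,\calA,\nu)$ and $g(n) := -n$. For every $y \in \R$ the set $g^{-1}(y,\infty)$ is finite, so $g I_{g^{-1}(y,\infty)}$ is a null function, as is $f I_{f^{-1}(y,\infty)} \equiv 0$; hence (c) holds at \emph{every} $y \in \R$, whatever countable $C$ is chosen. Yet $(f-g)(n)=n$ is not a null function, so (a) fails, and (b) fails at every $y<0$ outside $C$; no countable $C$ can make the three statements equivalent. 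The paper's own proof of (c)$\implies$(b) errs precisely where you predicted: setting $g_z := g I_{g^{-1}(z,\infty)}$, it treats $g_z^{-1}(y,\infty)$ as interchangeable with $g^{-1}(y,\infty)$, but for $y<0$ one has $g_z^{-1}(y,\infty) = g^{-1}(y,\infty) \cup g^{-1}(-\infty,z]$, because $g_z$ vanishes (and $0>y$) wherever $g \leq z$; the extra set $g^{-1}(-\infty,z]$ is exactly the lower tail of $g$, about which nothing is assumed (in the example it has outer charge $1$). Your two proposed remedies are the right ones: assuming $g$ smooth restores the implication, since then $\mu^*(g^{-1}(-\infty,z]) \rightarrow 0$ as $z \rightarrow -\infty$ repairs the identification above and yields (b); alternatively one can adjoin the lower-ray conditions $f I_{f^{-1}(-\infty,y)} = g I_{g^{-1}(-\infty,y)}$ a.e.\ to (c). Note also that for non-negative $f$ and $g$ the difficulty disappears entirely, since then (c) at any negative threshold literally asserts $f = g$ a.e.
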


\begin{proof}
For 1, note $| f I_A - g I_A | = | f - g | I_A \leq | f - g |$. Hence for any $\epsilon > 0$, 
\[
\mu^*(\{ x : | (f I_A)(x) - (g I_A)(x) | > \epsilon \}) \leq \mu^*(\{ x : | f(x) - g(x) | > \epsilon \}) = 0.
\]

For 2, note 
\[
| f I_A - f I_B | = | f | | I_A - I_B | = | f | I_{A \triangle B}.
\]
Hence for any $\epsilon > 0$,
\[
\mu^*(\{ x : | (f I_A)(x) - (f I_B)(x) | > \epsilon \}) \leq \mu^*(A \triangle B) = 0.
\]

For 3, let $C$ be the countable subset of $\R$ mentioned in Lemma~\ref{countableatoms}. 

$(3a \implies 3b)$ Choose $y \in \R \setminus C$ and $\epsilon > 0$, then there exists $\delta > 0$ such that $\mu^*(f^{-1}[y-\delta,y+\delta]) < \epsilon$. Now
\begin{eqnarray*}
x \in f^{-1}(y,\infty) \triangle g^{-1}(y,\infty) & \iff & (f(x) > y \mbox{ and } g(x) \leq y ) \mbox{ or } \\
& & (f(x) \leq y \mbox{ and } g(x) > y ) \\
& \implies & x \in f^{-1}[y-\delta,y+\delta] \mbox{ or } \\
& & | f(x) - g(x) | > \delta,
\end{eqnarray*}
hence 
\[
\mu^*(f^{-1}(y,\infty) \triangle g^{-1}(y,\infty)) \leq \mu^*(f^{-1}[y-\delta,y+\delta]) < \epsilon.
\]
Let $\epsilon \rightarrow 0$ to obtain the result.

$(3b \implies 3a)$ For each $x \in X$ with $g(x) \neq f(x)$, one of the following cases applies. Consider the case $g(x) > f(x)$. Then for any $y \in (f(x), g(x)) \setminus C$, $x \in g^{-1}(y,\infty) \setminus f^{-1}(y,\infty)$. Next consider the case $g(x) < f(x)$. Then for any $y \in (g(x), f(x)) \setminus C$, $x \in f^{-1}(y,\infty) \setminus g^{-1}(y,\infty)$. In either case, $x \in f^{-1}(y,\infty) \triangle g^{-1}(y,\infty)$. Fix $\epsilon > 0$ and $\delta > 0$. By the smoothness of $f$, there is $y_0 \in (0,\infty)$ such that $\mu^*(\{x : |f(x)| > y_0\}) < \delta$. Moreover, by the density of $C$ in $\R$, there is a finite set $\{ y_j \}_{j=1}^{K} \subset \R \setminus C$ such that $\max \{ | y - y_k | : k \in \{ 1, \ldots, K \} \} < \epsilon$ for all $y \in [-y_0,y_0]$. Then
\begin{eqnarray*}
\mu^*(\{ x : | f(x) - g(x) | > \epsilon\}) & \leq & \mu^*(\{ x \in f^{-1}[-y_0,y_0] : | f(x) - g(x) | > \epsilon\}) \\
& & + \mu^*(\{x : |f(x)| > y_0\}) \\
& < & \sum_{j=1}^K \mu^*(f^{-1}(y_j,\infty) \triangle g^{-1}(y_j,\infty)) + \delta \\
& = & \delta,
\end{eqnarray*}
Letting $\delta \rightarrow 0$ gives $f = g$ almost everywhere. 

$(3a \implies 3c)$ Observe that $f I_{f^{-1}(y,\infty)} = g I_{f^{-1}(y,\infty)}$ a.e. by 1, and $g I_{f^{-1}(y,\infty)} = g I_{g^{-1}(y,\infty)}$ a.e. by 2 and 3b (which is implied by 3a), for any $y \in \R \setminus C$.

$(3c \implies 3b)$ Consider $y \in \R \setminus C$. Then there is $z \in (-\infty,y) \setminus C$. Define $f_z := f I_{f^{-1}(z,\infty)}$ and $g_z := g I_{g^{-1}(z,\infty)}$. Then $f_z = g_z$ a.e. and moreover 
\[
\lim_{\delta \rightarrow 0} \mu^*(f_z^{-1}[y-\delta,y+\delta]) = \lim_{\delta \rightarrow 0} \mu^*(f^{-1}[y-\delta,y+\delta]) = 0.
\]
Reasoning as in $(3a \implies 3b)$ above gives 
\[
\mu^*(f^{-1}(y,\infty) \triangle g^{-1}(y,\infty)) = \mu^*(f_z^{-1}(y,\infty) \triangle g_z^{-1}(y,\infty)) = 0.
\]
\qed \end{proof}

This section has highlighted several properties of the chains of inverse images induced by functions in $L_p$ spaces over a general bounded charge space. Further study of such chains in a general context seems a promising direction for future research, but is beyond the scope of this paper.

\section{Complete $L_p$ spaces}
\label{completeness_section}

It is not in general true that the metric space $L_p(X,\calA,\mu)$ is complete, where $(X,\calA,\mu)$ is a charge space and $p \in \{0\} \cup [1,\infty)$. See~\cite{gangopadhyay1990}, \cite{gangopadhyay1999} and especially~\cite{basile2000} for helpful discussions of complete $L_p$ spaces. However, the wider literature on completeness of $L_p$ spaces should be approached with caution, as it contains a number of errors. For example, Proposition 5.2 in a paper by Armstrong~\cite{armstrong1991} claims the following are logically equivalent for a bounded charge space: (i) $L_1(X,\calA,\mu)$ is complete, (ii) $\mu$ induces a countably additive set function on the Boolean quotient $\calA / \calM$, where $\calM := \mu^{-1}(0)$ is the {\em kernel} of $\mu$, and (iii) $\overline{\calA}$ is a $\sigma$-field. However, as Basile and Bhaskara Rao~\cite{basile2000} show, the only true implication in this proposition is (i) $\implies$ (ii); counter-examples are available for each of the others. This error has been propagated across multiple papers. Another characterisation of completeness, due to Green~\cite{green1971}, was shown to be false in~\cite{bhaskararao1987}.

This section contains a new characterisation of completeness for $L_p$ spaces constructed on a bounded, non-negative charge space (Theorem~\ref{completeness_characterisation}), in terms of a representation of $\calA / \calM$ on a field of sets. In addition, a sufficient condition for completeness is proved, correcting the false condition (ii) $\implies$ (i) of Armstrong just mentioned. The correction involves the auxiliary condition that $\calA / \calM$ be a countably complete Boolean algebra. 

Since this section makes frequent reference to Boolean algebras and their representations, the following brief discussion of relevant terminology and notation, with particular reference to charge spaces, may be helpful. 

Any field of sets $\calA \subseteq \calP(X)$ on an arbitrary set $X$ forms a Boolean algebra with pairwise intersection $\cap$ as the {\em meet} operator $\wedge$, pairwise union $\cup$ as the {\em join} operator $\vee$, set complement $^c$ as the Boolean complement operator~$^{\prime}$, the empty set $\emptyset$ as the zero and $X$ as the unit. Note also that the symmetric set difference $p \triangle q$ is an exclusive disjunction $p + q := (p \wedge q^{\prime}) \vee (p^{\prime} \wedge q)$ and the subset relation $p \subseteq q$ is a Boolean partial order $p \leq q$.

Stone's representation theorem (originally proved in~\cite{stone1937}, but~\cite{givant2009} and~\cite{sikorski1969boolean} contain helpful expositions) is a fundamental result asserting that every abstract Boolean algebra $\calA$ is isomorphic to a field of sets. More specifically, $\calA$ can be embedded in $\calP(S_{\calA})$, where $S_{\calA}$ is a set called the {\em Stone space} of $\calA$. Stone spaces can be constructed in various equivalent ways, but the details are not needed here. It will be sufficient throughout what follows to work with an unspecified {\em representation} of $\calA$, that is, a Boolean isomorphism $\phi : \calA \rightarrow \calP(Y)$ embedding $\calA$ into the power set of some set $Y$.

For any charge space $(X,\calA,\mu)$, the set $\calM := \mu^{-1}(0) := \{ A \in \calA : \mu(A) = 0 \}$, called the {\em kernel} of $\mu$, is a Boolean ideal of $\calA$, and the collection of {\em null sets} $\calN := \{ A \in \calP(X) : \mu^*(A) = 0 \}$ is a Boolean ideal of $\calP(X)$.

Suppose $\calA$ is a Boolean algebra and $\calM$ is a Boolean ideal of $\calA$. Let $[A] \in \calA / \calM$ denote the equivalence class of $A \in \calA$ under the equivalence relation $A \sim B \iff A + B \in \calM$. If the quotient space $\calA / \calM$ is potentially ambiguous, the notation $[A]_{\calA / \calM}$ will be used to clarify.

Suppose $\phi : \calA / \calM \rightarrow \calP(Y)$ is an embedding of the Boolean algebra $\calA / \calM$ in the power set of some set $Y$ (which may or may not be the Stone space $S_{\calA / \calM}$). Define $\langle A \rangle := \phi([A])$ for all $A \in \calA$. If the embedding $\phi$ is potentially ambiguous, it will be indicated by a subscript as $\langle A \rangle_{\phi}$. 

For any $\calA^{\prime} \subseteq \calA$ (not necessarily a sub-algebra), define $[\calA^{\prime}] := \{ [A] : A \in \calA^{\prime} \}$. Similarly, define $\langle \calA^{\prime} \rangle := \{ \langle A \rangle : A \in \calA^{\prime} \}$. 

If $(X,\calA,\mu)$ is a charge space and $\calM$ is the kernel of $\mu$, the induced function $\mu : \calA / \calM \rightarrow \R$ given by $\mu[ A ] := \mu(A)$ for all $A \in \calA$ is finitely additive. Similarly, if $\phi : \calA / \calM \rightarrow \calP(Y)$ is a representation of $\calA / \calM$, one may define a charge $\mu$ on $\langle \calA \rangle_{\phi}$ by $\mu\langle A \rangle_{\phi} := \mu(A)$ for all $A \in \calA$, so that $(Y,\langle \calA \rangle_{\phi},\mu)$ is a charge space. Note that wherever possible in what follows, parentheses enclosing a function argument will be omitted when that argument is already enclosed in square or angled brackets; for example, $\mu[ A ]$ and $\mu\langle A \rangle$ will be preferred to $\mu([ A ])$ and $\mu(\langle A \rangle)$

A Boolean algebra $\calA$ is said to be {\it countably complete} if every countable subset $\{ p_k \}_{k=1}^{\infty}$ in $\calA$ has a least upper bound in $\calA$.

A {\em countably additive function} $\mu$ on a countably complete Boolean algebra $\calA$ is a generalisation of a non-negative measure: it is a function $\mu: \calA \rightarrow \R^+$ such that $\mu(0) = 0$ and $\mu(\vee_{k=1}^{\infty} p_k) = \sum_{k=1}^{\infty} \mu(p_k)$ for any pairwise disjoint sequence $\{ p_k \}_{k=1}^{\infty} \subseteq \calA$. In this paper, it will be assumed that a countably additive function is bounded, that is, $\mu(1) < \infty$, unless otherwise stated.

First it will be useful to show that a representation of a countably complete Boolean algebra can be transformed into a measure space as follows.  

\begin{lemma} \label{sigma_embedding}
Let $\calA$ be a countably complete Boolean algebra and let $\mu : \calA \rightarrow \R^+$ be a countably additive function. Suppose $\phi : \calA \rightarrow \calP(Y)$ is a representation of $\calA$. Then $(Y,\overline{\phi(\calA)},\overline{\mu})$ is a complete measure space, where $\mu : \phi(\calA) \rightarrow \R^+$ is the charge defined by $\mu(\phi(A)) := \mu(A)$ for all $A \in \calA$. 
\end{lemma}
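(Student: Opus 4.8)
The plan is to show that the Peano-Jordan completion $(Y,\overline{\phi(\calA)},\overline{\mu})$ is a genuine measure space --- that is, that $\overline{\phi(\calA)}$ is a $\sigma$-field and $\overline{\mu}$ is countably additive on it --- and then to invoke Lemma~\ref{equivalent_completeness}, noting that Peano-Jordan completion is idempotent, so $(Y,\overline{\phi(\calA)},\overline{\mu})$ is automatically Peano-Jordan complete.

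First I would dispose of the preliminaries. Because $\phi$ is a Boolean isomorphism, $\mu(\phi(A)):=\mu(A)$ is well defined and finitely additive on the field $\phi(\calA)\subseteq\calP(Y)$, with $\phi(1)=Y$ and $\mu(\phi(1))=\mu(1)<\infty$, so $(Y,\phi(\calA),\mu)$ is a bounded non-negative charge space and the outer charge $\mu^*$ and completion $\overline{\mu}$ are available. The conceptual heart is the discrepancy between Boolean joins and set unions: if $\{A_k\}$ is a disjoint sequence in $\calA$ with Boolean join $A=\vee_k A_k$ (which exists since $\calA$ is countably complete), then order preservation gives $\bigcup_k\phi(A_k)\subseteq\phi(A)$, while countable additivity of $\mu$ gives $\mu(\phi(A))=\mu(A)=\sum_k\mu(A_k)=\sum_k\mu(\phi(A_k))$. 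Since each finite tail $\phi(A)\setminus\bigcup_{k\le n}\phi(A_k)$ lies in $\phi(\calA)$ with measure tending to $0$, the set $\phi(A)\setminus\bigcup_k\phi(A_k)$ is $\mu^*$-null. In short, countable additivity forces the gap between the representing set of a join and the union of the representing sets to be null, and Peano-Jordan completion absorbs it.

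The main step is to prove $\overline{\phi(\calA)}$ is closed under countable unions. As $\overline{\phi(\calA)}$ is already a field, I would disjointify and reduce to a disjoint sequence $\{E_k\}\subseteq\overline{\phi(\calA)}$ with union $E$. Fixing $\epsilon>0$, for each $k$ I would choose $A_k\in\calA$ with $E_k\subseteq\phi(A_k)$ and $\mu(A_k)<\overline{\mu}(E_k)+\epsilon 2^{-k}$, and $B_k\in\phi(\calA)$ with $B_k\subseteq E_k$ and $\mu(B_k)>\overline{\mu}(E_k)-\epsilon 2^{-k}$; the $B_k$ are disjoint, being contained in the disjoint $E_k$. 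Taking the outer cover $C:=\phi(A)$ with $A:=\vee_k A_k$, and disjointifying the $A_k$ inside $\calA$, countable additivity yields $\mu(A)\le\sum_k\mu(A_k)<\sum_k\overline{\mu}(E_k)+\epsilon$, where the series converges because its partial sums equal $\overline{\mu}(\bigcup_{k\le n}E_k)\le\mu(1)$. For the inner cover I would take $B:=\bigcup_{k\le n}B_k\in\phi(\calA)$; then $B\subseteq E\subseteq C$ and $\mu(C\setminus B)=\mu(A)-\sum_{k\le n}\mu(B_k)\le\sum_{k>n}\overline{\mu}(E_k)+2\epsilon$, which falls below $3\epsilon$ once $n$ is large, by convergence of the series. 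Hence $E\in\overline{\phi(\calA)}$, so $\overline{\phi(\calA)}$ is a $\sigma$-field. The same estimate gives $\overline{\mu}(E)\le\mu(A)\le\sum_k\overline{\mu}(E_k)+\epsilon$, and letting $\epsilon\to0$ yields $\overline{\mu}(E)\le\sum_k\overline{\mu}(E_k)$; the reverse inequality is immediate from finite additivity and monotonicity of the charge $\overline{\mu}$, so $\overline{\mu}$ is countably additive.

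With $\overline{\phi(\calA)}$ a $\sigma$-field and $\overline{\mu}$ a measure, $(Y,\overline{\phi(\calA)},\overline{\mu})$ is a measure space, and since it equals its own Peano-Jordan completion by idempotency, Lemma~\ref{equivalent_completeness} identifies it as a \emph{complete} measure space. I expect the principal obstacle to be the bookkeeping in the main step --- keeping the outer Boolean join and the inner finite unions compatible so that $\mu(C\setminus B)$ can be controlled --- and, underlying it, the coordinated use of countable completeness of $\calA$ (to form the join) with countable additivity of $\mu$ (to bound its measure), which is precisely what bridges the abstract Boolean setting and the set-theoretic $\sigma$-additivity demanded of a measure space.
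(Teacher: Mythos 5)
Your proposal is correct and follows essentially the same route as the paper: sandwich a countable disjoint union in $\overline{\phi(\calA)}$ between a finite union of inner approximants and the image of a countable Boolean join of outer approximants, use countable additivity of $\mu$ on $\calA$ to control the join and the tail of the convergent series, then verify countable additivity of $\overline{\mu}$ and conclude completeness via Lemma~\ref{equivalent_completeness}. The only differences are cosmetic (you center estimates on $\overline{\mu}(E_k)$ rather than on the gaps $\mu(C_k)-\mu(B_k)$, disjointify explicitly instead of citing the monotone-class theorem, and make the idempotency step explicit).
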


\begin{proof}
Let $\{ A_k \}_{k=1}^{\infty} \subseteq \overline{\phi(\calA)}$ be pairwise disjoint, and set $A := \cup_{k=1}^{\infty} A_k$. For each $k \in \N$ and $\epsilon > 0$, there exist $B_k, C_k \in \calA$ such that $\phi(B_k) \subseteq A_k \subseteq \phi(C_k)$ and $\mu(C_k) - \mu(B_k) < (\epsilon / 2) 2^{-k}$. Then $\{ B_k \}_{k=1}^{\infty}$ are pairwise disjoint, and $\mu(\vee_{k=1}^{\infty} B_k) = \sum_{k=1}^{\infty} \mu(B_k)$. Choose $K \in \N$ such that $\sum_{k=1}^K \mu(B_k) > \sum_{k=1}^{\infty} \mu(B_k) - \epsilon / 2$, and set $B := \vee_{k=1}^{K} B_k$ and $C := \vee_{k=1}^{\infty} C_k$. Then $\phi(B) \subseteq A \subseteq \phi(C)$, where the latter containment follows because $\phi(C)$ is an upper bound for $\{ A_k \}_{k=1}^{\infty}$, but $A$ is the least upper bound for $\{ A_k \}_{k=1}^{\infty}$ in $\calP(Y)$. Moreover,
\[
\mu(C) - \mu(B) \leq \sum_{k=1}^{\infty} (\mu(C_k) - \mu(B_k)) + \left(\sum_{k=1}^{\infty} \mu(B_k) - \mu(B)\right) < \epsilon.
\]
Hence $A \in \overline{\phi(\calA)}$, and $\overline{\phi(\calA)}$ is a monotone class and therefore also a $\sigma$-field (see \cite[Thm. A, p. 27]{halmos2013measure}). Also note 
\[
\overline{\mu}(A) \leq \mu(C) \leq \sum_{k=1}^{\infty} \mu(C_k) < \sum_{k=1}^{\infty} \overline{\mu}(A_k) + \epsilon/2
\]
and 
\[
\sum_{k=1}^{\infty} \overline{\mu}(A_k) = \lim_{k \rightarrow \infty} \overline{\mu}(\cup_{j=1}^k A_k) \leq \overline{\mu}(A),
\]
implying $\overline{\mu}$ is countably additive on $\overline{\phi(\calA)}$. Thus $(Y,\overline{\phi(\calA)},\overline{\mu})$ is a measure space. It is a complete measure space by Lemma~\ref{equivalent_completeness}.
\qed \end{proof}

The preceding lemma provides a sufficient condition for an $L_p$ space to be complete, described in the following theorem, which also identifies a condition that is both necessary and sufficient.

\begin{theorem} \label{completeness_characterisation}
Consider a charge space $(X,\calA,\mu)$, and let $\calM := \mu^{-1}(0)$. Suppose $\phi: \calA /\calM \rightarrow \calP(Y)$ is a representation of $\calA /\calM$. Then the following statements hold.
\begin{enumerate}
\item $L_1(X,\calA,\mu)$ is complete if and only if $(Y,\overline{\langle \calA \rangle_{\phi}}, \overline{\mu})$ is a measure space, where $\mu : \langle \calA \rangle_{\phi} \rightarrow \R^+$ is the charge defined by $\mu\langle A \rangle_{\phi} := \mu(A)$ for all $A \in \calA$. 

\item If $\calA / \calM$ is a countably complete Boolean algebra and the function $\mu : \calA / \calM \rightarrow \R^+$, defined by $\mu[ A ]_{\calA / \calM} := \mu(A)$ for all $A \in \calA$, is a countably additive function, then $L_1(X,\calA,\mu)$ is complete.
\end{enumerate}
\end{theorem}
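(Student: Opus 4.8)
The plan is to prove both parts by transporting the completeness criterion of Theorem~\ref{Lp_complete} across the representation $\phi$ to the charge space $(Y, \langle \calA \rangle_\phi, \mu)$, exploiting the fact that this charge is \emph{strictly positive}: if $\mu\langle A \rangle_\phi = 0$ then $\mu(A) = 0$, so $[A]_{\calA/\calM} = 0$ and hence $\langle A \rangle_\phi = \phi([A]) = \emptyset$; thus the empty set is the only member of the field $\langle \calA \rangle_\phi$ of zero charge. First I would observe that statement~3 of Theorem~\ref{Lp_complete} depends only on the quotient $\calA / \calM$: the condition $\mu(A_k \setminus D) = 0$ is equivalent to $[A_k] \leq [D]$ in $\calA / \calM$, while the inequalities bounding $\mu(D)$ involve only the induced values $\mu[A_k]$ and $\mu[D]$. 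Since every Boolean-increasing sequence in $\calA / \calM$ admits genuinely increasing representatives in $\calA$ (replace $A_k$ by $A_1 \cup \cdots \cup A_k$), and $\phi$ is a charge-preserving Boolean isomorphism onto $\langle \calA \rangle_\phi$, statement~3 holds for $(X,\calA,\mu)$ if and only if it holds for $(Y, \langle \calA \rangle_\phi, \mu)$. By strict positivity the latter says exactly: for every increasing $\langle A_1 \rangle_\phi \subseteq \langle A_2 \rangle_\phi \subseteq \cdots$ in $\langle \calA \rangle_\phi$ of bounded charge and every $\epsilon > 0$, there is $D \in \langle \calA \rangle_\phi$ with $\langle A_k \rangle_\phi \subseteq D$ for all $k$ and $\lim_k \mu\langle A_k \rangle_\phi \leq \mu(D) \leq \lim_k \mu\langle A_k \rangle_\phi + \epsilon$.

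For Part~1 it then remains to show this strictly positive form of statement~3 is equivalent to $(Y, \overline{\langle \calA \rangle_\phi}, \overline{\mu})$ being a measure space. For the easy direction, if the completion is a measure space then, given increasing $\langle A_k \rangle_\phi$, I would set $A := \bigcup_k \langle A_k \rangle_\phi \in \overline{\langle \calA \rangle_\phi}$, use continuity from below to get $\overline{\mu}(A) = \lim_k \mu\langle A_k \rangle_\phi$, and extract $D \in \langle \calA \rangle_\phi$ from the infimum description of the Peano--Jordan completion (Definition~\ref{PJ_completion}) with $A \subseteq D$ and $\mu(D) < \overline{\mu}(A) + \epsilon$. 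The substantive direction is the converse, which I expect to be the main obstacle. Assuming the strictly positive statement~3, I must show $\overline{\langle \calA \rangle_\phi}$ is closed under countable increasing unions (hence a $\sigma$-field, being a field closed under such unions) and that $\overline{\mu}$ is continuous from below (hence countably additive). For a sequence drawn from the original field this is direct: taking $B := \langle A_K \rangle_\phi$ for large $K$ as an inner approximation and $D$ from statement~3 as an outer approximation yields $\mu(D \setminus B) \leq \epsilon$, placing $A = \bigcup_k \langle A_k \rangle_\phi$ in $\overline{\langle \calA \rangle_\phi}$ with $\overline{\mu}(A) = \lim_k \mu\langle A_k \rangle_\phi$.

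To upgrade this to an arbitrary increasing sequence $A_k \in \overline{\langle \calA \rangle_\phi}$, I would approximate each $A_k$ from inside by $B_k \in \langle \calA \rangle_\phi$ with $\overline{\mu}(A_k) - \mu(B_k) < \epsilon 2^{-k}$, replace $B_k$ by $B_1 \cup \cdots \cup B_k$ to make them increasing, and apply the previous paragraph to obtain $B := \bigcup_k B_k \in \overline{\langle \calA \rangle_\phi}$. Subadditivity of the outer charge then bounds $\mu^*(A \setminus B) \le \sum_k \overline{\mu}(A_k \setminus B_k) < \epsilon$, where $A := \bigcup_k A_k$, and sandwiching $A$ between an inner approximation of $B$ and an outer approximation of $B$ enlarged by a small cover of $A \setminus B$ shows $A \in \overline{\langle \calA \rangle_\phi}$ with $\overline{\mu}(A) = \lim_k \overline{\mu}(A_k)$; letting $\epsilon \to 0$ gives continuity from below. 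The bookkeeping in this approximation argument, and verifying that the inner and outer estimates glue to give membership in $\overline{\langle \calA \rangle_\phi}$, is where the real care is required.

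For Part~2, the hypotheses are precisely those of Lemma~\ref{sigma_embedding} applied to the countably complete Boolean algebra $\calA / \calM$, the countably additive function $\mu$ on it, and the representation $\phi$. That lemma immediately gives that $(Y, \overline{\phi(\calA/\calM)}, \overline{\mu}) = (Y, \overline{\langle \calA \rangle_\phi}, \overline{\mu})$ is a complete measure space, in particular a measure space. Part~1 then yields completeness of $L_1(X,\calA,\mu)$, so Part~2 follows with no further work.
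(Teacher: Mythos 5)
Your overall architecture closely mirrors the paper's: the transport of statement~3 of Theorem~\ref{Lp_complete} across $\phi$ (exploiting strict positivity of the induced charge on $Y$) is valid, your easy direction is the paper's $(\impliedby)$ argument, and your Part~2 (Lemma~\ref{sigma_embedding} followed by Part~1) is word-for-word the paper's. The genuine gap is in the upgrade step of the substantive direction, where you write that ``subadditivity of the outer charge'' bounds $\mu^*(A \setminus B) \le \sum_k \overline{\mu}(A_k \setminus B_k)$. That is \emph{countable} subadditivity, and it is false for outer charges: the paper (following Proposition~4.1.4 of \cite{bhaskararao1983}) asserts only finite subadditivity, and no more is available. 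The paper's own example $(\N,\calA,\nu)$, with $\calA$ the finite/cofinite field and $\nu$ zero on finite sets, shows the failure: every singleton is $\nu^*$-null, yet $\nu^*(\N)=1$, so $\nu^*\bigl(\cup_n \{n\}\bigr) \not\le \sum_n \nu^*(\{n\})$. The same phenomenon occurs for strictly positive charges, e.g.\ Lebesgue measure on the field of finite unions of intervals in $[0,1]$: intervals $I_k$ of length $\epsilon 2^{-k}$ covering the rationals satisfy $\sum_k \mu^*(I_k)=\epsilon$ while $\mu^*(\cup_k I_k)=1$, since any finite union of intervals containing a dense set has charge $1$. This failure of countable subadditivity is exactly the obstruction that makes the theorem nontrivial, so the step cannot be passed off as bookkeeping.

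The gap is repairable, but only by re-invoking the completeness hypothesis, which is what the paper does and your draft does not. Concretely: cover each $A_k \setminus B_k$ by a field element $\langle C_k \rangle_{\phi}$ with $\mu\langle C_k \rangle_{\phi} < \epsilon 2^{1-k}$ (Peano--Jordan outer approximation), then apply the transported statement~3 to the \emph{increasing field sequence} $\langle C_1 \rangle_{\phi} \cup \cdots \cup \langle C_k \rangle_{\phi}$, whose charges are bounded by $2\epsilon$, to obtain a single $D' \in \langle \calA \rangle_{\phi}$ containing every $\langle C_j \rangle_{\phi}$ with $\mu(D') \le 3\epsilon$; then $A \setminus B \subseteq D'$, and your sandwich argument closes with $D \cup D'$ as the outer approximant. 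The paper's proof organizes this more efficiently: instead of treating field sequences first and approximating afterwards, it approximates the completion elements at the outset by $\langle B_k \rangle \subseteq A_k \subseteq \langle C_k \rangle$ and applies Theorem~\ref{Lp_complete}(3) once, on $X$, to the increasing sequence $\cup_{j=1}^k C_j \in \calA$, which manufactures the outer approximating set directly; countable subadditivity never enters. Until your approximation step is rewritten along these lines, the forward implication of Part~1 is not proved.
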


\begin{proof}
First consider the claimed necessary and sufficient condition.

$(\implies)$ Since $\overline{\langle \calA \rangle}$ is a field, it is a $\sigma$-field if it is closed under countable disjoint unions \cite[Thm. A, p. 27]{halmos2013measure}. Consider pairwise disjoint $\{ A_k \}_{k=1}^{\infty} \subseteq \overline{\langle \calA \rangle}$, and fix $\epsilon > 0$. For each $k$, there exist $B_k, C_k \in \calA$ such that $\langle B_k \rangle \subseteq A_k \subseteq \langle C_k \rangle$ and $\mu(C_k) - \mu(B_k) < 2^{-k} \epsilon/3$. Note that the increasing sequence $\{ \cup_{j=1}^k B_j \}_{k=1}^{\infty}$ is bounded above by $\cup_{j=1}^{\infty} B_j$, hence one may choose $K$ so that $\mu(\cup_{j=1}^K B_j) > \lim_{k \rightarrow \infty} \mu(\cup_{j=1}^k B_j) - \epsilon/3$, and set $B := \cup_{j=1}^K B_j \in \calA$. By Theorem~\ref{Lp_complete}, there exists $C \in \calA$ such that $\mu(\cup_{j=1}^k C_j \setminus C) = 0$ for each $k$ and $\lim_{k \rightarrow \infty} \mu(\cup_{j=1}^k C_j) \leq \mu(C) \leq \lim_{k \rightarrow \infty} \mu(\cup_{j=1}^k C_k) + \epsilon/3$. It follows that $C_k \setminus C \in \calM$ and thus $C_k = (C_k \cap C) \cup (C_k \setminus C) \in [ C_k \cap C ]$, giving $\langle C_k \rangle = \langle C_k \cap C \rangle \subseteq \langle C \rangle$ for all $k$. Putting all this together gives $\langle B \rangle \subseteq \cup_{k=1}^{\infty} A_k \subseteq \langle C \rangle$ and 
\begin{eqnarray*}
\mu(C) - \mu (B) &=& \mu(C) -  \lim_{k \rightarrow \infty} \mu(\cup_{j=1}^k C_j) \\
& & + \lim_{k \rightarrow \infty} \left( \mu(\cup_{j=1}^k C_j) - \mu(\cup_{j=1}^k B_j) \right)\\
& & + \lim_{k \rightarrow \infty} \mu(\cup_{j=1}^k B_j) - \mu(B) \\
& < & \epsilon.
\end{eqnarray*}
Hence $\cup_{k=1}^{\infty} A_k \in \overline{\langle \calA \rangle}$. Moreover, $\lim_{k \rightarrow \infty} \mu(\cup_{j=1}^k C_k) = \sum_{k=1}^{\infty} \mu(C_k)$, giving
\begin{eqnarray*}
\overline{\mu}(\cup_{k=1}^{\infty} A_k) &\leq& \mu\langle C \rangle \\
&\leq& \sum_{k=1}^{\infty} \mu\langle C_k \rangle + \epsilon/3 \\
&\leq& \sum_{k=1}^{\infty} \bigg( \overline{\mu}(A_k) + 2^{-k}\epsilon/3 \bigg) + \epsilon/3 \\
&=& \sum_{k=1}^{\infty} \overline{\mu}(A_k) + 2\epsilon/3.
\end{eqnarray*}
Letting $\epsilon \rightarrow 0$ gives that $\overline{\mu}$ is countably additive on $\overline{\langle \calA \rangle}$.

$(\impliedby)$ Consider an increasing sequence $A_1 \subseteq A_2 \subseteq \ldots$ in $\calA$. Since  $\overline{\langle \calA \rangle}$ is a $\sigma$-field, $\cup_{k=1}^{\infty} \langle A_k \rangle \in \overline{\langle \calA \rangle}$, implying that for any $\epsilon > 0$ there exist $B, C \in \calA$ with $\langle B \rangle \subseteq \cup_{k=1}^{\infty} \langle A_k \rangle \subseteq \langle C \rangle$ and $\mu(C) - \mu(B) < \epsilon$. For each $k$, $\mu(A_k \setminus C) = \mu(\langle A_k \rangle \setminus \langle C \rangle) = 0$ since $\langle A_k \rangle \subseteq \langle C \rangle$. Since $\overline{\mu}$ is countably additive on $\overline{\langle \calA \rangle}$, $\overline{\mu}(\cup_{k=1}^{\infty} \langle A_k \rangle) = \lim_{k \rightarrow \infty} \mu(A_k)$. Combining this with 
\[
\overline{\mu}(\cup_{k=1}^{\infty} \langle A_k \rangle) \leq \mu\langle C \rangle < \overline{\mu}(\cup_{k=1}^{\infty} \langle A_k \rangle) + \epsilon
\]
gives the conditions of Theorem~\ref{Lp_complete}, hence $L_p(X,\calA,\mu)$ is complete.

For the second condition, note that if $\calA / \calM$ is countably complete and $\mu$ is countably additive on $\calA / \calM$, then $(Y,\overline{\langle \calA \rangle},\overline{\mu})$ is a measure space by Lemma~\ref{sigma_embedding}, implying $L_1(X,\calA,\mu)$ is complete.
\qed \end{proof}

Thus one strategy for constructing a complete $L_p(X,\calA,\mu)$ space is to ensure $\calA / \calM$ is countably complete and the induced function $\mu$ is countably additive on $\calA / \calM$. 

\section{$L_p$ spaces over complete charge spaces} 
\label{complete_fields}

Complete measure spaces play an important role in measure theory. This section generalises the concept for charge spaces, and characterises those charge spaces $(X,\calA,\mu)$ for which completion leaves the corresponding $\calL_p(X,\calA,\mu)$ spaces unchanged.

\begin{definition} \label{charge_completeness}
A charge space $(X,\calA,\mu)$ is {\em complete} if it contains its null sets, that is, if $\calN := \{ A \in \calP(X) : \mu^*(A) = 0 \} \subset \calA$.
\end{definition}

If $(X,\calA,\mu)$ is a measure space, then the above definition is equivalent to the standard definition of a complete measure space, since in that case if $A \in \calP(X)$ with $\mu^*(A) = 0$, then $A \subseteq B$ for some $B \in \calA$ with $\mu(B) = 0$. Moreover, for measure spaces, completeness is equivalent to Peano-Jordan completeness by Lemma~\ref{equivalent_completeness}. However, in general, completeness in the above sense is a weaker condition than Peano-Jordan completeness: it is straightforward to show $(X,\calA,\mu)$ is complete if it is Peano-Jordan complete, but the converse does not hold in general. A counter-example to the converse is discussed below, after the proof of Lemma~\ref{completefield}.

The following notation and lemma will be useful in this section.

\begin{definition} \label{generated_field}
For any algebra $\calA$ and any $\calC \subseteq \calA$, let $\alpha(\calC)$ denote the subalgebra of $\calA$ generated by $\calC$, that is, the smallest subalgebra of $\calA$ containing $\calC$.
\end{definition}

\begin{lemma} \label{fieldplusnull}
Suppose $\calA$ and $\calB$ are Boolean algebras with $\calB \subseteq \calA$, and $\calM$ is a Boolean ideal of $\calA$. Then 
\begin{eqnarray*}
\alpha(\calB \cup \calM) &=& \{ (B \wedge M^{\prime}) \vee N : B \in \calB, M,N \in \calM \} \\
&=& \{ A \in \calA : A + B \in \calM \mbox{ for some } B \in \calB \}.
\end{eqnarray*}
\end{lemma}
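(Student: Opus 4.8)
The plan is to prove both displayed equalities by treating the middle set $T := \{ (B \wedge M') \vee N : B \in \calB, M, N \in \calM \}$ and the right-hand set $U := \{ A \in \calA : A + B \in \calM \mbox{ for some } B \in \calB \}$ separately: first I would show $T = U$ by a direct computation, and then $U = \alpha(\calB \cup \calM)$. Identifying $U$ (rather than $T$) with $\alpha(\calB \cup \calM)$ is the cleaner route, because $U$ is phrased through the symmetric difference $+$, which respects the Boolean-ring structure and makes the subalgebra axioms straightforward to verify.

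For $T = U$ I would argue by double inclusion. To see $T \subseteq U$, given $A = (B \wedge M') \vee N$ I would expand $A + B = (A \wedge B') \vee (A' \wedge B)$; using $B \wedge B' = 0$ gives $A \wedge B' = N \wedge B' \leq N$, and De Morgan gives $A' \wedge B = M \wedge B \wedge N' \leq M$, so $A + B \leq M \vee N \in \calM$ and downward closure of the ideal forces $A + B \in \calM$. For $U \subseteq T$, from $A + B \in \calM$ I would set $M := A + B$ and use that $+$ is self-inverse to write $A = B + M = (B \wedge M') \vee (B' \wedge M)$; since $B' \wedge M \leq M \in \calM$, this exhibits $A$ in the form defining $T$.

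Next I would establish $U = \alpha(\calB \cup \calM)$. The inclusion $U \subseteq \alpha(\calB \cup \calM)$ is immediate: any $A \in U$ equals $B + M$ with $B \in \calB$ and $M \in \calM$, both members of the subalgebra $\alpha(\calB \cup \calM)$, which is closed under the Boolean combination $+$. For the reverse inclusion I would show that $U$ is itself a subalgebra of $\calA$ containing $\calB \cup \calM$, so that minimality of $\alpha(\calB \cup \calM)$ yields $\alpha(\calB \cup \calM) \subseteq U$. Here $\calB \subseteq U$ (take $B + B = 0 \in \calM$) and $\calM \subseteq U$ (take $M + 0 = M$, with $0 \in \calB$) are trivial, and closure under complement follows from the Boolean-ring identity $A' = A + 1$, which gives $A' + B' = A + B \in \calM$ with $B' \in \calB$.

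The main obstacle is closure of $U$ under join. For this I would use the inequality $(A_1 \vee A_2) + (B_1 \vee B_2) \leq (A_1 + B_1) \vee (A_2 + B_2)$, justified either by a short Boolean manipulation or, via Stone's representation theorem, by the elementary set inclusion $(A_1 \cup A_2) \triangle (B_1 \cup B_2) \subseteq (A_1 \triangle B_1) \cup (A_2 \triangle B_2)$. Given $A_i + B_i \in \calM$, the right-hand side lies in $\calM$, hence so does the left-hand side by downward closure, while $B_1 \vee B_2 \in \calB$; thus $A_1 \vee A_2 \in U$. Since $U$ contains $0$ and is closed under complement and join, it is a subalgebra, and chaining the inclusions gives $T = U = \alpha(\calB \cup \calM)$, which is the assertion.
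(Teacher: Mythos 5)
Your proof is correct, and it reorganises the argument in a way that genuinely differs from the paper's. Both proofs share the same skeleton (double inclusion between the two descriptions, plus exhibiting a subalgebra containing $\calB \cup \calM$ and invoking minimality of $\alpha$), but they differ in \emph{which} set carries the subalgebra verification. The paper works with the normal form $\{ (B \wedge M^{\prime}) \vee N \}$ and checks the closure axioms there; the cost is an explicit and somewhat delicate computation for closure under join, producing the witness $M_3 := (M_1 \wedge M_2) \vee (M_1 \wedge B_2^{\prime}) \vee (M_2 \wedge B_1^{\prime})$. You instead verify the axioms on $U = \{A \in \calA : A + B \in \calM \mbox{ for some } B \in \calB\}$, exploiting the Boolean-ring structure: complement closure is immediate from $A^{\prime} + B^{\prime} = A + B$, and join closure follows from the inequality $(A_1 \vee A_2) + (B_1 \vee B_2) \leq (A_1 + B_1) \vee (A_2 + B_2)$ together with downward closure of the ideal. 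Your route is cleaner and less error-prone, at the price of needing that inequality (a short direct computation; invoking Stone's theorem for it is legitimate but heavier than necessary). The paper's route is more computational but entirely self-contained and yields the explicit normal-form witnesses directly. Your $T = U$ step is essentially the paper's second paragraph with equivalent witnesses: your $N := B^{\prime} \wedge (A+B)$ coincides with the paper's $N := A \wedge B^{\prime}$, and your $M := A + B$ dominates the paper's $M := B \wedge A^{\prime}$, either of which works since the form only requires $A = (B \wedge M^{\prime}) \vee N$ with $M, N \in \calM$.
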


\begin{proof}
The collection $\calC := \{ (B \wedge M^{\prime}) \vee N : B \in \calA^{\prime}, M,N \in \calM \}$ is a subalgebra of $\calA$. To see this, note:
\begin{enumerate}
\item $X = (X \wedge 0^{\prime}) \vee 0 \in \calC$,
\item $((B_1 \wedge M^{\prime}) \vee N)^{\prime} = (B_1^{\prime} \wedge N^{\prime}) \vee (M \wedge N^{\prime}) \in \calC$, and
\item $((B_1 \wedge M_1^{\prime}) \vee N_1) \vee ((B_2 \wedge M_2^{\prime}) \vee N_2) = (B_3 \wedge M_3^{\prime}) \vee N_3 \in \calC$ where $B_3 := B_1 \vee B_2$, 
$M_3 := (M_1 \wedge M_2) \vee (M_1 \wedge B_2^{\prime}) \vee (M_2 \wedge B_1^{\prime})$
and $N_3 := N_1 \vee N_2$,
\end{enumerate}
for any $B_1,B_2 \in \calB$ and $M_1, N_1, M_2, N_2 \in \calM$. Thus $\calC$ is a Boolean algebra containing $\calB \cup \calM$, hence $\alpha(\calB \cup \calM) \subseteq \calC$. But also $\calC \subseteq \alpha(\calB \cup \calM)$, hence $\calC = \alpha(\calB \cup \calM)$. 

Consider $A \in \alpha(\calB \cup \calM)$. Then there is $B \in \calB$ and $M,N \in \calM$ such that $A := (B \wedge M^{\prime}) \vee N$. But then $A + B \leq M \vee N \in \calM$, implying $A + B \in \calM$. On the other hand, if $A \in \calA$ such that $A + B \in \calM$ for some $B \in \calB$, then $A = (B \wedge M^{\prime}) \vee N$ where $M := B \wedge A^{\prime} \in \calM$ and $N := A \wedge B^{\prime} \in \calM$.
\qed \end{proof}

Any charge space can be completed by adding null sets, in the following manner.

\begin{lemma} \label{completefield}
Let $(X,\calA,\mu)$ be a charge space with null sets $\calN \subset \calP(X)$. Then $\mu$ has a unique extension to a charge on $\alpha(\calA \cup \calN)$. Moreover, $(X,\alpha(\calA \cup \calN),\mu)$ is a complete charge space with null sets $\calN$.
\end{lemma}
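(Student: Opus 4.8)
The plan is to invoke Lemma~\ref{fieldplusnull} to obtain a workable description of the generated algebra, define the extension on representatives, verify it is a charge, and then identify the null sets of the completed space. First I would record that $\calN$ is a Boolean ideal of $\calP(X)$: it contains $\emptyset$, it is downward closed because $\mu^*$ is monotone, and it is closed under finite unions because $\mu^*$ is subadditive. Applying Lemma~\ref{fieldplusnull} with $\calB = \calA$ and $\calM = \calN$ inside the ambient algebra $\calP(X)$ then yields
\[
\alpha(\calA \cup \calN) = \{ E \in \calP(X) : E \triangle A \in \calN \mbox{ for some } A \in \calA \}.
\]

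With this description in hand, I would define the extension by $\mu(E) := \mu(A)$ whenever $A \in \calA$ satisfies $E \triangle A \in \calN$. Well-definedness is the first thing to check: if $E \triangle A \in \calN$ and $E \triangle A' \in \calN$, then $A \triangle A' \subseteq (E \triangle A) \cup (E \triangle A') \in \calN$, so $A \setminus A'$ and $A' \setminus A$ are null sets lying in $\calA$, whence $\mu(A) = \mu(A \cap A') = \mu(A')$. This map clearly extends $\mu$ (take $E = A$), is non-negative, and sends $\emptyset$ to $0$. For finite additivity, given disjoint $E_1, E_2$ with representatives $A_1, A_2$, the key observation is that since $E_1 \cap E_2 = \emptyset$ one has $A_1 \cap A_2 \subseteq (E_1 \triangle A_1) \cup (E_2 \triangle A_2) \in \calN$, so $\mu(A_1 \cap A_2) = 0$; as $A_1 \cup A_2$ represents $E_1 \cup E_2$, inclusion--exclusion for charges gives $\mu(E_1 \cup E_2) = \mu(A_1 \cup A_2) = \mu(A_1) + \mu(A_2)$.

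For uniqueness, I would note that any charge $\nu$ on $\alpha(\calA \cup \calN)$ extending $\mu$ must vanish on $\calN$: each $N \in \calN$ sits inside some $B \in \calA$ with $\mu(B)$ arbitrarily small, and non-negative charges are monotone, so $\nu(N) \leq \nu(B) = \mu(B)$. Then the disjoint decompositions $A = (A \cap E) \cup (A \setminus E)$ and $E = (A \cap E) \cup (E \setminus A)$, in which $A \setminus E$ and $E \setminus A$ are null, force $\nu(E) = \nu(A \cap E) = \nu(A) = \mu(A)$, so $\nu$ coincides with the extension just built. Finally, to pin down the null sets of the completed space: every $N \in \calN$ has $\mu(N) = 0$, so $\calN$ is contained in the new null sets; conversely, if $S$ is null for the completed space then for each $\epsilon > 0$ there is $E \supseteq S$ in $\alpha(\calA \cup \calN)$ with representative $A$ and $\mu(A) < \epsilon$, and $S \subseteq A \cup (E \triangle A)$ together with subadditivity of $\mu^*_{\calA}$ gives $\mu^*_{\calA}(S) \leq \mu(A) < \epsilon$, so $S \in \calN$. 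Hence the null sets of $(X, \alpha(\calA \cup \calN), \mu)$ are exactly $\calN$, which is contained in $\alpha(\calA \cup \calN)$, and completeness follows from Definition~\ref{charge_completeness}.

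The hard part will be the finite-additivity step, where the chosen representatives $A_1, A_2$ need not be disjoint even though $E_1, E_2$ are; controlling their overlap via the inclusion $A_1 \cap A_2 \subseteq (E_1 \triangle A_1) \cup (E_2 \triangle A_2)$ is the crux. Beyond that, the argument is a sequence of symmetric-difference manipulations that all reduce to the two properties that $\calN$ is downward closed and subadditive, so once the representation from Lemma~\ref{fieldplusnull} is established the remaining verifications are routine, requiring only care to keep the set bookkeeping correct.
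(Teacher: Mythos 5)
Your proposal is correct and follows essentially the same route as the paper: apply Lemma~\ref{fieldplusnull} to describe $\alpha(\calA \cup \calN)$, define $\mu$ on representatives, check well-definedness and uniqueness, and show the null sets of the completed space are exactly $\calN$. The only difference is that you spell out the finite-additivity and uniqueness verifications (via the overlap bound $A_1 \cap A_2 \subseteq (E_1 \triangle A_1) \cup (E_2 \triangle A_2)$ and the fact that any extending charge vanishes on $\calN$) that the paper dismisses as straightforward, which is a welcome addition rather than a deviation.
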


\begin{proof}
Since $\calN$ is a Boolean ideal of $\calP(X)$, Lemma~\ref{fieldplusnull} gives 
\[
\alpha(\calA \cup \calN) = \{ A \in \calP(X) : A \triangle B \in \calN \mbox{ for some } B \in \calA \}.
\]
Thus for any $A \in \alpha(\calA \cup \calN)$, there is $B \in \calA$ such that $A \triangle B \in \calN$. One may define $\mu(A) := \mu(B)$. This extension of $\mu$ is well defined, since if there is another element $C \in \calA$ such that $A \triangle C \in \calN$, then $B \triangle C \in \calN$, and $\mu(B) = \mu(C)$. It is then straightforward to show $(X,\alpha(\calA \cup \calN),\mu)$ is a charge space. 

Suppose there is another charge $\mu^{\prime}$ on $\alpha(\calA \cup \calN)$ that agrees with $\mu$ on $\calA$. Then for $A \in \alpha(\calA \cup \calN)$, there is $B \in \calA$ with $A \triangle B \in \calN$. Hence $\mu^{\prime}(A) = \mu(B) = \mu(A)$, implying the extension is unique.

To see that $(X,\alpha(\calA \cup \calN),\mu)$ is complete, consider any null set $N$ of this charge space and fix $\epsilon > 0$. Then there exists $A \in \alpha(\calA \cup \calN)$ with $N \subseteq A$ and $\mu(A) < \epsilon/2$. Moreover, there exists $B \in \calA$ with $A \triangle B \in \calN$. But then there exists $C \in \calA$ with $A \triangle B \subseteq C$ and $\mu(C) < \epsilon/2$. It follows that $N \subseteq B \cup C \in \calA$ with $\mu(B \cup C) \leq \mu(B) + \mu(C) < \epsilon$. Letting $\epsilon \rightarrow 0$ gives $N \in \calN \subset \alpha(\calA \cup \calN)$, implying $\alpha(\calA \cup \calN)$ is complete, with null sets $\calN$.
\qed \end{proof}

The following example demonstrates that a charge space can be complete and yet not Peano-Jordan complete. Consider the charge space $(\N,\calA,\nu)$, where $\N$ is the natural numbers excluding 0 and $\calA$ is the collection of periodic sets in $\N$. Here a set $A \in \calP(\N)$ is said to be {\em periodic} if the binary sequence $(I_A(1),I_A(2),\ldots)$ is periodic, where $I_A$ is the indicator function for set $A$. The charge $\nu$ is defined by
\[
\nu(A) := \lim_{N \rightarrow \infty} \frac{1}{N} \sum_{n=1}^N I_A(n)
\]
for all $A \in \calA$. One can show (proof omitted) that $\nu(\calA)$ is the set of rational numbers in $[0,1]$, whereas $\nu(\overline{\calA}) = [0,1]$, so $\calA$ is not Peano-Jordan complete. Nor is $\calA$ complete, since the set $\calN$ of null sets includes the finite sequences, which are not periodic. However, the charge space $(\N,\alpha(\calA \cup \calN),\nu)$ described in Lemma~\ref{completefield} contains its null sets and is therefore complete, whereas Lemma~\ref{fieldplusnull} implies $\nu(\alpha(\calA \cup \calN))$ is still the set of rational numbers in $[0,1]$, implying $(\N,\alpha(\calA \cup \calN),\nu)$ is not Peano-Jordan complete.

The rest of this section characterises those charge spaces $(X,\calA,\mu)$ for which completion does not add any new equivalence classes to $\calL_p(X,\calA,\mu)$ spaces, though it may add functions to those equivalence classes. To begin, Lemma~\ref{nullmodification_X} describes a construction that is here called a {\em null modification}, mapping chains (totally ordered sets) from a charge space into a subspace, by adding and removing null sets. The proof of Lemma~\ref{nullmodification_X} relies on the following real analysis lemma.

\begin{lemma} \label{countable_subset} 
Totally ordered sets have the following properties.
\begin{enumerate}
\item Any $\calT \subseteq \R$ contains a countable subset $\calC$ such that for $a \in \calT$ and $\epsilon > 0$, there exist $b, c \in \calC$ with $b \leq a \leq c$ and $c - b < \epsilon$. 
\item Consider a totally ordered set $\calT$ and a strictly increasing function $\mu : \calT \rightarrow \R$. Then $\calT$ contains a countable subset $\calC$ such that for $A \in \calT$ and $\epsilon > 0$, there exist $B, C \in \calC$ with $B \leq A \leq C$ and $\mu(C) - \mu(B) < \epsilon$.
\end{enumerate}
\end{lemma}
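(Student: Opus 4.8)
The plan is to prove Part~2 by reducing it to Part~1 via the map $\mu$, so the substantive work lies in Part~1, which is a statement about $\calT \subseteq \R$. For Part~1 the idea is to assemble $\calC$ from two countable pieces. First I would take a countable dense subset $D$ of $\calT$ in the subspace topology; this exists because $\R$ is second countable, hence so is every subspace, so $\calT$ is separable. The set $D$ handles every two-sided accumulation point: if $a \in \calT$ is approached by points of $\calT$ from both sides, then the relatively open sets $(a - \epsilon/2,\, a) \cap \calT$ and $(a,\, a + \epsilon/2) \cap \calT$ are nonempty, so $D$ meets each, yielding $b, c \in D$ with $b < a < c$ and $c - b < \epsilon$.

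The difficulty is with points that are isolated on one side, for instance the left endpoint of a gap, which a dense subset need not contain. To handle these I would adjoin a second set $\calC_0$ consisting of every $a \in \calT$ that is \emph{left-isolated} (some $(a - \delta,\, a) \cap \calT = \emptyset$) or \emph{right-isolated} (some $(a,\, a + \delta) \cap \calT = \emptyset$). For such a point, taking $b = a$ or $c = a$ supplies the missing one-sided approximant. Setting $\calC := D \cup \calC_0$, a short case analysis (two-sided accumulation versus one-sided isolation, decided independently on each side, with slack $\epsilon/2$ on each side) then produces $b \le a \le c$ in $\calC$ with $c - b < \epsilon$.

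The main obstacle is showing $\calC_0$ is countable. I would argue that the left-isolated points form a countable set by assigning to each such $a$ a rational $q_a \in (a - \delta_a,\, a)$ and checking that this assignment is injective: if $q_{a_1} = q_{a_2}$ with $a_1 < a_2$, then $a_1$ would lie in the gap $(a_2 - \delta_{a_2},\, a_2)$, contradicting $a_1 \in \calT$. The right-isolated points are handled symmetrically, and $\calC_0$ is therefore countable, as is $\calC = D \cup \calC_0$.

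Finally, for Part~2 I would exploit that a strictly increasing $\mu : \calT \rightarrow \R$ is an order isomorphism onto its image $\calT^{\prime} := \mu(\calT) \subseteq \R$: it both preserves and, since the domain is totally ordered, reflects the order, and it is injective. Applying Part~1 to $\calT^{\prime}$ gives a countable $\calC^{\prime} \subseteq \calT^{\prime}$, and I would set $\calC := \mu^{-1}(\calC^{\prime})$, which is countable by injectivity. For $A \in \calT$ and $\epsilon > 0$, sandwiching $\mu(A)$ by $b^{\prime}, c^{\prime} \in \calC^{\prime}$ with $c^{\prime} - b^{\prime} < \epsilon$ pulls back through $\mu^{-1}$ to $B \le A \le C$ in $\calC$, and since $\mu(C) - \mu(B) = c^{\prime} - b^{\prime}$, the required estimate $\mu(C) - \mu(B) < \epsilon$ follows immediately.
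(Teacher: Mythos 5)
Your proposal is correct and follows essentially the same route as the paper: a countable dense subset of $\calT$ handles two-sided accumulation points, a countable set of one-sidedly isolated points (countable because distinct rationals can be injected into the disjoint gaps) handles the rest, and Part~2 is obtained by applying Part~1 to the image $\mu(\calT)$ using that a strictly increasing map is injective and order-preserving. Your variant of defining the exceptional points as isolated from $\calT$ (rather than from the dense subset, as the paper does) makes the rational-injection countability argument slightly cleaner, but the proofs are otherwise the same.
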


\begin{proof}
For 1, let $\calC_0$ be a countable, dense subset of $\calT$. (Such a subset exists since any subset of the reals is separable. Standard proofs of this invoke the axiom of countable choice.) Define 
\begin{align*}
\calC_1 &:= \{ c \in \calC : \exists \epsilon > 0 \mbox{ with } (c - \epsilon/2,c) \cap \calC_0 = \emptyset \}, \mbox{ and } \\ 
\calC_2 &:= \{ c \in \calC : \exists \epsilon > 0 \mbox{ with } (c, c + \epsilon/2) \cap \calC_0 = \emptyset \}.
\end{align*}
Then $\calC_1$ and $\calC_2$ are both countable, since there cannot be an uncountable number of pairwise disjoint intervals of non-zero width contained in $\R$ (each must contain a distinct rational). Thus $\calC := \calC_0 \cup \calC_1 \cup \calC_2$ is a countable set with the claimed property.

Claim~2 then follows by applying Claim~1 to $\mu(\calT)$, and noting $\mu$ is one-to-one and order preserving, as a consequence of being strictly increasing.
\qed \end{proof}

It will be convenient to introduce the following notation. Define
\begin{eqnarray*}
\calC(A,B) & := & \{ C \in \calC : A \subset C \subset B \} 
\end{eqnarray*}
to represent {\em sub-intervals} of a chain $\calC \subset \calA$. Here $A, B \in \calA$ but are not necessarily elements of $\calC$. 

\begin{lemma} \label{nullmodification_X}
Suppose $(X,\calA,\mu)$ is a Peano-Jordan complete charge space with null sets $\calN$, and $\calA^{\prime}$ is a sub-field of $\calA$ such that $\overline{\alpha(\calA^{\prime} \cup \calN)} = \alpha(\overline{\calA^{\prime}} \cup \calN)$. Given a chain $\calT \subseteq \overline{\alpha(\calA^{\prime} \cup \calN)}$, there exists a map $\phi : \calT \rightarrow \overline{\calA^{\prime}}$ such that:
\begin{enumerate}
\item for all $A \in \calT$, $\phi(A) \triangle A \in \calN$,
\item for all $A, B \in \calT$, 
\begin{align*}
[A]_{\calA / \calN} = [B]_{\calA / \calN} &\iff \phi(A) = \phi(B) \mbox{ and } \\
[A]_{\calA / \calN} < [B]_{\calA / \calN} &\iff \phi(A) \subset \phi(B).
\end{align*}
\end{enumerate}
\end{lemma}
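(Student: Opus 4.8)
The plan is to push the problem down to the Boolean quotient $\calA / \calN$ and to build $\phi$ so that it factors through the class map $A \mapsto [A]_{\calA/\calN}$. First I would record two structural facts. Since $(X,\calA,\mu)$ is Peano-Jordan complete, every null set already lies in $\calA$, so $\calN$ is a Boolean ideal of $\calA$ and $[A]_{\calA/\calN}$ makes sense for every $A \in \calT$. Because $\calT \subseteq \overline{\alpha(\calA^{\prime} \cup \calN)} = \alpha(\overline{\calA^{\prime}} \cup \calN)$, Lemma~\ref{fieldplusnull} (with the subalgebra taken to be $\overline{\calA^{\prime}}$) shows that each $A \in \calT$ satisfies $A \triangle B \in \calN$ for some $B \in \overline{\calA^{\prime}}$; that is, every class in $[\calT] := \{ [A]_{\calA/\calN} : A \in \calT \}$ has a representative in $\overline{\calA^{\prime}}$. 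Finally, since $\calT$ is a chain, $[\calT]$ is a totally ordered subset of $\calA / \calN$, and $\overline{\mu}$ is \emph{strictly} increasing along it: if $[A] < [B]$ then $B \setminus A \notin \calN$, so $\overline{\mu}(B) = \overline{\mu}(A) + \overline{\mu}(B \setminus A) > \overline{\mu}(A)$.

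Next I would extract a countable skeleton. Applying Lemma~\ref{countable_subset}(2) to the totally ordered set $[\calT]$ and the strictly increasing function $\overline{\mu}$ yields a countable subset $\calC = \{ [A_i] \}_{i \geq 1} \subseteq [\calT]$ such that every class in $[\calT]$ can be squeezed between members of $\calC$ whose $\overline{\mu}$-values are arbitrarily close. I would then construct a family of nested representatives $\{ D_i \}_{i \geq 1} \subseteq \overline{\calA^{\prime}}$ with $[D_i] = [A_i]$ by induction on $i$: having built $D_1, \dots, D_{n-1}$ respecting the order, choose any $E_n \in \overline{\calA^{\prime}}$ with $[E_n] = [A_n]$, set $P_n := \bigcup \{ D_k : k < n,\ [A_k] \leq [A_n] \}$ and $Q_n := \bigcap \{ D_k : k < n,\ [A_k] \geq [A_n] \}$ (taking $\emptyset$ and $X$ respectively when these index sets are empty), and put $D_n := (E_n \cup P_n) \cap Q_n$. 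Since $\overline{\calA^{\prime}}$ is a field, $D_n \in \overline{\calA^{\prime}}$; the inductive nesting gives $P_n \subseteq Q_n$, hence $P_n \subseteq D_n \subseteq Q_n$, so $D_k \subseteq D_n$ whenever $[A_k] \leq [A_n]$ and $D_n \subseteq D_k$ whenever $[A_k] \geq [A_n]$; and $[D_n] = ([A_n] \vee [P_n]) \wedge [Q_n] = [A_n]$.

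Finally I would define $\phi$ canonically from the skeleton by $\phi(A) := \bigcup \{ D_i : [A_i] \leq [A] \}$, which manifestly depends only on $[A]$. The crucial step --- and the one I expect to be the main obstacle --- is showing $\phi(A) \in \overline{\calA^{\prime}}$ rather than merely in $\alpha(\overline{\calA^{\prime}} \cup \calN)$. For this I would fix $\epsilon > 0$ and use the density property of $\calC$ to pick indices with $[A_b] \leq [A] \leq [A_c]$ and $\overline{\mu}[A_c] - \overline{\mu}[A_b] < \epsilon$; the nesting of the $D_i$ then sandwiches $D_b \subseteq \phi(A) \subseteq D_c$ with $D_b, D_c \in \overline{\calA^{\prime}}$ and $\mu(D_c \setminus D_b) < \epsilon$, so $\phi(A) \in \overline{\overline{\calA^{\prime}}} = \overline{\calA^{\prime}}$ by idempotency of Peano-Jordan completion. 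The same sandwich, letting $\epsilon \to 0$, forces $A \setminus \phi(A) \in \calN$ and $\phi(A) \setminus A \in \calN$ (since $\overline{\mu}(A \setminus D_b) = \overline{\mu}[A] - \overline{\mu}[A_b]$ and $\overline{\mu}(D_c \setminus A) = \overline{\mu}[A_c] - \overline{\mu}[A]$ both vanish in the limit), giving Property~1 and $[\phi(A)] = [A]$. Property~2 is then formal: $[A] \leq [B]$ gives $\phi(A) \subseteq \phi(B)$ directly from the definition, while $[\phi(A)] = [A]$ lets me read the class off $\phi$, so $[A] = [B] \iff \phi(A) = \phi(B)$ and $[A] < [B] \iff \phi(A) \subset \phi(B)$ with strict inclusion. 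The routine verifications I would defer are the Boolean identities used for $D_n$ and the elementary measure computations in the sandwich.
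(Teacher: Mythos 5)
Your proposal is correct and takes essentially the same route as the paper's proof: extract a countable $\overline{\mu}$-dense skeleton of $[\calT]$ via Lemma~\ref{countable_subset}(2), use Lemma~\ref{fieldplusnull} (through the hypothesis $\overline{\alpha(\calA^{\prime} \cup \calN)} = \alpha(\overline{\calA^{\prime}} \cup \calN)$) to pick class representatives in $\overline{\calA^{\prime}}$, force nesting inductively with the sandwich $D_n := (E_n \cup P_n) \cap Q_n$ --- which is exactly the paper's formula $F_k := \big( E_k \cup \phi_{k-1}(B_k) \big) \cap \phi_{k-1}(D_k)$ --- then extend to all of $\calT$ by unions and conclude membership in $\overline{\calA^{\prime}}$, Property~1, and Property~2 by the same density/idempotency and formal arguments. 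The only real difference is bookkeeping: the paper additionally modifies the not-yet-processed chain elements at every inductive step to keep its maps $\phi_k$ order preserving on all of $\calC$, whereas your construction shows this is unnecessary, since order preservation is only ever needed among the already-constructed representatives.
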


\begin{proof}
%
First note that since $(X,\calA,\mu)$ is Peano-Jordan complete, it contains its null sets and hence $\calT \subseteq \overline{\alpha(\calA^{\prime} \cup \calN)} \subseteq \calA$.

Let $[\calC]$ be the countable subchain of $[\calT] := \{ [A] : A \in \calT \}$ obtained by applying Lemma~\ref{countable_subset}(2) to the totally ordered set $[\calT]$ and the strictly increasing function $\nu: [\calT] \rightarrow \R$ given by $\nu[A] := \nu(A)$. Then let $\calC \subseteq \calT$ be obtained by selecting exactly one element of $\calT$ from each of the equivalence classes in $[\calC]$. (This implicitly invokes the axiom of countable choice, in general.)

Arbitrarily order the elements of $\calC$ as a sequence $\{ C_k \}_{k=1}^{\infty}$. (If $\calC$ is finite, the proof below still holds with minimal modifications.) For each $k \in \N$, define $B_k$ to be the largest set in the sequence $C_1, \ldots, C_{k-1}$ that is a proper subset of $C_k$, if such a set exists (that is, $B_k := \bigcup \{ C_j : j < k, C_j \subset C_k  \}$). Otherwise, set $B_k := \emptyset$. Similarly, define $D_k$ to be the smallest set in the sequence $C_1, \ldots, C_{k-1}$ that properly contains $C_k$, if such a set exists (that is, $D_k := \bigcap \{ C_j : j < k, C_j \supset C_k  \}$). Otherwise, set $D_k := X$.

Set $\phi_0(C) = C$ for all $C \in \calC$, and suppose inductively that $\phi_{k-1}(\calC) \subseteq \overline{\alpha(\calA^{\prime} \cup \calN)}$. This is trivially true for $k = 1$, since $\calT \subseteq \overline{\alpha(\calA^{\prime} \cup \calN)}$. Sequentially define $\phi_k$ for each $k \in \N$ as follows:
\begin{enumerate}
\item Select $E_k \in \overline{\calA^{\prime}}$ such that $E_k \triangle \phi_{k-1}(C_k) \in \calN$.
\item Form the set $F_k := \big( E_k \cup \phi_{k-1}(B_k) \big) \cap \phi_{k-1}(D_k)$.
\item For each $C \in \calC$, define
\[
\phi_k(C) := \left\{
\begin{array}{ll}
F_k & \mbox{ if } C = C_k \\
\phi_{k-1}(C) \cap F_k & \mbox{ if } C \in \calC(B_k, C_k) \\
\phi_{k-1}(C) \cup F_k & \mbox{ if } C \in \calC(C_k, D_k) \\
\phi_{k-1}(C) & \mbox{ otherwise.}
\end{array}
\right.
\]
\end{enumerate}
The set $E_k \in \overline{\calA^{\prime}}$ selected in Step~1 exists because, by assumption,  $\overline{\alpha(\calA^{\prime} \cup \calN)} = \alpha(\overline{\calA^{\prime}} \cup \calN)$, hence Lemma~\ref{fieldplusnull} applies. Notice that for each $C \in \calC$, $\phi_k(C)$ is formed from sets of $\overline{\alpha(\calA^{\prime} \cup \calN)}$ using finitely many basic set operations, hence $\phi_k(\calC) \subseteq \overline{\alpha(\calA^{\prime} \cup \calN)}$.

It is straightforward to show (by induction) that $\phi_k$ is an order preserving map for each $k \in \N$, in the sense $B \subseteq C \implies \phi_k(B) \subseteq \phi_k(C)$ for all $B, C \in \calC$, and that $\phi_j(C_k) = \phi_k(C_k)$ for all $k \in \N$ and $j \geq k$. 

Define $\phi^{\prime}(C_k) := \phi_k(C_k)$ for all $k \in \N$. Then $\phi^{\prime}$ is order preserving, since if $C_j \subseteq C_k$,  
\[
\phi^{\prime}(C_j) = \phi_{\max\{j,k\}} (C_j) \subseteq \phi_{\max\{j,k\}} (C_k) = \phi^{\prime}(C_k).
\]
Thus $\phi^{\prime}(\calC)$ is a chain.

Assume $\phi_{k-1}(C_j) \in \overline{\calA^{\prime}}$ for $j \in \{ 1, \ldots, k-1 \}$, noting this is trivially satisfied when $k = 1$ (because $\{ 1, \ldots, k-1 \} = \emptyset$). Then 
$\phi_{k-1}(B_k), \phi_{k-1}(D_k) \in \overline{\calA^{\prime}}$, since for $k=1$, $\phi_0(B_1) = \emptyset \in \overline{\calA^{\prime}}$ and $\phi_0(D_1) = X \in \overline{\calA^{\prime}}$, and for $k > 1$,
$B_k, D_k \in \{ C_1, \ldots, C_{k-1} \}$. It follows that $\phi_k(C_k) = F_k \in \overline{\calA^{\prime}}$, since $E_k \in \overline{\calA^{\prime}}$. Moreover, $\phi_k(C_j) = \phi_{k-1}(C_j) \in \overline{\calA^{\prime}}$ for $j \in \{ 1, \ldots, k-1 \}$. Thus, by induction, $\phi_k(C_j) \in \overline{\calA^{\prime}}$ for all $k \in \N$ and $j \in \{ 1, \ldots, k \}$. Thus $\phi^{\prime}(\calC) \subseteq \overline{\calA^{\prime}}$.

For all $k \in \N$ and $C \in \calC$, $\phi_k(C) \triangle \phi_{k-1}(C) \in \calN$. To see this, note firstly that it is trivially true for $C \not\in \calC(B_k,D_k)$. For $C = C_k$,
\begin{eqnarray*}
\phi_{k-1}(C_k) \setminus \phi_k(C_k) & = & \phi_{k-1}(C_k) \setminus F_k \\
& \subseteq & \phi_{k-1}(C_k) \setminus \big( E_k \cap \phi_{k-1}(D_k) \big) \\
& = & \phi_{k-1}(C_k) \setminus E_k,
\end{eqnarray*}
and 
\begin{eqnarray*}
\phi_k(C_k) \setminus \phi_{k-1}(C_k) & = & F_k \setminus \phi_{k-1}(C_k) \\
& \subseteq & \big( E_k \cup \phi_{k-1}(B_k) \big) \setminus \phi_{k-1}(C_k) \\
& = & E_k \setminus \phi_{k-1}(C_k).
\end{eqnarray*}
For $C \in \calC(B_k, C_k)$,
\begin{eqnarray*}
\phi_k(C) \triangle \phi_{k-1}(C) &=& \phi_{k-1}(C) \setminus F_k \\
& \subseteq & \phi_{k-1}(C_k) \setminus F_k \\
& \subseteq & \phi_{k-1}(C_k) \setminus E_k,
\end{eqnarray*}
and for $C \in \calC(C_k, D_k)$,
\begin{eqnarray*}
\phi_k(C) \triangle \phi_{k-1}(C) &=& F_k \setminus \phi_{k-1}(C) \\
& \subseteq & F_k \setminus \phi_{k-1}(C_k) \\
& \subseteq & E_k \setminus \phi_{k-1}(C_k).
\end{eqnarray*}
Hence in all cases $\phi_k(C) \triangle \phi_{k-1}(C) \subseteq E_k \triangle \phi_{k-1}(C_k) \in \calN$. It follows that $\phi^{\prime}(C)$ differs from $C$ by the addition of at most $k$ null sets and/or the removal of at most $k$ null sets, hence $\phi^{\prime}(C) \triangle C \in \calN$ for all $C \in \calC$. That is, Property~1 holds on $\calC$. This also implies $\mu(\phi^{\prime}(C)) = \mu(C)$ for all $C \in \calC$.

Define $\phi(A) := \bigcup \{ \phi^{\prime}(C) : C \in \calC, [C] \leq [A] \}$ for all $A \in \calT$. Then $\phi(C) = \phi^{\prime}(C)$ for all $C \in \calC$. Moreover, $\phi(A) \in \overline{\calA^{\prime}}$ for all $A \in \calT$. To see this, note either $[A] = [C]$ for some $C \in \calC$, which implies $\phi(A) = \phi^{\prime}(C) \in \overline{\calA^{\prime}}$, or for any $\epsilon > 0$ there exist $B, C \in \calC$ with $B \subset A \subset C$ and $\mu(C) - \mu(B) < \epsilon$, which implies $\phi(B) \subseteq \phi(A) \subseteq \phi(C)$ and $\mu(\phi(C)) - \mu(\phi(B)) < \epsilon$, and thus again $\phi(A) \in \overline{\calA^{\prime}}$. 

Note also that for any $A, B \in \calT$,
\begin{align*}
[A] = [B] &\implies \phi(A) = \phi(B) \mbox{ and } \\
[A] < [B] &\implies \phi(A) \subseteq \phi(B).
\end{align*}
Hence $\phi$ is order preserving.

Property~1 can be extended from $\calC$ to $\calT$ as follows. Consider $A \in \calT$ and $\epsilon > 0$. There exist $B, C \in \calC$ with $[B] \leq [A] \leq [C]$ and $\nu(C) - \nu(B) < \epsilon / 2$. If $[A] = [B]$ then $A \triangle B \in \calN$ and $\phi(A) = \phi^{\prime}(B)$. Moreover, $\phi^{\prime}(B) \triangle B \in \calN$. Hence $\phi(A) \triangle A \in \calN$. Similarly, if $[A] = [C]$ then $\phi(A) \triangle A \in \calN$. Suppose $[B] < [A] < [C]$, implying $B \subset A \subset C$. Then $\phi^{\prime}(B) \subseteq \phi(A) \subseteq \phi^{\prime}(C)$, since $\phi$ is order preserving, and $\nu(\phi^{\prime}(C)) - \nu(\phi^{\prime}(B)) < \epsilon / 2$. It follows that
\[
\phi(A) \triangle A \subseteq (\phi^{\prime}(B) \triangle B) \cup (\phi^{\prime}(C) \setminus \phi^{\prime}(B)) \cup (C \setminus B),
\]
and
\[
\nu^+(\phi(A) \triangle A) \leq \nu(\phi(B) \triangle B) + \nu(\phi^{\prime}(C)) - \nu(\phi^{\prime}(B)) + \nu(C) - \nu(B) < \epsilon.
\]
Letting $\epsilon \rightarrow 0$ gives $\phi(A) \triangle A \in \calN$. 

Property~2 follows from Property~1 and the fact $\phi$ is order preserving. Note
\[
\phi(A) = \phi(B) \implies [\phi(A)] = [\phi(B)] \implies [A] = [B],
\]
since $A \in [\phi(A)]$ and $B \in [\phi(B)]$. This in turn gives 
\[
[A] < [B] \implies \phi(A) \neq \phi(B) \implies \phi(A) \subset \phi(B)
\]
since it is already established that $[A] < [B] \implies \phi(A) \subseteq \phi(B)$. Moreover,
\begin{eqnarray*}
\phi(A) \subset \phi(B) &\implies& \phi(A) \neq \phi(B) \mbox{ and } \phi(B) \not\subset \phi(A) \\
&\implies& [A] \neq [B] \mbox{ and } [B] \not< [A] \\
&\implies& [A] < [B].
\end{eqnarray*}
\qed \end{proof}

The condition $\overline{\alpha(\calA^{\prime} \cup \calN)} = \alpha(\overline{\calA^{\prime}} \cup \calN)$ in the preceding lemma is implied by some stronger conditions, as follows.

\begin{lemma} \label{completion_identities}
Suppose $(X,\calA,\mu)$ is a charge space with null sets $\calN$, and $\calA^{\prime}$ is a sub-field of $\calA$. Then
\begin{enumerate}
\item $\overline{\alpha(\overline{\calA^{\prime}} \cup \calN)} = \overline{\alpha(\calA^{\prime} \cup \calN)}$,
\item $\overline{\alpha(\calA^{\prime} \cup \calN)} = \alpha(\overline{\calA^{\prime}} \cup \calN)$ if and only if $\alpha(\overline{\calA^{\prime}} \cup \calN)$ is Peano-Jordan complete.
\item $\calN \subset \overline{\calA^{\prime}} \iff \overline{\alpha(\calA^{\prime} \cup \calN)} = \overline{\calA^{\prime}} \implies \overline{\alpha(\calA^{\prime} \cup \calN)} = \alpha(\overline{\calA^{\prime}} \cup \calN)$.
\end{enumerate}
\end{lemma}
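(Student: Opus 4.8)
The plan is to reduce the entire lemma to three properties of Peano--Jordan completion recorded in the excerpt: the completion $\overline{\calB}$ of any field $\calB$ is again a field; completion is monotone, i.e.\ $\calB \subseteq \calC$ implies $\overline{\calB} \subseteq \overline{\calC}$; and completion is idempotent, i.e.\ $\overline{\overline{\calB}} = \overline{\calB}$. Together with the trivial remark that a field always sits inside its own completion (take $B = C = A$), these suffice. Throughout, $\alpha(\cdot)$ is generated inside $\calP(X)$, $\calN$ is a Boolean ideal of $\calP(X)$, and $\mu$ extends to a charge on $\alpha(\calA \cup \calN)$ by Lemma~\ref{completefield}, so every completion below is taken with respect to one consistent charge.

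The crux is Part~1, which I would prove by a symmetric sandwiching argument. For one inclusion, $\calA^{\prime} \subseteq \alpha(\calA^{\prime} \cup \calN)$ gives $\overline{\calA^{\prime}} \subseteq \overline{\alpha(\calA^{\prime} \cup \calN)}$ by monotonicity, while $\calN \subseteq \alpha(\calA^{\prime} \cup \calN) \subseteq \overline{\alpha(\calA^{\prime} \cup \calN)}$. Thus both generating sets $\overline{\calA^{\prime}}$ and $\calN$ lie inside the field $\overline{\alpha(\calA^{\prime} \cup \calN)}$, whence $\alpha(\overline{\calA^{\prime}} \cup \calN) \subseteq \overline{\alpha(\calA^{\prime} \cup \calN)}$; completing both sides (monotonicity) and using idempotence yields $\overline{\alpha(\overline{\calA^{\prime}} \cup \calN)} \subseteq \overline{\alpha(\calA^{\prime} \cup \calN)}$. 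The reverse inclusion is immediate: $\calA^{\prime} \subseteq \overline{\calA^{\prime}}$ gives $\alpha(\calA^{\prime} \cup \calN) \subseteq \alpha(\overline{\calA^{\prime}} \cup \calN)$, and monotonicity gives $\overline{\alpha(\calA^{\prime} \cup \calN)} \subseteq \overline{\alpha(\overline{\calA^{\prime}} \cup \calN)}$. The two inclusions give the equality.

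Parts~2 and~3 follow quickly. For Part~2, Part~1 lets me rewrite the target identity $\overline{\alpha(\calA^{\prime} \cup \calN)} = \alpha(\overline{\calA^{\prime}} \cup \calN)$ as $\overline{\alpha(\overline{\calA^{\prime}} \cup \calN)} = \alpha(\overline{\calA^{\prime}} \cup \calN)$, which is by definition the assertion that $\alpha(\overline{\calA^{\prime}} \cup \calN)$ is Peano--Jordan complete; both directions fall out at once. For Part~3 I would treat the equivalence and the implication in turn. If $\calN \subseteq \overline{\calA^{\prime}}$, then $\calA^{\prime} \cup \calN \subseteq \overline{\calA^{\prime}}$, so $\alpha(\calA^{\prime} \cup \calN) \subseteq \overline{\calA^{\prime}}$ and hence $\overline{\alpha(\calA^{\prime} \cup \calN)} \subseteq \overline{\calA^{\prime}}$; the opposite inclusion follows from $\calA^{\prime} \subseteq \alpha(\calA^{\prime} \cup \calN)$ and monotonicity, giving $\overline{\alpha(\calA^{\prime} \cup \calN)} = \overline{\calA^{\prime}}$. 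Conversely, that equality forces $\calN \subseteq \alpha(\calA^{\prime} \cup \calN) \subseteq \overline{\alpha(\calA^{\prime} \cup \calN)} = \overline{\calA^{\prime}}$, completing the equivalence. Finally, under $\calN \subseteq \overline{\calA^{\prime}}$ one has $\overline{\calA^{\prime}} \cup \calN = \overline{\calA^{\prime}}$, so $\alpha(\overline{\calA^{\prime}} \cup \calN) = \overline{\calA^{\prime}} = \overline{\alpha(\calA^{\prime} \cup \calN)}$, which is the last identity.

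The only real obstacle is bookkeeping rather than mathematics: one must check that monotonicity and idempotence apply uniformly across the nested fields $\calA^{\prime}$, $\overline{\calA^{\prime}}$, $\alpha(\calA^{\prime} \cup \calN)$ and $\alpha(\overline{\calA^{\prime}} \cup \calN)$ under a single charge, which is guaranteed by the unique extension of $\mu$ in Lemma~\ref{completefield} together with the agreement of outer charges noted after Definition~\ref{PJ_completion}. Notably, the explicit description of $\alpha(\calB \cup \calM)$ from Lemma~\ref{fieldplusnull} is not needed.
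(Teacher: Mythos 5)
Your proposal is correct and follows essentially the same route as the paper: both establish Part 1 by the two-inclusion argument (monotonicity from $\calA^{\prime} \subseteq \overline{\calA^{\prime}}$ in one direction; in the other, noting that $\overline{\calA^{\prime}}$ and $\calN$ both sit inside the field $\overline{\alpha(\calA^{\prime} \cup \calN)}$ and then invoking idempotence), and both reduce Parts 2 and 3 to Part 1 by the same rewritings. The only differences are cosmetic — the paper is terser, and in the forward direction of Part 3 it routes through Part 1 where you sandwich directly — so nothing of substance separates the two proofs.
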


\begin{proof}
For 1, note $\overline{\alpha(\calA^{\prime} \cup \calN)} \subseteq \overline{\alpha(\overline{\calA^{\prime}} \cup \calN)}$ because $\calA^{\prime} \subseteq \overline{\calA^{\prime}}$. But also $\overline{\calA^{\prime}} \subseteq \overline{\alpha(\calA^{\prime} \cup \calN)}$ and $\calN \subseteq \overline{\alpha(\calA^{\prime} \cup \calN)}$, so $\overline{\alpha(\overline{\calA^{\prime}} \cup \calN)} \subseteq \overline{\alpha(\calA^{\prime} \cup \calN)}$.

For 2, note 1 implies $\overline{\alpha(\calA^{\prime} \cup \calN)} = \alpha(\overline{\calA^{\prime}} \cup \calN)$ if and only if $\overline{\alpha(\overline{\calA^{\prime}} \cup \calN)} = \alpha(\overline{\calA^{\prime}} \cup \calN)$.

For 3, note $\calN \subset \overline{\calA^{\prime}}$ combined with 1 gives $\overline{\alpha(\calA^{\prime} \cup \calN)} = \overline{\alpha(\overline{\calA^{\prime}} \cup \calN)} = \overline{\calA^{\prime}}$. The converse follows because $\calN \subset \overline{\alpha(\calA^{\prime} \cup \calN)}$. The last implication follows because if $\overline{\alpha(\calA^{\prime} \cup \calN)} = \overline{\calA^{\prime}}$, then $\alpha(\overline{\calA^{\prime}} \cup \calN) = \alpha(\overline{\alpha(\calA^{\prime} \cup \calN)} \cup \calN) = \overline{\alpha(\calA^{\prime} \cup \calN)}$.
\qed \end{proof}

The main reason for performing null modification operations on chains is that real-valued functions on a space $X$ generate chains of inverse images, and thus null modification of those chains can be used to construct functions with desired properties. However, the missing ingredient is a way to generate a function from a chain. The following lemma identifies a class of chains that can be placed in one-to-one correspondence with positive real-valued functions. 

\begin{lemma} \label{function_chain_equivalence}
Consider a set $X$ and a chain $\calT \subseteq \calP(X)$ of the form $\calT := \{ A_y : y \in (0,\infty) \}$. Then the following statements are logically equivalent.
\begin{enumerate}
\item There is a function $f : X \rightarrow [0,\infty)$ such that $A_y = f^{-1}(y,\infty)$ for each $y \in (0,\infty)$.
\item $\calT$ satisfies the following conditions:
\begin{enumerate}
\item $A_y = \bigcup \{ A_z \in \calT : z > y \}$ for each $y \in (0,\infty)$, and
\item $\bigcap \calT = \emptyset$, 
\end{enumerate}
\end{enumerate}
\end{lemma}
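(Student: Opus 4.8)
The plan is to prove the two implications separately: the forward direction $(1 \implies 2)$ is a routine verification, while the reverse direction $(2 \implies 1)$ requires constructing $f$ explicitly from the chain and then checking it has the required inverse images.

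For $(1 \implies 2)$, I would assume $f : X \to [0,\infty)$ satisfies $A_y = f^{-1}(y,\infty)$ for all $y > 0$ and verify the two conditions directly. For condition (a), the inclusion $\bigcup \{ A_z : z > y \} \subseteq A_y$ is immediate, since $f(x) > z > y$ forces $f(x) > y$; conversely, if $f(x) > y$ then the density of $\R$ supplies some $z$ with $y < z < f(x)$, so $x \in A_z$ with $z > y$. Condition (b) follows because $f(x)$ is a finite real number for every $x$, so choosing any $y > f(x)$ shows $x \notin A_y$, whence no point lies in $\bigcap \calT$.

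For $(2 \implies 1)$, I would define $f(x) := \sup \{ y \in (0,\infty) : x \in A_y \}$, using the convention $\sup \emptyset = 0$ so that a point lying in no $A_y$ is sent to $0$. The first observation is that condition (a) forces the family to be non-increasing in its index, that is $z > y \implies A_z \subseteq A_y$, because $A_z$ appears in the union defining $A_y$. The most delicate step is confirming that $f$ is genuinely $[0,\infty)$-valued rather than attaining $+\infty$, and this is exactly where condition (b) is needed: by the monotonicity just noted, if $x \in A_y$ for arbitrarily large $y$, then $x \in A_{y_0}$ for every $y_0 > 0$ (since $A_y \subseteq A_{y_0}$ whenever $y > y_0$), so $x \in \bigcap \calT$, contradicting (b). Hence the set $\{ y : x \in A_y \}$ is bounded above and $f(x) < \infty$.

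Finally I would verify $A_y = f^{-1}(y,\infty)$ for each $y > 0$. For $A_y \subseteq f^{-1}(y,\infty)$, condition (a) gives $x \in A_z$ for some $z > y$, so $f(x) \geq z > y$; for the reverse, $f(x) > y$ yields by definition of the supremum some $w > y$ with $x \in A_w$, and monotonicity ($A_w \subseteq A_y$) then places $x$ in $A_y$. I expect the main obstacle to be the bookkeeping of strict versus non-strict inequalities when passing between membership in the $A_y$ and the value of the supremum; it is precisely condition (a), rather than mere monotonicity of the chain, together with the density of $\R$, that makes the strict inequalities come out correctly in both directions.
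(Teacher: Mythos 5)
Your proof is correct and takes essentially the same approach as the paper: the paper defines $f(x) := \inf\{z \in (0,\infty) : x \notin A_z\}$, which—given the monotonicity $z > y \implies A_z \subseteq A_y$ that you derive from condition (a)—is exactly your $\sup\{y \in (0,\infty) : x \in A_y\}$ with the same convention at points lying in no $A_y$. Both verifications use condition (a) to get the strict inequalities in $A_y = f^{-1}(y,\infty)$ and condition (b) to guarantee $f$ is finite (equivalently, that the paper's infimum is over a non-empty set), so there is no gap.
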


\begin{proof}
$(\implies)$ Statement~2a follows from the fact that for any $y \in (0,\infty)$,
\begin{eqnarray*}
x \in A_y &\iff& f(x) > y \\
&\iff& f(x) > z \mbox{ for some } z > y \\
&\iff& x \in A_z \mbox{ for some } z > y \\
&\iff& x \in \bigcup \{ A_z \in \calT : z > y \}
\end{eqnarray*}
Statement~2b holds because for any $x \in X$, there exists $y > f(x)$, hence $x \notin A_y$.

$(\impliedby)$ Define 
\[
f(x) := \inf \{ z \in (0,\infty) : x \notin A_z \},
\] 
noting the set on the right of this expression is not empty, since $\bigcap \calT = \emptyset$ (that is, there is no $x \in X$ with $x \in A_y$ for all $y \in (0,\infty)$). Then
\begin{eqnarray*}
x \in f^{-1}(y,\infty) & \iff & f(x) > y \\
& \iff & y < \inf \{ z \in (0,\infty) : x \notin A_z \} \\
& \iff & \exists z \in (y,\infty) \mbox{ with } x \in A_z \\
& \iff & x \in \bigcup \{ A_z \in \calT : z > y \} \\
& \iff & x \in A_y
\end{eqnarray*}
for each $y \in (0,\infty)$, giving $A_y = f^{-1}(y,\infty)$. 
\qed \end{proof}

Consequently, one can use null modification to transform $T_1$-measurable or integrable functions on the completion of a charge space to functions on the charge space itself, as described in the following lemma. Thus under the conditions of the lemma, completing the charge space has not added any new equivalence classes of functions.

\begin{lemma} \label{nullmodification_Xfn}
Suppose $(X,\calA,\mu)$ is a Peano-Jordan complete charge space with null sets $\calN$ and $\calA^{\prime}$ is a sub-field of $\calA$ such that $\overline{\alpha(\calA^{\prime} \cup \calN)} = \alpha(\overline{\calA^{\prime}} \cup \calN)$. Then for every $f \in L_0(X,\alpha(\calA^{\prime} \cup \calN),\mu)$, there exists $h \in L_0(X,\calA^{\prime},\mu)$ such that $f = h$ a.e. (with respect to $\alpha(\calA^{\prime} \cup \calN)$). Moreover, if $f \in L_p(X,\alpha(\calA^{\prime} \cup \calN),\mu)$ for some $p \in [1,\infty)$, then $h \in L_p(X,\calA^{\prime},\mu)$.
\end{lemma}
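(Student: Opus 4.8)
The plan is to convert $f$ into its chain of inverse images, transport that chain into $\overline{\calA^{\prime}}$ using the null modification of Lemma~\ref{nullmodification_X}, rebuild a function $h$ from the modified chain via Lemma~\ref{function_chain_equivalence}, and then verify measurability and integrability of $h$ \emph{with respect to $\calA^{\prime}$}. Since $f = f^+ - f^-$ and both $L_0(X,\calA^{\prime},\mu)$ and $L_p(X,\calA^{\prime},\mu)$ are closed under differences (Theorem~\ref{T1_properties}, Theorem~\ref{Lp_vector_space}, Lemma~\ref{Lp_pos_neg_parts}), it suffices to treat $f \geq 0$ and produce a corresponding $h \geq 0$; the general case follows by setting $h := h_1 - h_2$ for the functions obtained from $f^+$ and $f^-$.

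First I would apply Theorem~\ref{T1measurable_characterisation} to $f \in L_0(X,\alpha(\calA^{\prime}\cup\calN),\mu)$ to obtain a countable $C \subset (0,\infty)$ with $f^{-1}(y,\infty) \in \overline{\alpha(\calA^{\prime}\cup\calN)}$ for $y \notin C$, so that $\calT := \{ f^{-1}(y,\infty) : y \in (0,\infty)\setminus C \}$ is a chain in $\overline{\alpha(\calA^{\prime}\cup\calN)}$. Lemma~\ref{nullmodification_X} then yields $\phi : \calT \to \overline{\calA^{\prime}}$ with $\phi(A) \triangle A \in \calN$ and $[A]_{\calA/\calN} \leq [B]_{\calA/\calN} \iff \phi(A) \subseteq \phi(B)$. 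I set $A_y := \bigcup \{ \phi(f^{-1}(z,\infty)) : z > y,\ z \notin C \}$, put $N_\infty := \bigcap_{y>0} A_y$ and $A_y^{\prime} := A_y \setminus N_\infty$. Since $N_\infty \subseteq A_y$ with $\overline{\mu}(A_y) = \mu^*(f^{-1}(y,\infty)) \to 0$ and $\mu^*_{\calA^{\prime}} = \overline{\mu}$ on $\overline{\calA^{\prime}}$, the set $N_\infty$ is $\calA^{\prime}$-null and hence lies in $\overline{\calA^{\prime}}$, so each $A_y^{\prime} \in \overline{\calA^{\prime}}$. The family $\{A_y^{\prime}\}$ is right-continuous with empty intersection, so Lemma~\ref{function_chain_equivalence} gives $h : X \to [0,\infty)$ with $h^{-1}(y,\infty) = A_y^{\prime}$.

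For $h = f$ a.e.\ and $h \in L_0(X,\calA^{\prime},\mu)$: order preservation gives $A_y \subseteq \phi(f^{-1}(y,\infty))$, and the continuity $\mu^*(f^{-1}(z,\infty)) \to \mu^*(f^{-1}(y,\infty))$ as $z \downarrow y$ (valid because $y \notin C$, by Lemma~\ref{countableexceptions}) forces equal $\overline{\mu}$-measure, so $A_y = \phi(f^{-1}(y,\infty))$ up to a null set. Combining this with $\phi(f^{-1}(y,\infty)) \triangle f^{-1}(y,\infty) \in \calN$ and the nullity of $N_\infty$ gives $\mu^*(h^{-1}(y,\infty) \triangle f^{-1}(y,\infty)) = 0$ for all $y \notin C$, whence $h = f$ a.e.\ by Theorem~\ref{equalityae_characterisation}(3). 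Moreover $h^{-1}(y,\infty) = A_y^{\prime} \in \overline{\calA^{\prime}}$ for $y \notin C$, and $\mu^*_{\calA^{\prime}}(h^{-1}(y,\infty)) = \overline{\mu}(A_y) \to 0$ shows $h$ is smooth, so Theorem~\ref{T1measurable_characterisation} yields $h \in L_0(X,\calA^{\prime},\mu)$.

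The main obstacle is the $L_p$ claim, because integrability with respect to the larger field $\alpha(\calA^{\prime}\cup\calN)$ does not automatically descend to the subfield $\calA^{\prime}$. I would handle it through the chain characterisation of integrability. Setting $g := h^p \geq 0$, Theorem~\ref{T1_properties} gives $g \in L_0(X,\calA^{\prime},\mu)$, and Theorem~\ref{T1measurable_characterisation}(3) supplies canonical simple approximants $g_n$ (with respect to $\overline{\calA^{\prime}}$) built from thresholds $y_{n,j}$ chosen to avoid the countable exception sets of both $g$ and $f^p$. Because $h^{-1}(y,\infty)$ and $f^{-1}(y,\infty)$ differ only by null sets and $\overline{\mu}$ is insensitive to null sets, each $g_n$ has exactly the same integrals $\int |g_m - g_n|\,d\mu$ as the corresponding approximants of $f^p$ computed in the larger field. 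Since $f \in L_p(X,\alpha(\calA^{\prime}\cup\calN),\mu)$ gives $f^p \in L_1(X,\alpha(\calA^{\prime}\cup\calN),\mu)$, Theorem~\ref{integrable_characterisation} makes the latter sequence Cauchy, hence so is $\{g_n\}$, and the same theorem returns $g = h^p \in L_1(X,\calA^{\prime},\mu)$, i.e.\ $h \in L_p(X,\calA^{\prime},\mu)$. Recombining $f^+$ and $f^-$ via Lemma~\ref{Lp_pos_neg_parts} and Theorem~\ref{Lp_vector_space} finishes the argument.
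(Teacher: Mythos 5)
Your construction of $h$ is essentially the paper's own: the same reduction to $f \geq 0$, the same chain from Theorem~\ref{T1measurable_characterisation}, the same transport into $\overline{\calA^{\prime}}$ by Lemma~\ref{nullmodification_X}, the same removal of a null intersection so that Lemma~\ref{function_chain_equivalence} applies, and the same appeal to Theorem~\ref{equalityae_characterisation}(3) and smoothness to get $f = h$ a.e.\ and $h \in L_0(X,\calA^{\prime},\mu)$. Your definition $A_y := \bigcup \{ \phi(f^{-1}(z,\infty)) : z > y,\ z \notin C \}$ is in fact tidier than the paper's $C_y := \bigcup \calS_y \setminus \bigcap \calS$, since right-continuity (condition~2a of Lemma~\ref{function_chain_equivalence}) holds for every $y>0$ by construction rather than needing to be checked. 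One step needs tightening: in your second paragraph you deduce $A_y^{\prime} \in \overline{\calA^{\prime}}$ from the nullity of $N_\infty$ alone, but $A_y$ is an uncountable union of members of $\overline{\calA^{\prime}}$, so its membership is not yet known at that point. The correct justification is the sandwich you write down later for a different purpose: $\phi(f^{-1}(z,\infty)) \subseteq A_y \subseteq \phi(f^{-1}(y,\infty))$ with $\overline{\mu}$-difference tending to zero as $z \downarrow y$ (Lemma~\ref{countableexceptions}), whence $A_y \in \overline{\overline{\calA^{\prime}}} = \overline{\calA^{\prime}}$ by idempotence of Peano-Jordan completion. This is exactly how the paper proves its Property~1, and it is also what makes the relevant exceptional sets $\calA^{\prime}$-null rather than merely $\calA$-null --- a distinction that matters throughout, since $\calA$-null sets need not be $\calA^{\prime}$-null.

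Where you genuinely depart from the paper is the $L_p$ claim. The paper verifies condition~2 of Theorem~\ref{integrable_characterisation} for $h$ with respect to $\calA^{\prime}$: it takes the tail bound $\int |f|^p I_{|f|^{-1}(y,\infty)} d\mu < \epsilon$ and transfers it to $h$ via a.e.\ equality (Theorems~\ref{L1_integrable_properties}(4) and~\ref{equalityae_characterisation}(3c)) together with Lemma~\ref{nested_fields}(4). You instead verify condition~3: you form the canonical simple approximants of $h^p$ over $\overline{\calA^{\prime}}$, with thresholds chosen outside all the countable exceptional sets, and transfer the Cauchy property from the approximants of $f^p$ term by term, using that corresponding threshold sets differ by null sets of equal charge. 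Both work, but your route buys something concrete: your approximants are simple over $\overline{\calA^{\prime}}$ and hence automatically $\calA^{\prime}$-integrable, so you never need to know in advance that the tail function $|h|^p I_{|h|^{-1}(y,\infty)}$ is integrable with respect to $\calA^{\prime}$ --- a membership that the paper's appeal to Lemma~\ref{nested_fields}(4) (whose hypothesis is precisely $\calA^{\prime}$-integrability) quietly presupposes. The cost is extra bookkeeping, which you should make fully explicit: the thresholds must also avoid the countable set of $y$ for which $h^{-1}(y,\infty) \triangle f^{-1}(y,\infty)$ is not known to be null (Theorem~\ref{equalityae_characterisation}(3b)), so that the term-by-term matching of charges is legitimate.
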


\begin{proof}
It will be sufficient to prove the theorem assuming $f : X \rightarrow \R^+$, since if the result holds for $f^+$ and $f^-$, it holds for $f$ by Lemma~\ref{T1_properties}.

By Theorem~\ref{T1measurable_characterisation}, there exists a countable set $C \subset (0,\infty)$ such that for $y \in (0,\infty) \setminus C$, $f^{-1}(y,\infty) \in \overline{\alpha(\calA^{\prime} \cup \calN)}$. Define
\begin{align*}
A_y &:= f^{-1}(y, \infty) \mbox{ for each } y \in (0,\infty) \setminus C, \mbox{ and }\\
\calT &:= \{ A_y : y \in (0,\infty) \setminus C \}. 
\end{align*}

Let $\phi : \calT \rightarrow \overline{\calA^{\prime}}$ be the order preserving map asserted in Lemma~\ref{nullmodification_X}, as it applies to $\calT$. Define
\begin{align*}
B_y &:= \phi(A_y) \mbox{ for each } y \in (0,\infty) \setminus C, \\
\calS &:= \{ B_y : y \in (0,\infty) \setminus C \}, \mbox{ and } \\
\calS_y &:= \{ B \in \calS : B \subset B_y \} \mbox{ for each } y \in (0,\infty).
\end{align*}
Then, for each $y \in (0,\infty) \setminus C$, $B_y \triangle A_y \in \calN$. Moreover, $\calS \subseteq \overline{\calA^{\prime}}$. 

Before constructing $h$, the elements of $\calS$ must be further modified to satisfy the conditions of Statement~2 of Lemma~\ref{function_chain_equivalence}, as follows. Define:
\begin{align*}
C_y &:= \bigcup \calS_y \setminus \bigcap \calS \mbox{ for each } y \in (0,\infty),  \\
\calR &:= \{ C_y : y \in (0,\infty) \}, \mbox{ and } \\
\calR_y &:= \{ C \in \calR : C \subset C_y \} \mbox{ for each } y \in (0,\infty).
\end{align*}
Then the following properties hold
\begin{enumerate}
\item $C_y \in \overline{\calA^{\prime}}$ for each $y \in (0,\infty) \setminus C$,
\item $C_y = \bigcup \calR_y$ for each $y \in (0,\infty)$,
\item $\bigcap \calR = \emptyset$, and
\item $C_y \triangle A_y \in \calN$ for each $y \in (0,\infty) \setminus C$.
\end{enumerate}

To see 1, note that since $f$ is smooth, for any $\epsilon > 0$ there is $y \in (0,\infty) \setminus C$ such that $\mu(B_y) = \mu(A_y) < \epsilon$. But $\bigcap \calS \subseteq B_y \in \overline{\calA}$ for all $y \in (0,\infty) \setminus C$, hence $\bigcap \calS \in \overline{\calA^{\prime}}$ with $\mu(\bigcap \calS) = 0$. By Lemma~\ref{countableexceptions}(2), for any $y \in (0,\infty) \setminus C$ there exist $y_1, y_2 \in (0,\infty) \setminus C$ such that $A_{y_1} \subset A_{y} \subset A_{y_2}$ and $\mu(A_{y_2} \setminus A_{y_1}) < \epsilon$. But then $B_{y_1}, B_{y_2} \in \overline{\calA^{\prime}}$ with $B_{y_1} \subseteq \bigcup \calS_y \subseteq B_{y_2}$ and $\mu(B_{y_2} \setminus B_{y_1}) < \epsilon$. Hence $\bigcup \calS_y \in \overline{\calA^{\prime}}$, and $C_y \in \overline{\calA^{\prime}}$.

Properties~2 and~3 follow immediately from the definition of $C_y$. 

To see 4, note for each $y \in (0,\infty) \setminus C$ that $C_y \triangle (\bigcup \calS_y) \in \calN$ since $\mu(\bigcap \calS) = 0$. Also note $(\bigcup \calS_y) \triangle B_y = B_y \setminus \bigcup \calS_y \in \calN$, since 
\[
\mu(B_y \setminus \bigcup \calS_y) \leq \mu(B_{y_2} \setminus B_{y_1}) < \epsilon
\]
for any $\epsilon > 0$, where $y_1, y_2$ are as defined in the proof of~1. Finally, recall $B_y \triangle A_y \in \calN$ to obtain the result.

By Lemma~\ref{function_chain_equivalence}, there is a function $h : X \rightarrow \R$ such that $h^{-1}(y,\infty) = C_y$ for all $y \in \R$. Thus $ h^{-1}(y,\infty) \in \overline{\calA^{\prime}}$ for each $y \in \R \setminus C$. By Lemma~\ref{equalityae_characterisation}(2), $h = f$ a.e. By Theorem~\ref{T1measurable_characterisation}, $h \in L_0(\N, \calA^{\prime}, \nu)$, remembering that $f$ is smooth, and hence so is $h$.  

Finally, suppose $f \in L_p(X,\alpha(\calA^{\prime} \cup \calN),\mu)$ for some $p \in [1,\infty)$. Then for any $\epsilon > 0$, there is $y \in (0, \infty)$ such that $\int | f |^p I_{| f |^{-1}(y, \infty)} d\mu_{\alpha(\calA^{\prime} \cup \calN)} < \epsilon$ by Theorem~\ref{integrable_characterisation}. It follows that
\begin{eqnarray*}
\int |h|^p I_{| h |^{-1}(y, \infty)} d \mu_{\calA^{\prime}} &=& \int |h|^p I_{| h |^{-1}(y, \infty)} d \mu_{\alpha(\calA^{\prime} \cup \calN)} \\
&=& \int |f|^p I_{| f |^{-1}(y, \infty)} d \mu_{\alpha(\calA^{\prime} \cup \calN)} \\
&<& \epsilon,
\end{eqnarray*}
where the first equality follows by Lemma~\ref{nested_fields}(4) and the second by Theorems~\ref{L1_integrable_properties}(4) and~\ref{equalityae_characterisation}(3c). Hence $h \in L_p(X,\calA^{\prime},\mu)$, by Theorem~\ref{integrable_characterisation}.
\qed \end{proof}

It is not in general true that $L_0(X,\alpha(\calA^{\prime} \cup \calN),\mu) = L_0(X,\calA^{\prime},\mu)$, as may be understood by the following example. Take any $A \in \calF$ with $0 < \nu(A) < 1$, and form the $\sigma$-field $\calA := \{ \emptyset, A, A^c, \N \}$. Let $B' \in \calN$ be any null set contained in $A^c$, and let $B = A \cup B'$. Then $I_B \in L_0(\N, \alpha(\calA \cup \calN), \nu)$ but $I_B \not\in L_0(\N, \calA, \nu)$. However, the following theorem identifies necessary and sufficient conditions to have $L_p(X,\alpha(\calA^{\prime} \cup \calN),\mu) = L_p(X,\calA^{\prime},\mu)$ and necessary and sufficient conditions to have $\calL_p(X,\alpha(\calA^{\prime} \cup \calN),\mu) \cong \calL_p(X,\calA^{\prime},\mu)$. Here the symbol `$\cong$' is used to represent an isometric vector space isomorphism. 

\begin{theorem} \label{null_sets_add_no_functions}
Suppose $(X,\calA,\mu)$ is a Peano-Jordan complete charge space with null sets $\calN$ and $\calA^{\prime}$ is a sub-field of $\calA$.  Let $p \in \{0\} \cup [1,\infty)$. Then $L_p(X,\calA^{\prime},\mu)$ is a dense subspace of $L_p(X,\alpha(\calA^{\prime} \cup \calN),\mu)$. Moreover, the following statements hold.
\begin{enumerate}
\item $L_p(X,\calA^{\prime},\mu) = L_p(X,\alpha(\calA^{\prime} \cup \calN),\mu)$ if and only if $\calN \subset \overline{\calA^{\prime}}$.
\item The following statements are logically equivalent:
\begin{enumerate}
\item $\calL_p(X,\calA^{\prime},\mu) \cong \calL_p(X,\alpha(\calA^{\prime} \cup \calN),\mu)$ with the isomorphism given by $[f]_{\calA^{\prime}} \mapsto [f]_{\alpha(\calA^{\prime} \cup \calN)}$.
\item $\overline{\alpha(\calA^{\prime} \cup \calN)} = \alpha(\overline{\calA^{\prime}} \cup \calN)$.
\end{enumerate}
\end{enumerate}
\end{theorem}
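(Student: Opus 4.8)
The plan is to treat the density statement and the two numbered claims separately, after recording two facts I would use throughout. First, since $(X,\calA,\mu)$ is Peano-Jordan complete it contains $\calN$, so $\calA^{\prime}\subseteq\alpha(\calA^{\prime}\cup\calN)\subseteq\calA$; moreover the null sets of $\alpha(\calA^{\prime}\cup\calN)$ are exactly $\calN$, because the nesting of fields gives $\mu^*_{\calA^{\prime}}\geq\mu^*_{\alpha(\calA^{\prime}\cup\calN)}\geq\mu^*_{\calA}$ (Lemma~\ref{nested_fields}(6)) while each $N\in\calN$ lies in $\alpha(\calA^{\prime}\cup\calN)$ with $\mu(N)=0$. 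Second, on the subspace $L_p(X,\calA^{\prime},\mu)\subseteq L_p(X,\alpha(\calA^{\prime}\cup\calN),\mu)$ (Lemma~\ref{nested_fields}(5)) the two spaces induce the same norm and pseudo-metric, by Lemma~\ref{nested_fields}(4) for $p\in[1,\infty)$ and Corollary~\ref{nested_fields_2} for $p=0$. For density I would invoke Theorem~\ref{simple_dense}: given a simple function $s=\sum_k c_k I_{A_k}$ with $A_k\in\alpha(\calA^{\prime}\cup\calN)$, Lemma~\ref{fieldplusnull} supplies $B_k\in\calA^{\prime}$ with $A_k\triangle B_k\in\calN$, and $s^{\prime}:=\sum_k c_k I_{B_k}\in L_p(X,\calA^{\prime},\mu)$ differs from $s$ only on $\bigcup_k(A_k\triangle B_k)\in\calN$, hence $s=s^{\prime}$ a.e.\ and the two are at distance zero.

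For statement~1, the ($\Leftarrow$) direction follows from Lemma~\ref{completion_identities}(3), which turns $\calN\subset\overline{\calA^{\prime}}$ into $\overline{\alpha(\calA^{\prime}\cup\calN)}=\overline{\calA^{\prime}}$; then Theorem~\ref{prop_1.8_basile2000}(3,4) gives $L_p(X,\alpha(\calA^{\prime}\cup\calN),\mu)=L_p(X,\overline{\alpha(\calA^{\prime}\cup\calN)},\overline{\mu})=L_p(X,\overline{\calA^{\prime}},\overline{\mu})=L_p(X,\calA^{\prime},\mu)$. For ($\Rightarrow$), take $N\in\calN$; then $I_N$ is simple with respect to $\alpha(\calA^{\prime}\cup\calN)$, so $I_N\in L_p(X,\alpha(\calA^{\prime}\cup\calN),\mu)=L_p(X,\calA^{\prime},\mu)\subseteq L_0(X,\calA^{\prime},\mu)$, and Theorem~\ref{prop_1.8_basile2000}(2) yields $N\in\overline{\calA^{\prime}}$.

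For statement~2 I would first show, unconditionally, that $\iota:[f]_{\calA^{\prime}}\mapsto[f]_{\alpha(\calA^{\prime}\cup\calN)}$ is a well-defined linear distance-preserving injection: well-definedness holds because every $\calA^{\prime}$-null set is $\alpha(\calA^{\prime}\cup\calN)$-null, the isometry is the coincidence of norms/pseudo-metrics noted above, and injectivity follows from Theorem~\ref{Lp_equivalence_classes}. Thus (a) is equivalent to surjectivity of $\iota$, i.e.\ to the assertion that every $f\in L_p(X,\alpha(\calA^{\prime}\cup\calN),\mu)$ agrees a.e.\ with some $h\in L_p(X,\calA^{\prime},\mu)$. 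The implication (b)$\implies$(a) is then precisely Lemma~\ref{nullmodification_Xfn}. For (a)$\implies$(b), the inclusion $\alpha(\overline{\calA^{\prime}}\cup\calN)\subseteq\overline{\alpha(\calA^{\prime}\cup\calN)}$ always holds (both $\overline{\calA^{\prime}}$ and $\calN$ sit inside the field $\overline{\alpha(\calA^{\prime}\cup\calN)}$), so it remains to prove the reverse inclusion. Given $A\in\overline{\alpha(\calA^{\prime}\cup\calN)}$, Theorem~\ref{prop_1.8_basile2000}(2) together with boundedness (via Theorem~\ref{dominated_integrability}) places $I_A$ in $L_p(X,\alpha(\calA^{\prime}\cup\calN),\mu)$; surjectivity produces $h\in L_p(X,\calA^{\prime},\mu)$ with $h=I_A$ a.e. By Theorem~\ref{T1measurable_characterisation} I can pick $y\in(0,1)$ off the countable exceptional set with $B:=h^{-1}(y,\infty)\in\overline{\calA^{\prime}}$, and since $h=I_A$ off a set in $\calN$ one checks $A\triangle B\subseteq\{x:h(x)\neq I_A(x)\}\in\calN$, whence Lemma~\ref{fieldplusnull} gives $A\in\alpha(\overline{\calA^{\prime}}\cup\calN)$.

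The main obstacle is the direction (a)$\implies$(b): recovering a field identity from a bare isomorphism of quotient spaces. The workable device is to push indicator functions $I_A$ of sets $A\in\overline{\alpha(\calA^{\prime}\cup\calN)}$ through the isomorphism and extract a genuine $\overline{\calA^{\prime}}$-representative of $A$ as a level set of the resulting $\calA^{\prime}$-measurable preimage. The delicate bookkeeping is keeping the three outer charges $\mu^*_{\calA^{\prime}}$, $\mu^*_{\alpha(\calA^{\prime}\cup\calN)}$ and $\mu^*_{\calA}$ distinct while using that they share the common ideal of null sets $\calN$; the opposite implication (b)$\implies$(a) is comparatively routine, being delegated to Lemma~\ref{nullmodification_Xfn}.
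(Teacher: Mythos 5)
Your overall architecture matches the paper's proof: density via Theorem~\ref{simple_dense} and Lemma~\ref{fieldplusnull}; Statement~1 via Lemma~\ref{completion_identities} and Theorem~\ref{prop_1.8_basile2000} (your direct verification that $\calN \subset \overline{\calA^{\prime}}$ by pushing $I_N$ through the assumed equality is a harmless, slightly more direct variant of the paper's argument); and Statement~2 via Lemma~\ref{nullmodification_Xfn} plus the isometry facts for (b)$\implies$(a). But there is one genuinely broken step, in your proof of (a)$\implies$(b). You assert that since $h = I_A$ a.e.\ (with respect to $\alpha(\calA^{\prime}\cup\calN)$), ``$h = I_A$ off a set in $\calN$,'' i.e.\ $\{x : h(x) \neq I_A(x)\} \in \calN$. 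In a charge space this inference is false: $h = I_A$ a.e.\ means only that $h - I_A$ is a null function, i.e.\ $\mu^*(\{x : |h(x)-I_A(x)| > \epsilon\}) = 0$ for every $\epsilon > 0$; it does not imply $\mu^*(\{x : h(x) \neq I_A(x)\}) = 0$. This is exactly the pitfall the paper flags after Definition~\ref{equal_ae} with the example $f(n) = 1/n$, $g(n) = 0$ on the field of finite and cofinite subsets of $\N$: there $f = g$ a.e.\ while $\{x : f(x) \neq g(x)\} = \N$ has outer charge one. (Your density argument is unaffected, because there you pass in the safe direction, from pointwise equality off a null set to a.e.\ equality.)

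The flaw is local and repairable with tools you already have in hand. Because you chose $y \in (0,1)$, any $x \in A \triangle B$ (with $B := h^{-1}(y,\infty)$) satisfies $|h(x) - I_A(x)| \geq \min\{y, 1-y\} > 0$: if $x \in A \setminus B$ then $I_A(x)=1$ and $h(x) \leq y$, while if $x \in B \setminus A$ then $I_A(x) = 0$ and $h(x) > y$. Hence $A \triangle B \subseteq \{x : |h(x)-I_A(x)| > \tfrac{1}{2}\min\{y,1-y\}\}$, and the latter set lies in $\calN$ precisely because $h - I_A$ is a null function. With that substitution your appeal to Lemma~\ref{fieldplusnull} goes through unchanged. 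Alternatively, invoke Theorem~\ref{equalityae_characterisation}(3) as the paper does: it is designed exactly to convert a.e.\ equality into nullity of the symmetric differences $h^{-1}(y,\infty) \triangle I_A^{-1}(y,\infty)$ for all $y$ outside a countable set, which is the statement you actually need.
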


\begin{proof}
Lemma~\ref{fieldplusnull} gives that for every $A \in \alpha(\calA^{\prime} \cup \calN)$ there is $B \in \calA^{\prime}$ such that $A \triangle B \in \calN$. It follows that $I_A = I_B$ a.e. with respect to $\alpha(\calA^{\prime} \cup \calN)$, and hence that for every simple function $s$ with respect to $\alpha(\calA^{\prime} \cup \calN)$, there is a simple function $s^{\prime}$ with respect to $\calA^{\prime}$, such that $\| s - s^{\prime} \|_p = 0$ for $p \in [1,\infty)$ or $d_{\alpha(\calA^{\prime} \cup \calN)}(s,s^{\prime}) = 0$ for $p=0$. Simple functions with respect to $\alpha(\calA^{\prime} \cup \calN)$ are dense in $L_p(X,\alpha(\calA^{\prime} \cup \calN),\mu)$ by Theorem~\ref{simple_dense}, hence so are simple functions with respect to $\calA^{\prime}$, and $L_p(X,\calA^{\prime},\mu)$ is dense in $L_p(X,\alpha(\calA^{\prime} \cup \calN),\mu)$.

To show~1, it will be sufficient to show $L_p(X,\alpha(\calA^{\prime} \cup \calN),\mu) = L_p(X,\calA^{\prime},\mu)$ if and only if $\overline{\alpha(\calA^{\prime} \cup \calN)} = \overline{\calA^{\prime}}$, by Lemma~\ref{completion_identities}. First suppose $\overline{\alpha(\calA^{\prime} \cup \calN)} = \overline{\calA^{\prime}}$. Then Theorem~\ref{prop_1.8_basile2000} gives
\[
L_p(X,\alpha(\calA^{\prime} \cup \calN),\mu) = L_p(X,\overline{\alpha(\calA^{\prime} \cup \calN)},\mu) = L_p(X,\overline{\calA^{\prime}},\mu) = L_p(X,\calA^{\prime},\mu).
\]
Conversely, suppose $L_p(X,\alpha(\calA^{\prime} \cup \calN),\mu) = L_p(X,\calA^{\prime},\mu)$, and consider $A \in \overline{\alpha(\calA^{\prime} \cup \calN)}$. Then $I_A \in L_p(X,\alpha(\calA^{\prime} \cup \calN),\mu)$ by Theorem~\ref{prop_1.8_basile2000}, hence $I_A \in L_p(X,\calA^{\prime},\mu)$. By Theorem~\ref{T1measurable_characterisation}, there is $y \in (0,1)$ such that $A = I_A^{-1}(y,\infty) \in \overline{\calA^{\prime}}$.

For 2, first suppose $\overline{\alpha(\calA^{\prime} \cup \calN)} = \alpha(\overline{\calA^{\prime}} \cup \calN)$. Then Lemma~\ref{nullmodification_Xfn} implies the injective vectorspace homomorphism $[f]_{\calA^{\prime}} \mapsto [f]_{\alpha(\calA^{\prime} \cup \calN)}$ is surjective. This mapping is an isometry by Lemma~\ref{nested_fields} for $p \in [1,\infty)$, and by Corollary~\ref{nested_fields_2} for $p = 0$. Conversely, suppose $\calL_p(X,\alpha(\calA^{\prime} \cup \calN),\mu) \cong \calL_p(X,\calA^{\prime},\mu)$ with the isomorphism given by $[f]_{\calA^{\prime}} \mapsto [f]_{\alpha(\calA^{\prime} \cup \calN)}$, and consider $A \in \overline{\alpha(\calA^{\prime} \cup \calN)}$. Then there exists $f \in L_p(X,\calA^{\prime},\mu)$ such that $f = I_A$ a.e. with respect to $\overline{\alpha(\calA^{\prime} \cup \calN)}$. By Theorems~\ref{T1measurable_characterisation} and~\ref{equalityae_characterisation}, there is $y \in (0,1)$ such that $I_A^{-1}(y,\infty) \triangle f^{-1}(y,\infty) \in \calN$ and $f^{-1}(y,\infty) \in \overline{\calA^{\prime}}$. But $I_A^{-1}(y,\infty) = A$, hence $A \in \alpha(\overline{\calA^{\prime}} \cup \calN)$ by Lemma~\ref{fieldplusnull}. Thus $\overline{\alpha(\calA^{\prime} \cup \calN)} \subseteq \alpha(\overline{\calA^{\prime}} \cup \calN)$. Moreover, $\alpha(\overline{\calA^{\prime}} \cup \calN) \subseteq \overline{\alpha(\calA^{\prime} \cup \calN)}$, since $\overline{\calA^{\prime}} \subseteq \overline{\alpha(\calA^{\prime} \cup \calN)}$ and $\calN \subseteq \overline{\alpha(\calA^{\prime} \cup \calN)}$, giving $\overline{\alpha(\calA^{\prime} \cup \calN)} = \alpha(\overline{\calA^{\prime}} \cup \calN)$.
\qed \end{proof}

The requirement in several of the results in this section that $(X,\calA,\mu)$ is a Peano-Jordan complete charge space, is not as restrictive as may at first appear. By Lemma~\ref{completefield}, one may take any charge space $(X,\calA^{\prime},\mu)$ with null sets $\calN$ and form its completion $(X,\alpha(\calA^{\prime} \cup \calN),\mu)$, where now the domain of $\mu$ is extended to $\alpha(\calA^{\prime} \cup \calN)$. One can then form the charge space $(X,\overline{\alpha(\calA^{\prime} \cup \calN)},\overline{\mu})$ for use as the Peano-Jordan complete charge space in the above results. However, the above results are more general, in that they allow $\calN$ to be larger than the null sets of $(X,\calA^{\prime},\mu)$. 

This section has characterised $\calL_p(X,\calA,\mu)$ spaces for which forming the completion of the underlying charge space adds no new equivalence classes. The value of this is that in such spaces one may effectively assume $\calA$ contains the null sets of $(X,\calA,\mu)$, at least for the purpose of analysing the properties of $\calL_p(X,\calA,\mu)$. 

\bibliography{mybibfile}

\section*{Acknowledgements} 
The author is grateful to Greg Markowsky for proof reading this manuscript, providing many helpful comments, and to the Australian Research Council Centre of Excellence for Mathematical and Statistical Frontiers (CE140100049) for their support.

\end{document}